\documentclass[12pt]{amsart} 

\usepackage{amssymb,amsmath,latexsym,amsthm}
\usepackage[english]{babel}

\newtheorem{defi}{Definition}[section]
\newtheorem{proposition}[defi]{Proposition}
\newtheorem{theorem}[defi]{Theorem}
\newtheorem{conjecture}[defi]{Conjecture}
\newtheorem{remark}[defi]{Remark}
\newtheorem{corollary}[defi]{Corollary}
\newtheorem{lemma}[defi]{Lemma}
\newtheorem{example}[defi]{Example}

\newtheorem*{notation}{Notation}

\newcommand{\N}{{\mathbb{N}}}
\newcommand{\Z}{{\mathbb{Z}}}

\newcommand{\Fq}{{\mathbb{F}_q}}
\newcommand{\kt}{{K\{\tau\}}}
\newcommand{\gl}{{\mathrm{GL}}}

\newcommand{\lan}{{\Lambda_n}}
\newcommand{\rad}{{\mathrm{Rad}}}
\newcommand{\mor}{{\mathrm{Mor}}}
\newcommand{\Hom}{{\mathrm{Hom}}}
\newcommand{\End}{{\mathrm{End}}}
\newcommand{\Id}{{\mathrm{Id}}}
\newcommand{\jac}{{\mathrm{Jac}}}
\newcommand{\pic}{{\mathrm{Pic}}}
\newcommand{\tor}{{\mathrm{Tor}}}
\newcommand{\Card}{{\mathrm{Card}}}
\newcommand{\rank}{{\mathrm{rank}}}

\begin{document}

\title{$q$-Varieties and Drinfeld modules}

\author{Alain Thi\'ery}

\address{Alain Thi\'ery, Institut de Math\'ematiques de Bordeaux,
351 cours de la lib\'eration,
33400 Talence, France}

\email{alain.thiery@math.u-bordeaux.fr}

%\date{\today}

\begin{abstract}
Let $\Fq$ be the finite field with $q$ elements, $K$ be an algebraically closed field 
containing $\Fq$,  $\kt$ be the Ore ring of $\Fq$-linear
polynomials and $\lan$ be a free $\kt$-module of rank $n$.

In a first part, we prove that there is 
a bijection between the set of Zariski closed subsets of $K^n$
which are also $\Fq$-vector spaces, the so-called $q$-varities, and the set of radical
$\kt$-submodules of $\lan$. We also study the dimension of $q$-varieties and their tangent
spaces.

Let $F$ be a $q$-variety, $K\{F\} := \mor(F,K)$ be the set of $\Fq$-linear polynomial maps
from $F$ to $K$. Let $A=\Fq[T]$ and choose $\delta : A \longrightarrow K$ a ring morphism. 
 By definition, an $A$-module structure on $F$ is a ring morphism
$\Phi : A \longrightarrow \End(F)$ such that, for all $a\in A$,
$$d(\Phi_a) = \delta(a)\Id_{T(F)}$$
where $T(F)$ is the tangent space of $F$ and $d(\Phi_a)$ the differential map.
We prove that $K(F) := K(T)\otimes_{K[T]}K\{F\}$ has finite dimension over $K(T)$. 
This dimension is called the rank
of the $A$-module and is denoted by $r(F)$.

We then prove that there exists $c \in A\setminus \{0\}$ such that
for all $a\in A$, prime to $c$,
$$\tor(a,F) := \{x\in F \mid \Phi_a(x) = 0\} = (A/aA)^{r(F)}.$$
\end{abstract}

\maketitle

\section{Introduction}

In his seminal paper \cite{drinfeld}, V.G. Drinfeld defined what is now
called a Drinfeld module. Roughly speaking, it is an action of $A = \Fq[T]$ 
on an algebraically closed field $K$. More precisely, it is a ring morphism
$\Phi : A \longrightarrow \kt$, the Ore ring, such that the first coefficient of
$\Phi_a$ is $a$.

In another important paper \cite{anderson}, G. Anderson defined $T$-modules, which are a generalization
of Drinfeld modules. It is an action of
$A$ on $K^n$ such that the differential of the action of $a$ is just $a\Id_{K^n}+N$ where $N$ is nilpotent. 

In the present paper, we first study subvarieties of $K^n$ which are defined by $\Fq$-linear polynomial
equations. We call them $q$-varieties. The motivation is that we believe that 
$q$-varieties are the natural objects 
on which an action of $A$ can be defined.

In the first paragraphs, we prove a sort of Nullstellenstatz for $q$-varieties. We also define the notions
of morphism, irreducibility, dimension, tangent space for $q$-varieties. There is an obvious analogy with
the classical algebraic geometry, see chapter $1$ of \cite{hartshorne} for example.

Since they are additive algebraic groups, it is well-known, and easy to prove, 
that $q$-varieties are isomorphic
to some $K^r\times F$ where $F$ is a finite $\Fq$-vector space. So the reader can think that these objects are
not really worth studying, but $K$-vector spaces of finite dimension are all isomorphic to some $K^r$ and we
study them in full generality.

In paragraph $6$, we define the $A$-module structure in this context: let $F$ be a $q$-variety, an $A$-module 
structure is a morphism of 
$\Fq$-algebras
$\Phi : A \longrightarrow \End(F)$
such that, for all $a \in A$,
$$d(\Phi_a) = a\Id_{T(F)}$$
where $d(\Phi_a)$ is the differential of $\Phi_a$ and $T(F)$ the tangent space of $F$
(we forget the nilpotent part for simplicity).
Let $K\{F\} := \mor(F,K)$ be the set of $\Fq$-linear polynomial maps
from $F$ to $K$. Then $K\{F\}$ is a $K$-vector space and an $A$-module, so 
it is a $K\otimes_\Fq A =K[T]$-module. 

In \cite{anderson}, the rank of the module is by definition the
rank of $K\{F\}$ as a $K[T]$-module, if free and finitely generated. In this case, 
the module is said to be abelian. But, of course, all modules are not abelian. For example,
the trivial module, i.e. $\phi_a(x) = ax$, is not abelian.

To solve this difficulty, we prove that $K(F) := K(T)\otimes_{K[T]}K\{F\}$ has finite dimension over $K(T)$. 
This dimension is called the rank of the $A$-module and is denoted by $r(F)$. Obviously, in the case of 
abelian modules, our definition of the rank matches Anderson's definition. It is also easy to see
that the trivial module has rank equal to $0$.

In paragraph $7$, we study torsion points and prove the following result:
There exists $c \in A\setminus \{0\}$ such that
for all $a\in A$, prime to $c$,
$$\tor(a,F) := \{x\in F \mid \Phi_a(x) = 0\} = (A/aA)^{r(F)}.$$
This means that $A$-modules are almost regular.

In paragraph $8$, we define an analogue of the Jacobian and the Picard group. In paragraph $9$,
some analogues of Faltings'theorem, Mordell--Lang conjecture or Manin-Mumford conjecture are stated in
this context following L. Denis ideas.

\section{Definitions and first properties}

Let $p$ be a prime number and $K$ an algebraically closed field of characteristic $p$.
Fix $q = p^l$ a power of $p$. The finite subfield of $K$ with $q$ elements will be 
denoted by $\Fq$. 

Let $\tau = X^q$ be the Frobenius polynomial. Note that, for $i \geq 0$, 
$\tau^i := \tau\circ\ldots \circ\tau = X^{q^i}$. The set of $\Fq$-linear polynomials 
$\kt := 
\{P(X) = \sum_{i=0}^d a_iX^{q^i}\mid a_i \in K\}$ is a non-commutative 
unitary ring under composition. We will write $P\{\tau\}$ for $P(X)$.

It is well-know that $\kt$ is left and right euclidean 
(see \cite{goss}, Prop.1.6.2 and Prop.1.6.5). It implies the following lemma
(see \cite{goss}, Prop.5.4.8) :
\begin{lemma}\label{diag} Let $L$ be a $m\times n$ matrix with coefficients in $\kt$,
then there exist matrices $U\in \gl_m(\kt)$ and $V\in \gl_n(\kt)$ such that
$ULV$ is diagonal.
\end{lemma}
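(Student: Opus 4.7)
The plan is to adapt the classical Smith normal form algorithm to the non-commutative setting of $\kt$, using both the left and right Euclidean properties mentioned just above the lemma.

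First I would set up elementary operations: a row operation $R_i \leftarrow R_i - c R_j$ corresponds to multiplication on the left by an element of $\gl_m(\kt)$, and a column operation $C_j \leftarrow C_j - C_k\, c$ corresponds to multiplication on the right by an element of $\gl_n(\kt)$; row and column swaps are handled by permutation matrices. These are the only moves needed.

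Given a nonzero matrix $L$, I would locate an entry of minimum $\tau$-degree and swap rows and columns to bring it to position $(1,1)$. I then clean column~$1$: for each $i \geq 2$, the Euclidean property furnishes $q_i, r_i \in \kt$ with $L_{i1} = q_i L_{11} + r_i$ and $r_i = 0$ or $\deg r_i < \deg L_{11}$; the row operation $R_i \leftarrow R_i - q_i R_1$ replaces $L_{i1}$ by $r_i$. If some $r_i$ is nonzero, I swap that row into position~$1$; the new $(1,1)$-entry then has strictly smaller degree. Repeating, since degrees are non-negative integers, after finitely many steps column~$1$ contains only the $(1,1)$-entry. Symmetrically, I clean row~$1$ via column operations using the other-sided division $L_{1j} = L_{11} q'_j + r'_j$. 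Any row or column swap during the row phase only decreases $\deg L_{11}$, so alternating the two phases terminates in finitely many rounds. The matrix is now block-diagonal with a single nonzero entry in its first row and column. Applying the same argument to the $(m-1)\times(n-1)$ lower-right submatrix, by induction on $\min(m,n)$, completes the diagonalisation, and the accumulated elementary matrices give the required $U \in \gl_m(\kt)$ and $V \in \gl_n(\kt)$.

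The only point demanding attention, rather than a genuine obstacle, is the asymmetry forced by non-commutativity: reducing a column by left multiplication requires divisions of the form $b = qa + r$, whereas reducing a row by right multiplication requires divisions of the form $b = aq + r$. This is precisely why both the left and right Euclidean properties of $\kt$ are invoked; either one alone would be insufficient. Note that I am not claiming the classical divisibility condition on successive diagonal entries of Smith normal form, which would be more delicate in the non-commutative setting; the statement requires only diagonality, so the procedure above suffices.
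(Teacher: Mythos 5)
Your proof is correct: the alternating row/column reduction, using right division for the left (row) operations and left division for the right (column) operations, with $\deg_\tau$ of the $(1,1)$-entry strictly decreasing whenever a swap is needed, is exactly the standard Smith-normal-form style argument, and you correctly identify why both Euclidean properties of $\kt$ are required. The paper gives no proof of its own but cites Goss (Prop.~5.4.8), where the lemma is proved by essentially this same elementary-operations induction, so your route coincides with the intended one.
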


Let $n$ be a positive integer, $X_1,\ldots,X_n$ be $n$ indeterminates, 
$\tau_i = X_i^q$ ($1\leq i \leq n$) and 
$\lan := K\{\tau_1\}\oplus\ldots\oplus K\{\tau_n\}$, which is a free $\kt$-module 
of rank $n$ for the obvious action : $P\{\tau\}.Q(X_i) = P(Q(X_i))$.

\begin{defi}\label{qvar}
Let $S\subset\lan$, we define $Z(S)$, the ``zeroes of $S$'', by
$$Z(S) =\{(x_1,\ldots,x_n)\in K^n \mid\forall f\in S,\ f(x_1,\ldots,x_n) = 0\}.$$
The set $Z(S)$ is called a $q$-variety.
\end{defi}

\begin{remark}  The $q$-varieties are Zariski closed subsets
of $K^n$ and are also $\Fq$-vector spaces. It can be proved that it is a caracterisation
of $q$-varieties but we won't use this result here.
\end {remark}

\begin{proposition}\label{inter}
An intersection of $q$-varieties is a $q$-variety.
\end{proposition}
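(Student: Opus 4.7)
The plan is to mimic the classical algebraic geometry argument: show that the intersection of the zero sets equals the zero set of the union of the defining subsets of $\lan$.

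More precisely, suppose $\{Z(S_i)\}_{i\in I}$ is a family of $q$-varieties with each $S_i \subset \lan$. Set $S := \bigcup_{i\in I} S_i \subset \lan$. I would first claim
$$\bigcap_{i\in I} Z(S_i) = Z(S),$$
and then observe that the right-hand side is, by Definition \ref{qvar}, a $q$-variety, which gives the result.

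The verification of the claim is a straightforward unpacking of Definition \ref{qvar}: a point $x = (x_1,\ldots,x_n) \in K^n$ lies in $\bigcap_i Z(S_i)$ if and only if for every $i \in I$ and every $f \in S_i$ one has $f(x) = 0$, which is in turn equivalent to $f(x) = 0$ for every $f \in \bigcup_i S_i = S$, i.e.\ $x \in Z(S)$.

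There is no substantive obstacle here; the statement is purely formal and uses nothing beyond the definition of $Z(-)$. The only point worth mentioning is that the index set $I$ is allowed to be arbitrary (even infinite), but since we pass to the union $S \subset \lan$ and the definition of $Z(S)$ places no finiteness restriction on $S$, this causes no difficulty. This is the exact analogue of the fact that intersections of Zariski-closed sets are Zariski-closed, transported to the $\Fq$-linear setting via the module $\lan$.
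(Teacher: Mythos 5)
Your proposal is correct and is exactly the paper's argument: the paper also proves the proposition by the identity $\bigcap_{i\in I}Z(S_i) = Z(\bigcup_{i\in I}S_i)$, which follows immediately from Definition \ref{qvar}. No further comment is needed.
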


\begin{proof}
We have trivially $\bigcap_{i\in I}Z(S_i) = Z(\cup_{i\in I}S_i)$.
\end{proof}

\begin{defi}\label{adher} Let $F\subset K^n$, 
we define $\overline{F}$ as the intersection of all 
$q$-varieties containing $F$.
Using proposition \ref{inter}, $\overline{F}$ is the smallest $q$-variety containing $F$.
\end{defi}

\begin{remark}\label{finite}
Let $\Lambda$ be the $\kt$-module generated by $S$ in $\lan$. One gets immediatly that 
$Z(S) = Z(\Lambda)$, so, in the definition of a $q$-variety, we can suppose that $S$
is a $\kt$-submodule of $\lan$. Furthermore, since $\kt$ is Noetherian, the 
$\kt$-submodules of $\lan$ are finitely generated, so that a $q$-variety can be defined
by a finite number of equations.
\end{remark}

\begin{defi}\label{module} Let $F\subset K^n$, we define $M(F)$ by
$$M(F) =\{f\in \lan\mid \forall (x_1,\ldots,x_n)\in F,\ f(x_1,\ldots,x_n) = 0\}.$$
It is immediat that $M(F)$ is a $\kt$-submodule of $\lan$.
\end{defi}

\begin{proposition}\label{nullpart}
 Let $F\subset K^n$. We have the following equality :
$$Z(M(F))= \overline{F}.$$
\end{proposition}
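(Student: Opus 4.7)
The plan is a standard Galois-connection argument between the operators $Z$ and $M$, proving both inclusions of the desired equality.

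For the inclusion $\overline{F} \subseteq Z(M(F))$, I would first observe that directly from Definition \ref{module}, every $f \in M(F)$ vanishes on $F$, which means $F \subseteq Z(M(F))$. The set $Z(M(F))$ is a $q$-variety by Definition \ref{qvar}, since $M(F)$ is a subset (in fact a $\kt$-submodule) of $\lan$. Then, because $\overline{F}$ is by Definition \ref{adher} the smallest $q$-variety containing $F$, we immediately obtain $\overline{F} \subseteq Z(M(F))$.

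For the reverse inclusion $Z(M(F)) \subseteq \overline{F}$, I would use that $\overline{F}$ is itself a $q$-variety, so by Definition \ref{qvar} (combined with Remark \ref{finite}) there exists a subset $S \subseteq \lan$ with $\overline{F} = Z(S)$. Since $F \subseteq \overline{F} = Z(S)$, every $f \in S$ vanishes on $F$, which is exactly the statement $S \subseteq M(F)$. Applying the obvious fact that $Z$ is inclusion-reversing (any function vanishing on a larger set of equations has fewer zeros, i.e.\ $S \subseteq T \Rightarrow Z(T) \subseteq Z(S)$), we get $Z(M(F)) \subseteq Z(S) = \overline{F}$.

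Combining the two inclusions yields the equality. There is no real obstacle in this proof: it is the formal part of a Galois correspondence and relies only on the definitions of $Z$, $M$, and $\overline{F}$, together with the elementary monotonicity of $Z$. The more substantive content (that $\overline{F}$ is indeed a $q$-variety, hence realisable as $Z(S)$ for some $S$) has already been established in Proposition \ref{inter} and Definition \ref{adher}.
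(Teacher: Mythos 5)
Your argument is correct and coincides with the paper's proof: both directions are obtained exactly as in the text, first noting that $Z(M(F))$ is a $q$-variety containing $F$ (hence contains $\overline{F}$), and then writing $\overline{F}=Z(S)$, deducing $S\subseteq M(F)$ and using the monotonicity of $Z$. No differences worth noting.
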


\begin{proof} By definitions \ref{qvar} and \ref{module}, $F\subset Z(M(F))$. 
Definition \ref{adher} implies that $\overline{F}\subset  Z(M(F))$. Conversely, 
$\overline{F}$ is a $q$-variety, so there exists a submodule $M$ such that 
$\overline{F}= Z(M)$. Since $F\subset \overline{F}$, we immediatly have that
$M\subset M(F)$. It implies trivially that $Z(M(F)) \subset Z(M) =  \overline{F}$. 
\end{proof}

\begin{remark}
The module $M(F)$ has the following property : let $f\in \lan$ be such that
$\tau f \in M(F)$, then $f\in M(F)$. It means that the quotient module $\lan/M(F)$ has no
$\tau$-torsion.
\end{remark}

This leads to the following definition :

\begin{defi}\label{rad}
Let $\Lambda$ be a submodule of $\lan$. We define $\rad(\Lambda)$ by
$$\rad(\Lambda) = \{ f \in \lan \mid \exists N\in \N,\ \tau^N f \in \Lambda\}.$$
A module $\Lambda$ such that $\rad(\Lambda) = \Lambda$ is said to be radical. 
For example, for any
$F\subset K^n$, the module $M(F)$ is radical.
\end{defi}

\begin{proposition}\label{radismod}
Let $\Lambda$ be a submodule of $\lan$, then $\rad(\Lambda)$ is also a submodule of $\lan$ 
and $\rad(\Lambda)$ is radical.
\end{proposition}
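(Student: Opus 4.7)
The plan is to verify the two assertions directly from the definition, with the only subtle point being the non-commutativity of $\kt$.

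First I would show that $\rad(\Lambda)$ is closed under addition: given $f,g\in\rad(\Lambda)$ with $\tau^N f\in \Lambda$ and $\tau^M g\in \Lambda$, I take $L=\max(N,M)$ and use that $\Lambda$ is a $\kt$-submodule to get $\tau^L(f+g)=\tau^L f+\tau^L g\in\Lambda$, so $f+g\in\rad(\Lambda)$.

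The main (and really the only) point requiring care is closure under the $\kt$-action, because $\kt$ is non-commutative, so one cannot simply pull $\tau^N$ past an arbitrary $P\{\tau\}\in\kt$. The key computation is that for $P=\sum_i a_i\tau^i\in\kt$ one has
\[
\tau^N P \;=\; \sum_i a_i^{q^N}\tau^{N+i} \;=\; P^{(N)}\tau^N,
\]
where $P^{(N)}:=\sum_i a_i^{q^N}\tau^i$ is again an element of $\kt$. Given $f\in\rad(\Lambda)$ with $\tau^N f\in\Lambda$ and an arbitrary $P\in\kt$, this identity gives
\[
\tau^N(Pf) \;=\; (\tau^N P)f \;=\; P^{(N)}(\tau^N f).
\]
Since $\tau^N f\in\Lambda$ and $\Lambda$ is a $\kt$-submodule, $P^{(N)}(\tau^N f)\in\Lambda$, hence $Pf\in\rad(\Lambda)$. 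Combined with additive closure, this proves $\rad(\Lambda)$ is a $\kt$-submodule of $\lan$.

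Finally, to show $\rad(\Lambda)$ is radical, I verify $\rad(\rad(\Lambda))=\rad(\Lambda)$. The inclusion $\rad(\Lambda)\subset\rad(\rad(\Lambda))$ is immediate by taking $N=0$. Conversely, if $f\in\rad(\rad(\Lambda))$ there exists $N$ with $\tau^N f\in\rad(\Lambda)$, and then some $M$ with $\tau^M(\tau^N f)=\tau^{M+N}f\in\Lambda$, so $f\in\rad(\Lambda)$. No real obstacle arises here; the whole proof is essentially a bookkeeping exercise, with the $q$-power twist $P\mapsto P^{(N)}$ being the only non-trivial ingredient.
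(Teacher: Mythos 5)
Your proof is correct and uses the same key idea as the paper, namely the twist identity $\tau^N P = \bigl(\sum_i a_i^{q^N}\tau^i\bigr)\tau^N$ to push $\tau^N$ past an arbitrary element of $\kt$. The paper's proof is exactly this computation for closure under the $\kt$-action (leaving the additive closure and the identity $\rad(\rad(\Lambda))=\rad(\Lambda)$ implicit, which you spell out correctly).
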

\begin{proof} We only have to prove that if $f\in \rad(\Lambda)$ and $P\in \kt$, 
then $Pf\in \rad(\Lambda)$.
By definition, there exists $N\in \N$ such that $\tau^Nf \in \Lambda$. 
Let $P =\sum_{i=0}^d a_i\tau^i$, then $\tau^N P =\sum_{i=0}^d a_i^{q^N}\tau^{N+i}=
(\sum_{i=0}^d a_i^{q^N}\tau^{i})\tau^N = Q\tau^N$ with
$Q=\sum_{i=0}^d a_i^{q^N}\tau^{i}\in \kt$. 
Now we have $\tau^N Pf= Q\tau^Nf$. But $\tau^Nf\in \Lambda$ and 
$\Lambda$ is a $\kt$-module, hence
 $\tau^N Pf\in \Lambda$. By definition, $Pf\in \rad(\Lambda)$.
\end{proof}

\begin{defi}\label{morph}
Let $F\subset K^n$ and $H\subset K^m$ be $q$-varieties. A morphism from 
$F$ to $H$ is a map $\psi : F \longrightarrow H$ such that there exists $f_1,\ldots,f_m
\in \lan$ satisfaying :
$$\forall x\in F,\ \psi(x) =(f_1(x),\ldots,f_m(x)).$$
An isomorphism is a bijective morphism $\psi$ such that $\psi^{-1}$ is also a morphism.
\end{defi}

\begin{example}
The map $\tau : K\longrightarrow K$ is a morphism. 
It is bijective but it is not an isomorphism.

Let $P\in \kt$ and  $\psi : K^2\longrightarrow K^2$ be the morphism defined by the matrix 
$
\begin{pmatrix}
\tau^0& P\\
0&\tau^0
\end{pmatrix}
$. It means that
$\psi(x_1,x_2) =(x_1+P(x_2),x_2)$.
It is clear that $\psi$
is an isomorphism since  $\psi^{-1}$ is given by the matrix
$
\begin{pmatrix}
\tau^0& -P\\
0&\tau^0
\end{pmatrix}
$.
\end{example}

\begin{theorem}\label{fonct}
Let $F\subset K^n$ and $H\subset K^m$ be $q$-varieties and $\mor(F,H)$ be the $\Fq$-vector 
space of all morphisms from $F$ to $H$, then there exists a fonctorial isomorphism of
$\Fq$-vector spaces
$$\mor(F,H) \simeq \Hom_{\kt}(\Lambda_m/M(H),\lan/M(F)).$$
\end{theorem}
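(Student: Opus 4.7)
The plan is to construct an explicit pair of mutually inverse $\Fq$-linear maps between the two sides and verify functoriality formally. The key bridge is that elements of $\kt$ act by $\Fq$-linear polynomial substitution, so the $\kt$-action commutes with the substitution used to compose morphisms.

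For the forward map $\alpha$, given $\psi \in \mor(F,H)$ defined by $f_1,\ldots,f_m \in \lan$, I would consider the unique $\kt$-module map $\psi^* : \Lambda_m \to \lan$ sending the generator $Y_j$ to $f_j$; concretely $\psi^*(g) = g(f_1,\ldots,f_m)$ for $g \in \Lambda_m$. That this is $\kt$-linear follows from $(P\cdot g)(f_1,\ldots,f_m) = P(g(f_1,\ldots,f_m))$, which uses only the $\Fq$-linearity of $P$. Because $\psi(F) \subset H$, any $g \in M(H)$ satisfies $\psi^*(g)(x) = g(\psi(x)) = 0$ for $x \in F$, so $\psi^*(M(H)) \subset M(F)$ and $\psi^*$ descends to $\alpha(\psi) : \Lambda_m/M(H) \to \lan/M(F)$. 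Independence of the choice of lifts $f_j$ is immediate: two such lifts differ by elements of $M(F)$, which the quotient absorbs (using that $M(F)$ is a $\kt$-submodule).

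For the inverse $\beta$, given $\phi \in \Hom_\kt(\Lambda_m/M(H), \lan/M(F))$, I lift each $\phi(\bar Y_j)$ to some $f_j \in \lan$ and set $\beta(\phi)(x) := (f_1(x),\ldots,f_m(x))$ for $x \in F$. The essential verification is $\beta(\phi)(F) \subset H$: writing $g \in M(H)$ as $\sum_j G_j(Y_j)$, one has $\bar g = \sum_j G_j \cdot \bar Y_j$ in $\Lambda_m/M(H)$, and $\kt$-linearity of $\phi$ yields $\overline{g(f_1,\ldots,f_m)} = \phi(\bar g) = 0$; hence $g(f_1,\ldots,f_m) \in M(F)$ and $g(\beta(\phi)(x)) = 0$ for $x \in F$. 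By Proposition \ref{nullpart}, $H = \overline H = Z(M(H))$, so $\beta(\phi)(x) \in H$. Checking $\alpha\circ\beta = \Id$ and $\beta\circ\alpha = \Id$ is then purely formal: both constructions are determined by the data $(\bar f_1,\ldots,\bar f_m)$ since $\bar Y_1,\ldots,\bar Y_m$ generate $\Lambda_m/M(H)$ as a $\kt$-module and $\alpha(\psi)(\bar Y_j) = \bar f_j$. Contravariant functoriality follows from a direct computation: if $\psi_1 : F\to H$ has components $(f_j)$ and $\psi_2 : H\to L$ has components $(g_k)$, then $\psi_2\circ\psi_1$ has components $(g_k(f_1,\ldots,f_m))$, so $(\psi_2\circ\psi_1)^* = \psi_1^*\circ\psi_2^*$. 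There is no serious obstacle here; the only conceptual point is recognising that the geometric containment $\psi(F)\subset H$ corresponds to annihilation of $M(H)$ via the Nullstellensatz identification $H = Z(M(H))$.
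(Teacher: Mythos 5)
Your construction is correct and is essentially the paper's own proof: the forward map is the paper's $u_\psi$ (substitution $X_j\mapsto f_j$ descending through $M(H)$ because $\psi(F)\subset H$), and the inverse is the paper's $\psi_u$, with the containment $\beta(\phi)(F)\subset H$ obtained exactly as in the paper via $\kt$-linearity and the identification $H=Z(M(H))$ from Proposition \ref{nullpart}. Your added details (independence of lifts via $M(F)$ being a $\kt$-submodule, the explicit check of the two compositions, and contravariant functoriality $(\psi_2\circ\psi_1)^*=\psi_1^*\circ\psi_2^*$) only flesh out steps the paper declares straightforward.
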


\begin{proof} Let $\psi : F \longrightarrow H$ be a morphism given by 
$\psi(x) =(f_1(x),\ldots,f_m(x))$. We can define a $\kt$-linear map $u_\psi$ from $\Lambda_m$ to
$\lan/M(F)$ by $u_\psi(X_j) = f_j \mod M(F)$. It is clear that it does not depend on the choice
of the $f_j$. Let $g = \sum_{j=1}^m g_j(X_j) \in M(H)$ with $g_j\in \kt$ and $x\in F$,
\begin{align*}
u_\psi(g)(x) &= u_\psi\left(\sum_{j=1}^m g_j(X_j)\right)(x)\\
&=\sum_{j=1}^m g_j\left(u_\psi(X_j)\right)(x)\\
&=\sum_{j=1}^m g_j(f_j)(x)\\
&=g(f_1(x),\ldots,f_m(x))\\
&=0
\end{align*}
since $(f_1(x),\ldots,f_m(x))\in H$ and $g\in M(H)$. It follows that $u_\psi(g) = 0 \mod M(F)$
so $u_\psi : \Lambda_m/M(H) \longrightarrow \lan/M(F)$ is well-defined. 

Conversely, let $u : \Lambda_m/M(H) \longrightarrow \lan/M(F)$ be a $\kt$-modules morphism
and $f_1,\ldots,f_m\in\lan$ be such that
$$u(\overline{X_j}) \equiv \overline{f_j} \mod M(F).$$
Let us define $\psi_u: F \longrightarrow K^m$ by
$$\forall x\in F,\ \psi_u(x) =(f_1(x),\ldots,f_m(x)).$$
It is clear that $\psi_u$ does not depend on the choice of the $f_j$. We now have to show
that $\psi_u(F)$ is included in $H$. Let $g = \sum_{j=1}^m g_j(X_j) \in M(H)$ and $x\in F$,
\begin{align*}
g\left(\psi_u(x)\right) &=\sum_{j=1}^m g_j(f_j)(x) \\
&=\sum_{j=1}^m g_j\left(u(\overline{X_j})\right)(x)\\
&=u\left(\sum_{j=1}^m g_j(\overline{X_j})\right)(x)
\text{ (since } u \text{ is } \kt\text{-linear)}\\
&=u\left(\overline{\sum_{j=1}^m g_j(X_j)}\right)(x)\\
&=u(\overline{g})(x) =u(0)(x) = 0.
\end{align*}
By definition, it implies that $\psi_u(x) \in Z(M(H))$. But $Z(M(H)) = \overline{H} = H$ by 
proposition \ref{nullpart}, proving that $\psi_u(F)\subset H$.

It is now straightforward to prove that $\tilde{u} : \psi \mapsto u_\psi$ 
and $\tilde{\psi} : u \mapsto \psi_u$ 
are reciprocal isomorphisms.
\end{proof}

The previous theorem implies that the $\kt$-module $\lan/M(F)$ 
depends only on the isomorphism class of $F$, so 
we can set the following definition :

\begin{defi}\label{fonction}
Let $F\subset K^n$ be a $q$-variety. The $\kt$-module $\lan/M(F)$ is called the module
of $\Fq$-linear functions on $F$ and is denoted by $K\{F\}$. By construction, $K\{F\}=\mor(F,K)$.
\end{defi}

\begin{proposition}
Let $F\subset K^n$ and $H\subset K^m$ be $q$-varieties, and $\psi : F \longrightarrow H$ 
be a morphism from $F$ to $H$. Then for any $q$-variety $G\subset H$, $\psi^{-1}(G)$ is
$q$-variety.
\end{proposition}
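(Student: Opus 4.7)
The plan is to realize $\psi^{-1}(G)$ as an explicit zero set in $K^n$ by pulling back defining equations of $G$ along $\psi$. First I would fix $f_1,\ldots,f_m\in\lan$ such that $\psi(x) = (f_1(x),\ldots,f_m(x))$ for every $x\in F$, and pick a set $S \subset \Lambda_m$ with $G = Z(S)$; by Remark \ref{finite} one may even take $S$ to be a finite generating family of $M(G)$.

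Next I would observe that composition pulls $\Fq$-linear polynomials back to $\Fq$-linear polynomials: for any $g = \sum_{j=1}^m g_j(X_j) \in \Lambda_m$, the expression
$$g\circ\psi := \sum_{j=1}^m g_j(f_j)$$
is again an element of $\lan$, because each $g_j$ belongs to $\kt$ and each $f_j$ to $\lan$, and $\kt$ is closed under composition. The tautological equivalence $x\in\psi^{-1}(G) \iff \psi(x)\in G \iff (g\circ\psi)(x) = 0$ for every $g\in S$ then yields
$$\psi^{-1}(G) = F \cap Z\bigl(\{\,g\circ\psi : g\in S\,\}\bigr).$$

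To finish, I would invoke the fact that $F$ is itself a $q$-variety, so $F = Z(M(F))$, and apply Proposition \ref{inter} (intersections of $q$-varieties are $q$-varieties). This gives
$$\psi^{-1}(G) = Z\bigl(M(F) \cup \{\,g\circ\psi : g\in S\,\}\bigr),$$
which by Definition \ref{qvar} is a $q$-variety, as required.

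I do not expect a serious obstacle. The only point requiring any care is verifying that $g\circ\psi$ lies in $\lan$, i.e.\ that the pull-back operation preserves $\Fq$-linearity; this reduces to the closure of $\kt$ under composition, which is built into its definition. Once that is observed, the argument is essentially the same bookkeeping as the classical fact that preimages of Zariski closed sets under polynomial maps are Zariski closed.
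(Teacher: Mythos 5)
Your proof is correct and is essentially the same as the paper's: pull back the defining equations of $G$ through the components $f_1,\ldots,f_m$ (using closure of $\kt$ under composition) and intersect with the $q$-variety $F$, invoking Proposition \ref{inter}. In fact your write-up is the cleaner version — the paper's printed proof appears to contain typos, taking $S$ with $H=Z(S)$ and intersecting with $H$ where $G$ and $F$ are clearly intended.
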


\begin{proof} Let $S\subset \Lambda_m$ be a set defining $H$ : $H = Z(S)$. One gets from 
definitions that $\psi^{-1}(G) = H\cap Z(\{g(f_1,\dots,f_m)\mid g \in S\})$ where 
$f_1,\dots,f_m$ are as in definition \ref{morph}.
\end{proof}

\begin{remark}\label{noyau}
The kernel of a morphism is a $q$-variety but, indeed, any $q$-variety can be expressed
as a kernel : let $F\subset K^n$ be a $q$-variety defined by a finite number of equations
$f_1,\ldots,f_m$ (see remark \ref{finite}). Then the morphism 
$\psi : K^n \longrightarrow K^m$ defined by $\psi(x) = (f_1(x),\ldots,f_m(x))$ has a kernel
equal to~$F$.
\end{remark}

\begin{lemma}\label{finitevar}
Let $F\subset K^n$ be a finite $\Fq$-vector space, then $F$ is a $q$-variety.
\end{lemma}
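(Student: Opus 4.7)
The plan is to show $F = Z(M(F))$, which is a $q$-variety by construction. The inclusion $F \subset Z(M(F))$ is immediate from the definitions, so the content is the reverse inclusion, which I will reduce to the following extension statement.

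\emph{Extension Claim.} For any finite $\Fq$-subspace $V \subset K^n$ and any $\Fq$-linear map $\phi : V \to K$, there exists $f \in \lan$ with $f|_V = \phi$.

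Granting the claim, the lemma follows at once: given $y \in K^n \setminus F$, the sum $V := F + \Fq y$ is direct (since $\Fq y \cap F = \{0\}$), so I can define an $\Fq$-linear form $\phi : V \to K$ by $\phi|_F = 0$ and $\phi(y) = 1$. The claim produces $f \in \lan$ extending $\phi$; then $f \in M(F)$ but $f(y) \neq 0$, so $y \notin Z(M(F))$.

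I would prove the extension claim by induction on $n$. For $n = 1$, I invoke the classical fact that for a finite $\Fq$-subspace $V \subset K$ of dimension $d$, the polynomial $e_V(X) = \prod_{\alpha \in V}(X - \alpha)$ is $\Fq$-linear, so $e_V \in \kt$ of $\tau$-degree $d$. Right-euclidean division in $\kt$ then shows that the kernel of the restriction map $\kt \to \Hom_\Fq(V, K)$ is exactly the left ideal $\kt \cdot e_V$: any $f$ can be written $f = g e_V + r$ with $\tau$-degree of $r$ less than $d$, and such an $r$ vanishing on $V$ has more roots than its classical degree permits, hence is zero. The quotient $\kt/\kt e_V$ is thus $K$-free of dimension $d$ with basis $1, \tau, \ldots, \tau^{d-1}$, matching $\dim_K \Hom_\Fq(V, K) = d$, and the induced injection is an isomorphism, giving surjectivity.

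For $n \geq 2$, I split off the last coordinate: let $\pi : K^n \to K^{n-1}$ project onto the first $n-1$ coordinates, and set $V' := \pi(V)$ and $W := V \cap \ker\pi$. Identifying $W$ with a finite $\Fq$-subspace of $K$ via the $n$-th coordinate, the case $n = 1$ yields $h \in K\{\tau_n\}$ extending $\phi|_W$; I view $h$ as an element $\hat h \in \lan$ depending only on $X_n$. The difference $\phi - \hat h|_V$ then vanishes on $W$ and thus descends to an $\Fq$-linear map $\tilde \psi : V' \to K$. The inductive hypothesis applied to $V' \subset K^{n-1}$ produces $\tilde f \in K\{\tau_1\} \oplus \cdots \oplus K\{\tau_{n-1}\}$ extending $\tilde\psi$, which I regard as an element of $\lan$ via the natural inclusion. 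Then $f := \hat h + \tilde f \in \lan$ satisfies $f|_V = \phi$.

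The main obstacle is the base case $n = 1$, where one must combine the additivity of $e_V(X)$ with the euclidean structure of $\kt$ to identify the kernel of the restriction map; once these ingredients are in hand, both the dimension count and the induction on $n$ are routine bookkeeping.
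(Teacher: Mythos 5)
Your proof is correct, but it follows a genuinely different route from the paper. The paper argues by induction on $\dim_{\Fq}F$: the case $\dim_{\Fq}F=1$ is reduced, via a $K$-linear change of coordinates, to $\Fq\times\{0\}\times\cdots\times\{0\}=Z(X_1^q-X_1,X_2,\ldots,X_n)$, and the induction step writes a one-dimensional subspace $H\subset F$ as the kernel of a morphism $\psi$ (remark \ref{noyau}), notes that $\psi(F)$ is a $q$-variety by the induction hypothesis, and concludes with $F=\psi^{-1}(\psi(F))$ and the stability of $q$-varieties under preimages. You instead prove directly that $F=Z(M(F))$ by establishing an extension (interpolation) claim: every $\Fq$-linear form on a finite $\Fq$-subspace $V\subset K^n$ is the restriction of an element of $\lan$, with the base case $n=1$ handled by the additive polynomial $e_V(X)=\prod_{\alpha\in V}(X-\alpha)$, Euclidean division in $\kt$, and a $K$-dimension count, and the case $n\geq 2$ by splitting off the last coordinate. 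Both arguments are sound; the checks in your induction step (directness of $F+\Fq y$, well-definedness of $\tilde\psi$ on $V'=V/W$, and $f=\hat h+\tilde f$ restricting to $\phi$) all go through, and your appeal to the additivity of $e_V$ is the same classical fact (Goss, Ch.~1.3) that the paper itself invokes later. What the paper's proof buys is brevity, since it reuses machinery already in place (morphisms, kernels, preimages); what yours buys is a stronger and more self-contained statement — surjectivity of the restriction map $\lan\to\Hom_{\Fq}(V,K)$ — which is precisely the interpolation tool the paper needs again in Section~5 (the lemma extending an $\Fq$-linear map that is a morphism on $\mathring{F}$), together with an explicit identification of $F$ as $Z(M(F))$.
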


\begin{proof}
We prove it by induction on $d = \dim_\Fq F$. If $d = 0$, $F =\{0\} = Z(X_1,\ldots,X_n)$ is a
$q$-variety. If $d = 1$, $F = \Fq x$ for some $x\in K^n$. There exists a $K$-linear bijective
map $\psi$ such that $\psi(x) = (1,0,\ldots,0)$. But $\Fq\times\{0\}\times\ldots\times \{0\} =
Z(X_1^q-X_1,X_2,\ldots,X_n)$, so it is $q$-variety and, by construction,
$F = \psi^{-1}(\Fq\times\{0\}\times\ldots\times \{0\})$, so it is also a $q$-variety.

Suppose that any $\Fq$-vector space of dimension $d$ is a $q$-variety and let $F$ be an 
$\Fq$-vector space of dimension $d+1$. Choose $H\subset F$ a subvector space of dimension $1$
and $\psi$ a morphism such that $\ker \psi = H$ (see remark \ref{noyau}). 
Then $\dim_\Fq \psi(F) = d$, so it is a
$q$-variety. By construction $F = \psi^{-1}(\psi(F))$, so it is also a $q$-variety.
\end{proof}

\section{Main theorem on $q$-Varieties}

\begin{lemma}\label{null} Let $\Lambda$ be a submodule of $\lan$, then
$$M(Z(\Lambda)) = \rad(\Lambda).$$
\end{lemma}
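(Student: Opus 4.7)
The plan is to prove the two inclusions separately. The inclusion $\rad(\Lambda)\subset M(Z(\Lambda))$ is immediate: if $f=\sum_j g_j(X_j)\in\rad(\Lambda)$ and $\tau^N f\in\Lambda$, then for every $x\in Z(\Lambda)$
$$(\tau^N f)(x)=\sum_j g_j(x_j)^{q^N}=f(x)^{q^N}=0$$
because $q$ is a power of the characteristic of $K$, so $f(x)=0$. The reverse inclusion is the substantive one, and the plan is to reduce it, via Lemma~\ref{diag}, to a one-variable statement about $\kt$.

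Since $\kt$ is Noetherian, $\Lambda$ admits a finite generating family $f_1,\ldots,f_m$; writing $f_i=\sum_j a_{ij}(X_j)$ assembles them into a matrix $L$ over $\kt$, and Lemma~\ref{diag} yields $U\in\gl_m(\kt)$ and $W\in\gl_n(\kt)$ with $ULW$ diagonal. The action of $U$ only replaces the generators of $\Lambda$ by other generators of the same submodule, while $W$ realizes a $\kt$-automorphism of $\lan$ which, through Theorem~\ref{fonct}, corresponds to an isomorphism of $q$-varieties of $K^n$; both operations transport $Z$, $M$, and $\rad$ compatibly. I may therefore assume $\Lambda=\bigoplus_{j=1}^n \kt\,d_j(X_j)$, some $d_j$ possibly zero. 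Then $Z(\Lambda)=\prod_j F_j$ with $F_j=\{x\in K:d_j(x)=0\}$, and substituting $x_i=0$ for $i\neq j$ shows that $f=\sum_j f_j(X_j)$ lies in $M(Z(\Lambda))$ iff each $f_j$ vanishes on $F_j$, while $f\in\rad(\Lambda)$ iff for every $j$ there exists $N$ with $\tau^N f_j\in\kt\,d_j$. The lemma thus reduces to the one-variable identity, for every $d\in\kt$ with root set $F(d)\subset K$:
$$\{f\in\kt: f|_{F(d)}=0\}\;=\;\{f\in\kt:\exists N,\ \tau^N f\in\kt\,d\}.$$

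To prove this identity, I would factor $d=\tau^k\tilde{d}$ with $\tilde{d}$ not left-divisible by $\tau$; then $d(X)=\tilde{d}(X)^{q^k}$, so $F(d)=F(\tilde{d})$, and the coefficient of $X$ in $\tilde{d}$ is nonzero, which makes $\tilde{d}$ separable as a polynomial in $X$ with exactly $q^{\deg_\tau\tilde{d}}$ roots in $K$. A right Euclidean division $f=Q\tilde{d}+R$ with $\deg_\tau R<\deg_\tau\tilde{d}$ forces $R=0$ as soon as $f|_{F(\tilde{d})}=0$, by counting roots against degree in $X$; so the left-hand side equals $\kt\,\tilde{d}$. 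For the right-hand side, the commutation $\tau^k Q=Q^{(k)}\tau^k$ (raising each coefficient of $Q$ to the $q^k$-th power) shows that $\tau^N f\in\kt\tau^k\tilde{d}$ whenever $N\geq k$ and $\tilde{d}$ right-divides $f$; conversely any identity $\tau^N f=P\tau^k\tilde{d}$ forces, by the same root-counting argument applied to the remainder of $\tau^N f$ under right-division by $\tilde{d}$, that $\tilde{d}$ right-divides $f$. Both sides therefore equal $\kt\,\tilde{d}$.

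The main obstacle is the separability and root-counting step identifying the $\Fq$-linear polynomials vanishing on $F(\tilde{d})$ with $\kt\,\tilde{d}$; this is the place where the algebraic closedness of $K$ enters decisively, guaranteeing that the separable $\tilde{d}$ attains its full $q^{\deg_\tau\tilde{d}}$ roots. The remaining ingredients---the Smith-form reduction and the Frobenius commutation in $\kt$---are routine bookkeeping.
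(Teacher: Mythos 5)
Your proof is correct and follows essentially the same route as the paper: reduce via Lemma~\ref{diag} to a diagonal module $\kt P_1(X_1)+\ldots+\kt P_r(X_r)$, split off the inseparable part $\tau^k$ of each generator, and conclude by Euclidean division plus root-counting against the separable factor, together with the twist relation $\tau^k Q=Q^{(k)}\tau^k$. The only nitpick is that in your converse step you should divide $f$ (not $\tau^N f$, which is exactly divisible) by $\tilde d$; this is a phrasing slip, not a gap.
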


\begin{proof} By definition, $\Lambda\subset M(Z(\Lambda))$. Taking radicals, 
we have
$\rad(\Lambda)\subset \rad(M(Z(\Lambda))) = M(Z(\Lambda))$ 
since $M(F)$ is a radical module for any $F$.

Conversely, let $f_1,\ldots,f_m \in \lan$ be a finite generating set for $\Lambda$. For
$1 \leq i \leq m$, write $f_i = \sum_{j=1}^n L_{i,j}(X_j)$ with $ L_{i,j}\in\kt$. 
The matrix $L = (L_{i,j})_{\substack{1 \leq i \leq m\\1 \leq j \leq n}}$ is such that 
$\ker L = Z(\Lambda)$. Applying Lemma \ref{diag}, 
there exist matrices $U\in \gl_m(\kt)$ and $V\in \gl_n(\kt)$ such that
$D = ULV$ is diagonal. Without loss of generality, we can suppose that $D_{ii}\not = 0$ for 
$i\leq r$ and $D_{ii} = 0$ if $i > r$. It means that, up to changing variables and the 
generating set, one can suppose that 
$$\Lambda = \kt P_1(X_1)+\ldots+\kt P_r(X_r)$$
with $P_1,\ldots,P_r \in \kt\setminus \{0\}$. It follows that 
$$Z(\Lambda) = \ker P_1\times\ldots\times\ker P_r\times K^{n-r}.$$
Let $g = \sum_{i=1}^n g_i(X_i) \in M(Z(\Lambda))$. For all $i > r$ and $x_i \in K$, 
$(0,\ldots,0,x_i,0,\ldots,0)\in Z(\Lambda)$. It implies that $g_i(x_i) = 0$, 
so that $g_i = 0$. For $1\leq i \leq r$, write $P_i = \tau^{N_i} Q_i$ with $Q_i$ separable.
By construction, $\ker Q_i = \ker P_i \subset \ker g_i$. Since $\kt$ is Euclidean, 
$g_i = SQ_i+R$ with $\deg_\tau R < \deg_\tau Q_i$. But $\ker Q_i \subset \ker R$ and 
$\dim_\Fq \ker Q_i = \deg_\tau Q_i > \deg_\tau R = \dim_\Fq \ker R$ unless $R$ is zero.
It follows that $g_i= SQ_i$ and 
$$\tau^{N_i}g_i= \tau^{N_i}SQ_i = T\tau^{N_i}Q_i = TP_i\in \Lambda$$
where $T\in \kt$ is such that $\tau^{N_i}S = T\tau^{N_i}$ 
(see proof of proposition \ref{radismod}). Taking $N = \max_{1\leq i\leq r} N_i$, 
one gets immediatly that $\tau^N g \in \Lambda$.
\end{proof}

We can now summarize this in the following theorem.

\begin{theorem}[Main Theorem]\label{bijec}
The map $Z$ from the set of radical modules of $\lan$ to the set of $q$-varieties 
included in $K^n$
and the map $M$ from 
the set of $q$-varieties included in $K^n$ to the set of radical modules 
of $\lan$ are reciprocal bijections.
\end{theorem}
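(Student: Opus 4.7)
The plan is to observe that the Main Theorem is essentially a formal consequence of Proposition \ref{nullpart} and Lemma \ref{null}, which together supply the two composition identities. Before checking the compositions, I would first verify that the two maps are well-defined on the stated domains: by Definition \ref{qvar}, $Z(\Lambda)$ is a $q$-variety for every submodule $\Lambda$ of $\lan$, while Definition \ref{rad} already notes that $M(F)$ is a radical submodule for every $F \subset K^n$. So $Z$ and $M$ indeed restrict to maps between the two sets in the statement.

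Next I would check that $Z \circ M$ is the identity on $q$-varieties. Let $F \subset K^n$ be a $q$-variety. Since $F$ is a $q$-variety containing itself and $\overline{F}$ is by Definition \ref{adher} the smallest such, we have $\overline{F} = F$. Proposition \ref{nullpart} then gives
$$Z(M(F)) = \overline{F} = F.$$

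Then I would check that $M \circ Z$ is the identity on radical submodules. Let $\Lambda \subset \lan$ be a radical submodule. By Lemma \ref{null},
$$M(Z(\Lambda)) = \rad(\Lambda),$$
and by the hypothesis that $\Lambda$ is radical this equals $\Lambda$.

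There is essentially no obstacle at this stage: all the work has already been invested in Lemma \ref{null} (which itself relied on the diagonalization Lemma \ref{diag} and a careful separable/inseparable decomposition argument) and in Proposition \ref{nullpart}. The Main Theorem just packages these two identities as a bijection. I would close the proof by noting that since $Z \circ M$ and $M \circ Z$ are both the identity on their respective domains, $Z$ and $M$ are mutually inverse bijections.
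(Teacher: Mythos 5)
Your proposal is correct and matches the paper's intent exactly: the paper states the Main Theorem as a summary of Proposition \ref{nullpart} (giving $Z(M(F)) = \overline{F} = F$ for a $q$-variety $F$) and Lemma \ref{null} (giving $M(Z(\Lambda)) = \rad(\Lambda) = \Lambda$ for radical $\Lambda$), which is precisely how you argue. Nothing is missing.
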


We now study the direct image of a $q$-variety. It is possible to use general theorems on 
algebraic groups (see \cite{borel}) 
but we give a self-contained proof. First we need the following lemma :

\begin{lemma}\label{finiteindex}
Let $H\subset F\subset K^n$ be $\Fq$-vector spaces such that $F/H$ is finite and $H$ is 
a $q$-variety, then $F$ is also a $q$-variety.
\end{lemma}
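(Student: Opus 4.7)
The plan is to realize $F$ as the preimage of a finite $\Fq$-subspace under a morphism, so that the proposition on inverse images of $q$-varieties established earlier in this section finishes the job.

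First I would invoke Remark~\ref{finite} to write $H = Z(f_1,\ldots,f_m)$ with $f_1,\ldots,f_m \in \lan$, and assemble these generators into a single morphism $\phi : K^n \longrightarrow K^m$ by $\phi(y) = (f_1(y),\ldots,f_m(y))$, whose kernel is exactly $H$. Since $F/H$ is finite, I can pick lifts $x_1,\ldots,x_d\in F$ of an $\Fq$-basis of $F/H$ and set $V = \Fq x_1 + \cdots + \Fq x_d$, so that $F = H + V$. The image $V' := \phi(V)\subset K^m$ is then a finite $\Fq$-subspace of $K^m$, hence a $q$-variety by Lemma~\ref{finitevar}.

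The key set-theoretic identity is $F = \phi^{-1}(V')$. The inclusion $F\subset \phi^{-1}(V')$ is immediate from $\phi(H) = 0$ and $\phi(V) = V'$. For the reverse inclusion, if $\phi(y)\in V'$, choose $v\in V$ with $\phi(y) = \phi(v)$; then $y - v \in \ker\phi = H$, and so $y\in H + V = F$. Once this is in place, the proposition stating that the inverse image of a $q$-variety under a morphism is a $q$-variety, applied to $\phi : K^n \longrightarrow K^m$ and to the $q$-variety $V' \subset K^m$, immediately shows that $F$ is a $q$-variety.

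The proof is genuinely short; the only mild point requiring care is packaging the generating equations of $H$ into a single morphism and verifying the identity $F = \phi^{-1}(V')$. Beyond this there is no real obstacle, and the argument uses only Lemma~\ref{finitevar}, the Noetherian property of $\kt$ (through Remark~\ref{finite}), and the pullback proposition.
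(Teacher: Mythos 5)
Your proof is correct and follows essentially the same route as the paper: realize $H$ as the kernel of a morphism $\phi : K^n \to K^m$ built from its defining equations (this is exactly Remark~\ref{noyau}), note that the image of $F$ is finite hence a $q$-variety by Lemma~\ref{finitevar}, and recover $F$ as the preimage of that finite $q$-variety. Your intermediate space $V$ of coset representatives satisfies $\phi(V)=\phi(F)$, so your argument coincides with the paper's, which simply uses $\psi(F)\simeq F/H$ directly.
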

\begin{proof} Let $\psi$ be a morphism such that $H = \ker \psi$ (see remark \ref{noyau}).
Then $\psi(F) \simeq F/H$ is finite, hence it is a $q$-variety by lemma \ref{finitevar}.
By construction, $F = \psi^{-1}(\psi(F))$, so it is a $q$-variety.
\end{proof}

\begin{theorem}\label{direct}
Let $F$ and $G$ be $q$-varieties, 
and $\psi : F\longrightarrow G$ be a morphism. Then
$\psi(F)$ is a $q$-variety.
\end{theorem}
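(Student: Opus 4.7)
The plan is to reduce to two extreme cases---$F$ finite and $F=K^s$---by exploiting the diagonalization of matrices over $\kt$ (Lemma \ref{diag}), and then to recombine the pieces using Lemma \ref{finiteindex}.

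First, applying Lemma \ref{diag} to a matrix of generators of $M(F)$, exactly as in the proof of Lemma \ref{null}, we may assume up to $q$-variety isomorphism that $F = \ker P_1 \times \cdots \times \ker P_r \times K^{n-r}$ for some $P_1,\ldots,P_r \in \kt\setminus\{0\}$. Since any non-zero element of $\kt$ is a polynomial of positive degree and $K$ is algebraically closed, each $\ker P_i$ is finite; writing $E := \ker P_1 \times \cdots \times \ker P_r$ and $s := n-r$ gives $F = E \times K^s$ with $E$ a finite $\Fq$-vector space. Consequently, $\psi(F) = \psi(E\times\{0\}) + \psi(\{0\}\times K^s)$ in $K^m$.

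The summand $\psi(E \times \{0\})$ is a finite $\Fq$-vector space, hence a $q$-variety by Lemma \ref{finitevar}. For the continuous summand, let $L \in \mathrm{M}_{m,s}(\kt)$ be the matrix of the morphism $\psi\vert_{\{0\}\times K^s}: K^s \to K^m$, and apply Lemma \ref{diag} again to produce $U\in\gl_m(\kt)$, $V\in\gl_s(\kt)$ with $D = ULV$ diagonal, the non-zero entries being $Q_1, \ldots, Q_t \in \kt\setminus\{0\}$. Matrix multiplication over $\kt$ corresponds to composition of $\Fq$-linear morphisms, so $U$ and $V$ induce $q$-variety automorphisms of $K^m$ and $K^s$, and it suffices to describe the image of the diagonal morphism $(y_1,\ldots,y_s)\mapsto(Q_1(y_1),\ldots,Q_t(y_t),0,\ldots,0)$. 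Since $K$ is algebraically closed, every non-zero element of $\kt$ is surjective as a map $K\to K$, so this image is $K^t \times \{0\}^{m-t}$, a $q$-variety. Transporting back by the isomorphism associated to $U^{-1}$, we conclude that $\psi(\{0\}\times K^s)$ is a $q$-variety in $K^m$.

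Finally, $\psi(F)$ contains the $q$-variety $\psi(\{0\}\times K^s)$ as an $\Fq$-subspace whose quotient is a homomorphic image of the finite group $\psi(E\times\{0\})$, hence finite. Lemma \ref{finiteindex} then yields that $\psi(F)$ is itself a $q$-variety. The main bookkeeping challenge is matching the matrix factorization over the non-commutative Ore ring $\kt$ with the corresponding composition of morphisms of $q$-varieties; once those conventions are set, the rest is linear algebra over $\kt$ together with the surjectivity of non-zero elements of $\kt$ as maps $K\to K$.
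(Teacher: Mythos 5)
Your proof is correct and follows essentially the same route as the paper: reduce via Lemma \ref{diag} to $F = E\times K^{n-r}$ with $E$ finite, show the image of the irreducible part $\{0\}\times K^{n-r}$ is a $q$-variety by diagonalizing $\psi$ restricted to it and using surjectivity of non-zero elements of $\kt$ on $K$, and conclude with Lemma \ref{finiteindex} since the quotient of $\psi(F)$ by that image is finite. The only cosmetic difference is that you phrase the finite-index step through the decomposition $\psi(F)=\psi(E\times\{0\})+\psi(\{0\}\times K^{n-r})$, whereas the paper directly notes $\psi(F)/\psi(H)$ is finite because $F/H$ is.
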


\begin{proof}
In proof of lemma \ref{null}, we showed that there exists
$P_1,\ldots,P_r \in \kt\setminus \{0\}$ such that, up to an automorphism of $K^n$, 
$F = \ker P_1\times\ldots\times\ker P_r\times K^{n-r}$. Then 
$H = \{0\}\times\ldots\times \{0\}\times K^{n-r}$ is a $q$-variety such that
$F/H$ is finite. Hence $\psi(F)/\psi(H)$ is also finite. Applying lemma \ref{finiteindex}, 
it is sufficient to prove that $\psi(H)$ is a $q$-variety. Since $H\simeq K^{n-r}$, 
we can consider $\psi_{|H}$ as a morphism from $K^{n-r}$ to $K^m$. Using lemma \ref{diag},
$\psi(H)$ is equal, up to an automorphism of $K^m$, to the direct image of $K^{n-r}$ by
a diagonal morphism. Since for any non-zero polynomial $P\in \kt$, we have $P(K) = K$, 
the image is clearly of the form  
$K^s\times\{0\}\times\ldots\times\{0\}$, hence is a $q$-variety.
\end{proof}

We now give a few consequences of this theorem.

\begin{proposition}\label{somme}
Let $F_1\subset K^n$ and $F_2\subset K^n$ be $q$-varieties, then $F_1+F_2$ is also a
$q$-variety. Indeed, if $F_1= Z(\Lambda_1)$ and $F_2 =  Z(\Lambda_2)$ for some modules
$\Lambda_1$ and $\Lambda_2$, then
$F_1+F_2 = Z(\Lambda_1\cap \Lambda_2)\footnote{A proof, directly from the definition, 
would be welcome.}$.
\end{proposition}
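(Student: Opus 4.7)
The strategy is in two steps: first realize $F_1+F_2$ as the image of a morphism and apply Theorem \ref{direct}, then identify the defining module via the Main Theorem.

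For the first step, the product $F_1 \times F_2 \subset K^{2n}$ is a $q$-variety: if $F_i = Z(\Lambda_i)$, then $F_1\times F_2$ is cut out by the equations from $\Lambda_1$ applied to the first $n$ coordinates together with those from $\Lambda_2$ applied to the last $n$. The addition map $\sigma : F_1\times F_2 \to K^n$, $(x,y)\mapsto x+y$, is given coordinatewise by $\Fq$-linear polynomials in the $2n$ variables, so it is a morphism in the sense of Definition \ref{morph}. Theorem \ref{direct} then gives that $\sigma(F_1\times F_2) = F_1+F_2$ is a $q$-variety.

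For the second step, I first reduce to the case where $\Lambda_1,\Lambda_2$ are radical. Since $Z(\Lambda) = Z(\rad\Lambda)$ and a direct verification shows $\rad(\Lambda_1 \cap \Lambda_2) = \rad\Lambda_1 \cap \rad\Lambda_2$ (for the non-trivial inclusion, if $\tau^{N_i}f \in \Lambda_i$ then $\tau^{\max(N_1,N_2)}f \in \Lambda_1\cap\Lambda_2$), we may assume $\Lambda_i = M(F_i)$. It then suffices to prove
$$M(F_1+F_2) = M(F_1) \cap M(F_2).$$
The inclusion $\subset$ is immediate since $0 \in F_i$ (so $F_i \subset F_1+F_2$); the reverse inclusion uses $\Fq$-linearity: for $f \in M(F_1) \cap M(F_2)$ and $x \in F_1$, $y \in F_2$, one has $f(x+y) = f(x) + f(y) = 0$. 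The Main Theorem \ref{bijec} then delivers $F_1+F_2 = Z(M(F_1+F_2)) = Z(\Lambda_1 \cap \Lambda_2)$.

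There is no genuine obstacle given the machinery already built: both steps are routine. The footnote in the statement calls for a proof of $Z(\Lambda_1\cap \Lambda_2)\subset F_1+F_2$ working \emph{directly} from the definition of $Z$, without invoking Theorem \ref{direct} or the Main Theorem. The opposite inclusion is transparent (any $f \in \Lambda_1\cap \Lambda_2$ vanishes on $F_1+F_2$ by $\Fq$-linearity), so the real challenge would be to produce, for an arbitrary $z \in Z(\Lambda_1\cap \Lambda_2)$, an explicit decomposition $z = x+y$ with $x \in F_1$ and $y \in F_2$ from the vanishing conditions alone.
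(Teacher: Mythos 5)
Your proof is correct and follows essentially the same route as the paper: realize $F_1+F_2$ as the image of the addition morphism on $F_1\times F_2$ via Theorem \ref{direct}, then identify the defining module through radicals and the Main Theorem. The only (welcome) additions are that you spell out the identity $\rad(\Lambda_1\cap\Lambda_2)=\rad(\Lambda_1)\cap\rad(\Lambda_2)$, which the paper leaves to the reader, and that you package the second step as the module-level equality $M(F_1+F_2)=M(F_1)\cap M(F_2)$, whereas the paper argues the two inclusions directly at the level of zero sets.
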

\begin{proof}
It is clear that $F_1\times F_2$ is a $q$-variety in $K^n\times K^n = K^{2n}$ and that 
$\psi :  K^n\times K^n \longrightarrow K^n$ defined by $\psi(x,y) = x+y$ is a morphism. Hence 
by theorem \ref{direct}, $F_1+F_2 = \psi(F_1\times F_2)$ is a $q$-variety.

Now, let $\Lambda = M(F_1+F_2)$. Since $F_1\subset F_1+F_2$, $\Lambda \subset M(F_1) = 
\rad(\Lambda_1)$ by lemma \ref{null}. 
By the same argument, $\Lambda \subset \rad(\Lambda_2)$. It follows
that $\Lambda \subset \rad(\Lambda_1)\cap \rad(\Lambda_2) = \rad(\Lambda_1\cap \Lambda_2)$ 
(the last equality is left to the reader). Applying $Z$, we get
$Z(\Lambda_1\cap \Lambda_2) = Z(\rad(\Lambda_1\cap\Lambda_2))\subset Z(\Lambda) = 
\overline{F_1+F_2} = F_1+F_2$ since $F_1+F_2$ is a $q$-variety by theorem \ref{somme}.
Conversely, since $\Lambda_1\cap\Lambda_2 \subset \Lambda_1$, $F_1 = Z(\Lambda_1) 
\subset Z(\Lambda_1\cap \Lambda_2)$. Using the same argument with $F_2$, we have 
$F_2\subset Z(\Lambda_1\cap \Lambda_2)$, hence $F_1+F_2\subset Z(\Lambda_1\cap \Lambda_2)$.
\end{proof}

\begin{proposition}\label{quotient}
Let $H\subset F$ be $q$-varieties, 
then there exists a $q$-variety, denoted by $F/H$, and a 
morphism $\Pi : F \longrightarrow F/H$ with $\ker \Pi = H$  
satisfaying the following property :
for any morphism $\psi : F \longrightarrow G$ with $\psi_{|H} =0$, there exists a 
unique morphism $\overline{\psi} : F/H  \longrightarrow G$ such that $\psi = \overline{\psi}\circ \Pi$. 
The $q$-variety $F/H$ is called the quotient of $F$ by $H$ and the couple $(F/H, \Pi)$
is unique up to isomorphism. Furthermore, the map $\Pi$ is surjective.
\end{proposition}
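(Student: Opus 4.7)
The plan is to realize $F/H$ as the image of $F$ under a morphism whose components are $\kt$-generators of the ``functions on $F$ vanishing on $H$''. Since $H \subset F$, we have $M(F) \subset M(H)$, so $N := M(H)/M(F)$ is a $\kt$-submodule of $K\{F\} = \lan/M(F)$, and it is finitely generated because $\kt$ is Noetherian. I would choose representatives $g_1, \ldots, g_m \in M(H)$ whose classes generate $N$ over $\kt$, and define $\Pi : F \to K^m$ by $\Pi(x) = (g_1(x), \ldots, g_m(x))$. Set $F/H := \Pi(F)$, which is a $q$-variety by Theorem~\ref{direct}; by construction $\Pi$ is surjective onto it.

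Next I would check $\ker \Pi = H$. The inclusion $H \subset \ker \Pi$ is immediate since $g_i \in M(H)$. Conversely, if $x \in F$ satisfies $g_i(x)=0$ for all $i$, then any $g \in M(H)$ can be written $g = \sum_i Q_i(g_i) + m$ with $Q_i \in \kt$ and $m \in M(F)$, so $g(x)=0$ because $m$ vanishes on all of $F$. Hence $x \in Z(M(H)) = H$ by Theorem~\ref{bijec}.

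For the universal property, let $\psi : F \to G \subset K^k$ be a morphism with $\psi_{|H} = 0$, given by $\psi(x) = (h_1(x), \ldots, h_k(x))$, $h_j \in \lan$. The hypothesis says each class $\overline{h_j} \in K\{F\}$ lies in $N$, so after adjusting $h_j$ by an element of $M(F)$ (which does not change $\psi$ on $F$) I may assume $h_j \in M(H)$. Writing $\overline{h_j} = \sum_i \overline{P_{ji}(g_i)}$ in $N$ with $P_{ji} \in \kt$, I define the morphism $\overline{\psi} : K^m \to K^k$ by $\overline{\psi}(y_1,\ldots,y_m) = \bigl( \sum_i P_{ji}(y_i) \bigr)_{j=1}^{k}$. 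Then $\overline{\psi} \circ \Pi$ and $\psi$ agree on $F$, so $\overline{\psi}(F/H) = \overline{\psi}(\Pi(F)) = \psi(F) \subset G$, yielding a morphism $\overline{\psi} : F/H \to G$ with $\overline{\psi} \circ \Pi = \psi$. Uniqueness of $\overline{\psi}$ follows from the surjectivity of $\Pi$, and uniqueness of $(F/H, \Pi)$ up to isomorphism is the standard universal-property argument.

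The only real subtlety—and the main obstacle—is not the set-theoretic or group-theoretic existence of a quotient, but the requirement that $\overline{\psi}$ be a morphism of $q$-varieties, i.e., given by $\Fq$-linear polynomials. Building $\Pi$ from a $\kt$-generating family of $M(H)/M(F)$ is exactly what arranges this: each $\overline{h_j}$ is a $\kt$-combination of the $\overline{g_i}$, and this same combination, read as an $\Fq$-linear polynomial expression in the coordinates of $K^m$, furnishes the required formula for $\overline{\psi}$.
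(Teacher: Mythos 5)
Your proposal is correct and is essentially the paper's own argument: the paper builds $\Pi$ from a $\kt$-generating set of $M(H)$ itself (so that $\ker\Pi = Z(M(H)) = H$ is immediate) rather than from lifts of generators of $M(H)/M(F)$, but since elements of $M(F)$ vanish on $F$ the two constructions coincide on $F$. Both then invoke Theorem~\ref{direct} to see that $\Pi(F)$ is a $q$-variety, obtain $\overline{\psi}$ by writing the components of $\psi$ as $\kt$-combinations of the chosen generators, and deduce uniqueness from the surjectivity of $\Pi$.
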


\begin{proof}
It is clear that, if it exists, the quotient is unique. Let us prove the existence.

Let $f_1,\ldots,f_m \in \Lambda_n$ be a generating set of $M(H)$ and define 
$\Pi : K^n \longrightarrow K^m$ by $\Pi(x) =(f_1(x),\ldots,f_m(x))$. It is clear that
$\Pi$ is a morphism and that $\ker \Pi = H$. By theorem \ref{direct}, $\Pi(F)$ is a 
$q$-variety that will be denoted by $F/H$ 
(note that this $F/H$ is isomorphic to the standard one as an $\Fq$-vector space). 
We now write $\Pi$ for 
$\Pi_{|F} : F \longrightarrow F/H$ for simplicity. 

Let $\psi : F \longrightarrow G\subset K^r$ be a morphism 
with $\psi_{|H} =0$.
By definition, there exists $g_1,\ldots,g_r \in 
\Lambda_n$ such that for all $x\in F$, $\psi(x) = (g_1(x),\ldots,g_r(x))$.
The condition  $\psi_{|H} =0$ means that $g_i \in M(H)$ for all $1\leq i \leq r$. Hence there 
exists $a_{i,j} \in \kt$ such that $g_i = \sum_{j=1}^m a_{i,j}f_j$. Let us define the morphism
$\overline{\psi} :  K^m \longrightarrow K^r$ by 
$$\overline{\psi}(x_1,\ldots,x_m) = (\sum_{j=1}^m a_{1,j}(x_j),\ldots,\sum_{j=1}^m a_{r,j}(x_j)).$$
By construction, for all $x\in F$, we have $\psi(x) = \overline{\psi}(\Pi(x))$. It implies that 
$\overline{\psi}_{|F/H}$ is a morphism from $F/H$ to $G$, proving the existence of $\overline{\psi}$. 

Since $\Pi$ is surjective, the map $\overline{\psi}$ is unique.

\end{proof}

\begin{remark} \label{exactsequence} Let $H\subset F$ be $q$-varieties, we still denote 
$M(H) = \{f\in K\{F\} \mid \forall x \in H, f(x) = 0\}$. By construction $K\{H\} = K\{F\}/M(H)$
and $K\{F/H\}\simeq M(H)$.
In other words, the following 
sequence of $\kt$-modules is exact
$$0\rightarrow K\{F/H\} \rightarrow K\{F\}\rightarrow K\{H\}\rightarrow 0.$$
\end{remark}

\begin{remark}
Let $\psi : F \longrightarrow H$ be a morphism of $q$-varieties. The previous proposition shows
that $\psi$ induces a bijective morphism $\overline{\psi}$
from $F/\ker \psi$ to $\psi(F)$. Allthough this 
morphism is a bijection, it is not necessarily an isomorphism since the reciprocal bijection
might not be a morphism of $q$-varieties, take $\psi(x) = x^q$ for instance.
\end{remark}

This leads to the following definition :

\begin{defi}\label{separ}
Let $\psi : F \longrightarrow H$ be a morphism of $q$-varieties. We say that $\psi$ is separable if 
the bijective morphism $\overline{\psi} : F/\ker \psi \longrightarrow \psi(F)$ is an isomorphism.
\end{defi}

\section{Irreducible $q$-Varieties and dimension}

\begin{defi} \label{irred}
Let $F$ be a $q$-variety. It is said to be irreducible if the only
sub-$q$-variety of finite index is $F$ itself.
\end{defi}

\begin{example}
\begin{enumerate}
\item It is clear that $\{0\}$ is irreducible.
\item The sub-$q$-varieties of $K$ are finite or equal to $K$, hence $K$ is irreducible 
since $K$ is infinite.
\end{enumerate}
\end{example}

\begin{proposition}\label{sommeimageirred}
\begin{enumerate}
\item Let $F$ be an irreducible $q$-variety and $\psi : F \longrightarrow G$ be a morphism, 
then $\psi(F)$ is irreducible.
\item Let $F_1\subset K^n$ and $F_2\subset K^n$ be irreducible $q$-varieties, then $F_1+F_2$
is irreducible. It implies that $K^n$ is irreducible.
\end{enumerate}
\end{proposition}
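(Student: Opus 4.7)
The plan is to reduce each claim to the irreducibility hypothesis by transferring a finite-index sub-$q$-variety of the target back along a suitable $\Fq$-linear map, using the results on $q$-varieties already established.

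For part~(1), let $H \subset \psi(F)$ be a sub-$q$-variety of finite index. By the proposition on preimages of $q$-varieties (stated just after Theorem~\ref{fonct}), $\psi^{-1}(H)$ is a $q$-variety in $F$. The morphism $\psi$ induces a natural $\Fq$-linear map $F/\psi^{-1}(H) \to \psi(F)/H$ sending $x + \psi^{-1}(H)$ to $\psi(x) + H$; this is well-defined, injective (since $\psi(x) \in H$ forces $x \in \psi^{-1}(H)$), and surjective (since $\psi$ hits every element of $\psi(F)$). Hence $F/\psi^{-1}(H)$ is finite, so irreducibility of $F$ gives $\psi^{-1}(H) = F$, and therefore $H = \psi(\psi^{-1}(H)) = \psi(F)$ (using $H \subset \psi(F)$).

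For part~(2), let $H \subset F_1 + F_2$ be a sub-$q$-variety of finite index. By Proposition~\ref{inter}, $F_1 \cap H$ is a $q$-variety in $F_1$. The composition $F_1 \hookrightarrow F_1 + F_2 \twoheadrightarrow (F_1+F_2)/H$ has kernel $F_1 \cap H$, yielding an $\Fq$-linear injection $F_1/(F_1 \cap H) \hookrightarrow (F_1+F_2)/H$. Since the right-hand side is finite, so is the left, and irreducibility of $F_1$ forces $F_1 \cap H = F_1$, i.e.\ $F_1 \subset H$. Symmetrically $F_2 \subset H$, whence $H = F_1 + F_2$. For the concluding statement, the coordinate line $L_i := \{0\}\times\cdots\times K\times\cdots\times\{0\}\subset K^n$ is the image of $K$ under the obvious embedding morphism, so by part~(1) and the earlier observation that $K$ is irreducible, each $L_i$ is irreducible; iterating part~(2) on $K^n = L_1 + \cdots + L_n$ yields the irreducibility of $K^n$.

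No real obstacle is anticipated: everything reduces to transporting the finite-index condition through the standard isomorphism theorems for $\Fq$-vector spaces, once the preceding structural results (preimages, intersections and images of $q$-varieties are again $q$-varieties) are invoked.
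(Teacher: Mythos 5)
Your proof is correct and follows essentially the same route as the paper: in (1) you transfer the finite-index condition through the $\Fq$-linear bijection $F/\psi^{-1}(H)\to\psi(F)/H$, and in (2) through the injection $F_1/(F_1\cap H)\hookrightarrow(F_1+F_2)/H$, exactly as the paper does. Your explicit justification that $\psi^{-1}(H)$ and $F_1\cap H$ are $q$-varieties, and the coordinate-line argument for the irreducibility of $K^n$, are details the paper leaves implicit but are entirely in the same spirit.
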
 

\begin{proof}
\begin{enumerate}
\item Let $H\subset \psi(F)$ be a $q$-variety such that $\psi(F)/H$ is finite. 
Since the induced map $\psi : F/\psi^{-1}(H) \longrightarrow \psi(F)/H$ is an isomorphism of
$\Fq$-vector spaces, $F/\psi^{-1}(H)$ is finite. Hence $\psi^{-1}(H) = F$, and $H = \psi(F)$.
\item Let $H\subset F_1+F_2$ be a $q$-variety such that $(F_1+F_2)/H$ is finite. Since the
canonical map $F_1/(F_1\cap H) \longrightarrow (F_1+F_2)/H$ is injective, $F_1/(F_1\cap H)$ is 
finite. Hence $F_1 = F_1\cap H$, that is $F_1\subset H$. By symmetry, 
we also have $F_2\subset H$. It follows that $F_1+F_2 \subset H$.
\end{enumerate}
\end{proof}

\begin{proposition}\label{sanstors}
Let $F$ be a $q$-variety and $K\{F\}$ be the $\kt$-module of $\Fq$-linear functions on $F$. 
Then the following 
properties are equivalent
\begin{enumerate}
\item $F$ is irreducible.
\item $F$ is isomorphic to $K^m$ for some $m$.
\item $K\{F\}$ is a free $\kt$-module.
\item $K\{F\}$ is a torsion free $\kt$-module.
\end{enumerate}
\end{proposition}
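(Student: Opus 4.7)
The plan is to prove the cycle $(1) \Rightarrow (4) \Rightarrow (3) \Rightarrow (2) \Rightarrow (1)$; the implication $(3) \Rightarrow (4)$ is trivial.

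For $(1) \Rightarrow (4)$, I argue by contraposition. Suppose $K\{F\}$ has a nonzero torsion element $\bar f$ with $P\bar f = 0$ for some $P \in \kt \setminus \{0\}$. Lifting to $f \in \lan$, the morphism $f : F \to K$ is not identically zero on $F$, yet $P \circ f \equiv 0$, so $f(F) \subset \ker P$, a finite subset of $K$. Then $H := f^{-1}(\{0\})$ is a sub-$q$-variety of $F$ (preimage of the $q$-variety $\{0\}$ under the morphism $f$), strictly smaller than $F$ since $f \not\equiv 0$, while $F/H$ injects into the finite group $f(F)$. This contradicts irreducibility.

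For $(4) \Rightarrow (3)$, the main algebraic step, I use Lemma \ref{diag}. Since $\lan$ is Noetherian over $\kt$, $M(F)$ is finitely generated, so $K\{F\}$ is presented by some $m \times n$ matrix $L$ over $\kt$. Lemma \ref{diag} supplies $U \in \gl_m(\kt)$ and $V \in \gl_n(\kt)$ making $ULV$ diagonal, whence $K\{F\} \simeq \bigoplus_i \kt/\kt D_i$. For each $D_i \neq 0$ the quotient $\kt/\kt D_i$ is finite-dimensional over $K$ of dimension $\deg_\tau D_i$ (by right Euclidean division), so for any representative $\bar P$ the powers $\tau^k \bar P$ are $K$-linearly dependent, producing a nonzero $Q \in \kt$ with $Q\bar P = 0$. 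Hence every element of such a summand is torsion, and the torsion-freeness hypothesis forces each $D_i = 0$, so $K\{F\}$ is free.

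For $(3) \Rightarrow (2)$, recall $K\{K^m\} = \Lambda_m$ is free of rank $m$ over $\kt$. Given a $\kt$-isomorphism $K\{F\} \simeq \kt^m \simeq \Lambda_m$, Theorem \ref{fonct} produces a morphism $\psi : F \to K^m$ and its inverse produces $\phi : K^m \to F$. The contravariant functoriality in Theorem \ref{fonct} transports the identities on the function-module side into $\phi \circ \psi = \Id_F$ and $\psi \circ \phi = \Id_{K^m}$, so $\psi$ is an isomorphism of $q$-varieties. For $(2) \Rightarrow (1)$, let $H \subset K^m$ be a sub-$q$-variety with $K^m/H$ finite. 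By the normal-form argument inside the proof of Lemma \ref{null}, after an automorphism of $K^m$ we have $H = \ker P_1 \times \ldots \times \ker P_r \times K^{m-r}$ with nonzero $P_i \in \kt$; since $K$ is algebraically closed each $P_i : K \to K$ is surjective, so $K^m / H \simeq K^r$, which is finite only when $r = 0$, i.e.\ $H = K^m$.

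The main obstacle is $(4) \Rightarrow (3)$, which is where the non-commutative Euclidean structure of $\kt$ enters in an essential way; everything else is either a short reduction or a direct calculation. The step $(3) \Rightarrow (2)$ looks cosmetic but is conceptually the payoff of Theorem \ref{fonct}, as it is where an isomorphism of $\kt$-modules is upgraded to an isomorphism of $q$-varieties.
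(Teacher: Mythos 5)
Your proof is correct, but it runs the equivalences in the opposite direction from the paper and distributes the work differently. The paper applies Lemma \ref{diag} once, to $M(F)$ itself (as in the proof of Lemma \ref{null}), to get the normal form $F=\ker P_1\times\ldots\times\ker P_r\times K^{n-r}$, and then reads everything off that single picture: irreducibility forces the finite factors to vanish ($(1)\Rightarrow(2)$), $K\{K^m\}=\Lambda_m$ gives $(2)\Rightarrow(3)$, $(3)\Rightarrow(4)$ is trivial, and torsion-freeness applied to the relations $P_i\cdot X_i\equiv 0$ in $\lan/M(F)$ gives $(4)\Rightarrow(1)$ directly. You instead prove $(1)\Rightarrow(4)$ intrinsically (the kernel of a torsion function is a proper finite-index sub-$q$-variety — a coordinate-free argument the paper never needs), use Lemma \ref{diag} on a presentation of $K\{F\}$ to get a Smith-type decomposition $\bigoplus_i \kt/\kt D_i$ for $(4)\Rightarrow(3)$, and then — the genuinely different step — upgrade the $\kt$-module isomorphism $K\{F\}\simeq\Lambda_m$ to an isomorphism of $q$-varieties via the functoriality (full faithfulness) of Theorem \ref{fonct}, whereas the paper only ever uses the easy direction of that correspondence ($F\simeq K^m$ implies $K\{F\}$ free). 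Your route buys a cleaner conceptual picture (module-theoretic statements transfer back to $q$-varieties through the anti-equivalence), at the price of leaning on the functorial content of Theorem \ref{fonct}, which the paper states but verifies only in outline; the paper's route is more computational but entirely self-contained in the normal form. Two small points to tidy, neither a real gap: in $(4)\Rightarrow(3)$, torsion-freeness forces each $D_i$ to be zero \emph{or a unit} (a unit gives a zero summand, so freeness still follows); and in $(2)\Rightarrow(1)$ add one word transporting irreducibility of $K^m$ back to $F$ across the isomorphism, since an isomorphism carries finite-index sub-$q$-varieties to finite-index sub-$q$-varieties.
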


\begin{proof} We can suppose, as in proof 
of lemma \ref{null}, that 
$$M(F) = \kt P_1(X_1)+\ldots+\kt P_r(X_r)$$
for some $P_1,\ldots,P_r \in \kt\setminus \{0\}$. It follows that 
$$F = \ker P_1\times\ldots\times\ker P_r\times K^{n-r}.$$

Suppose that $F$ is irreducible. The $q$-variety 
$\{0\}\times\ldots\times\{0\}\times K^{n-r}$ is clearly of finite index in $F$, so it must be 
equal to $F$, hence $F = \{0\}\times\ldots\times\{0\}\times K^{n-r}$ is isomorphic to $K^{n-r}$.

Suppose that $F$ is isomorphic to $K^m$. Then, by theorem \ref{fonct}, 
$K\{F\}$ is isomorphic to $K\{K^m\} = \Lambda_m$. Hence
$K\{F\}$ is free.

Suppose that $K\{F\}$ is free, then, trivially, $K\{F\}$ is torsion free.

Suppose that $K\{F\} = \lan/M(F)$ is torsion free. For $1\leq i\leq r$, we have
$P_i\{\tau\}.X_i \equiv 0$ in $\lan/M(F)$, 
hence $X_i\in M(F)$. It implies immediatly that $P_i = X_i$
and $F =\{0\}\times\ldots\times\{0\}\times K^{n-r}$ which is isomorphic to the irreducible
$q$-variety $ K^{n-r}$.
\end{proof}

\begin{lemma}\label{compo}
Let $F$ be a
$q$-variety. Then there exists 
a necessarily unique irreducible sub-$q$-variety which is maximal for inclusion. 
It is denoted by $\mathring{F}$ and is called the irreducible component of $F$. 
Furthermore, $F/\mathring{F}$ is finite and $\mathring{F}$ is the only
irreducible $q$-variety satisfying such property.
\end{lemma}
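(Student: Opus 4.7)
The plan is to reduce to the standard form for $q$-varieties supplied by the proof of Lemma \ref{null}: after a suitable change of variables, we may assume that $F = \ker P_1 \times \cdots \times \ker P_r \times K^{n-r}$ for some $P_1,\ldots,P_r \in \kt \setminus \{0\}$. The natural candidate is then $\mathring{F} := \{0\} \times \cdots \times \{0\} \times K^{n-r}$, which is a $q$-variety isomorphic to $K^{n-r}$, hence irreducible either by proposition \ref{sanstors} or by iterating proposition \ref{sommeimageirred}(2), and the quotient $F/\mathring{F}$ is a product of the finite sets $\ker P_i$, hence finite.

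The heart of the argument is maximality: I will show that every irreducible sub-$q$-variety $G \subset F$ is already contained in $\mathring{F}$. The key preliminary observation, which I would record as a short lemma, is that the only finite irreducible $q$-variety is $\{0\}$; this is immediate from definition \ref{irred}, since in a finite $q$-variety $F'$ every sub-$q$-variety has finite index, so irreducibility forces $F'$ to have no proper sub-$q$-variety. Given this, for each $1 \leq i \leq r$ the $i$-th coordinate morphism $\pi_i : F \longrightarrow \ker P_i$ sends $G$ to an irreducible sub-$q$-variety of $\ker P_i$ by proposition \ref{sommeimageirred}(1); since $\ker P_i$ is finite, $\pi_i(G) = \{0\}$. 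Hence $G \subset \mathring{F}$, which shows that $\mathring{F}$ is not just maximal but the maximum of the irreducible sub-$q$-varieties, giving at once its existence and uniqueness.

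For the final clause, let $H \subset F$ be any irreducible sub-$q$-variety with $F/H$ finite. Maximality yields $H \subset \mathring{F}$, the inclusion $\mathring{F} \hookrightarrow F$ induces an $\Fq$-linear injection $\mathring{F}/H \hookrightarrow F/H$ so that $\mathring{F}/H$ is finite, and irreducibility of $\mathring{F}$ via definition \ref{irred} then forces $H = \mathring{F}$.

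The only point requiring some care is the invariance of $\mathring{F}$ under the choice of normal form provided by Lemma \ref{null}, but this is automatic: once maximality is proved, $\mathring{F}$ is characterized intrinsically as the largest irreducible sub-$q$-variety. I expect the main obstacle, such as it is, to be isolating the correct side lemma about finite irreducible $q$-varieties; everything else is linear algebra over the normal form together with appeals to propositions \ref{sommeimageirred} and \ref{sanstors}.
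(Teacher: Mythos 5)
Your proof is correct, and its skeleton coincides with the paper's: the same reduction to the normal form $F \cong F_1\times\cdots\times F_r\times K^{n-r}$ with the $F_i = \ker P_i$ finite, the same candidate $\mathring{F} = \{0\}\times\cdots\times\{0\}\times K^{n-r}$, and the same way of settling the final clause. Where you diverge is in the maximality step. The paper argues in one line: for an irreducible $G\subset F$, the canonical injection $G/(G\cap\mathring{F})\hookrightarrow F/\mathring{F}$ shows $G/(G\cap\mathring{F})$ is finite (here $G\cap\mathring{F}$ is a $q$-variety by Proposition \ref{inter}), so Definition \ref{irred} forces $G = G\cap\mathring{F}$, i.e.\ $G\subset\mathring{F}$. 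You instead project $G$ onto each finite factor $\ker P_i$, invoke Theorem \ref{direct} and Proposition \ref{sommeimageirred}(1) to see that $\pi_i(G)$ is a finite irreducible $q$-variety, and use your side lemma that the only finite irreducible $q$-variety is $\{0\}$ (correct, since $\{0\}$ is always a sub-$q$-variety of finite index in a finite $q$-variety). Both arguments are sound and prove the stronger statement that $\mathring{F}$ is the maximum, not merely a maximal element; the paper's route is slightly more economical, needing neither the image theorem nor the side lemma, while your side lemma is a clean, reusable observation and your closing remark about the intrinsic characterization correctly disposes of the apparent dependence on the choice of normal form.
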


\begin{proof} As in proof of proposition \ref{sanstors}, we can suppose, up to an automorphism 
of $K^n$, that $F = F_1\times\ldots\times F_r\times K^{n-r}$ with the $F_i$ being finite 
$\Fq$-vector spaces. Let $H = \{0\}\times\ldots\times\{0\}\times K^{n-r}$. It is an irreducible
sub-$q$-variety of finite index in $F$.

Now let $G\subset F$ be an irreducible $q$-variety. Since $F/H$ is finite, $G/(G\cap H)$ is 
also finite. But $G$ is irreducible, so $G = G\cap H$, proving that $G\subset H$.
\end{proof}

\begin{lemma}\label{imagecomp}
Let $F$ and $H$ be $q$-varieties and $\psi : F \longrightarrow H$ be a morphism. Then
$$\psi(\mathring{F}) = \mathring{\psi(F)}.$$
\end{lemma}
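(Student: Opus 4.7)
The plan is to identify $\psi(\mathring{F})$ as an irreducible sub-$q$-variety of $\psi(F)$ of finite index, and then invoke the uniqueness statement of Lemma \ref{compo} to conclude.

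First I would verify that $\psi(\mathring{F})$ is itself a $q$-variety: this follows immediately from Theorem \ref{direct} applied to the restriction $\psi_{|\mathring{F}} : \mathring{F} \longrightarrow \psi(F)$. Next, since $\mathring{F}$ is irreducible by Lemma \ref{compo}, part (1) of Proposition \ref{sommeimageirred} shows that $\psi(\mathring{F})$ is irreducible as well. So $\psi(\mathring{F})$ is an irreducible sub-$q$-variety of $\psi(F)$.

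The key step is then to check that $\psi(F)/\psi(\mathring{F})$ is finite. For this, the map $x \mapsto \psi(x)$ induces a well-defined $\Fq$-linear map
$$\overline{\psi} : F/\mathring{F} \longrightarrow \psi(F)/\psi(\mathring{F}),$$
which is surjective since $\psi : F \longrightarrow \psi(F)$ is. Because $F/\mathring{F}$ is finite by Lemma \ref{compo}, so is the quotient $\psi(F)/\psi(\mathring{F})$.

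At this point, $\psi(\mathring{F})$ is an irreducible sub-$q$-variety of $\psi(F)$ of finite index, and Lemma \ref{compo} asserts that such a $q$-variety is unique, namely $\mathring{\psi(F)}$. Hence $\psi(\mathring{F}) = \mathring{\psi(F)}$. I do not anticipate a serious obstacle here; the only small point that requires care is the well-definedness and surjectivity of $\overline{\psi}$, and even that is immediate once one observes that $\psi(\mathring{F}) \subset \psi(F)$ is the image of $\mathring{F}$ under the surjection $\psi : F \longrightarrow \psi(F)$.
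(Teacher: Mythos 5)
Your proof is correct and follows essentially the same route as the paper: show $\psi(\mathring{F})$ is irreducible via Proposition \ref{sommeimageirred}, note it has finite index in $\psi(F)$ because $\mathring{F}$ has finite index in $F$, and conclude by the uniqueness clause of Lemma \ref{compo}. Your extra remarks (that $\psi(\mathring{F})$ is a $q$-variety by Theorem \ref{direct} and the explicit surjection $F/\mathring{F}\rightarrow \psi(F)/\psi(\mathring{F})$) just spell out details the paper leaves implicit.
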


\begin{proof}
By proposition \ref{sommeimageirred}, $\psi(\mathring{F})$ is irreducible. Since $\mathring{F}$ has
finite index in $F$, $\psi(\mathring{F})$ has finite index in $\psi(F)$, proving the lemma.
\end{proof}

\begin{defi}
Let $F$ be a $q$-variety. If $F_0\subsetneq F_1 \subsetneq\ldots \subsetneq F_m \subset F$ 
is a chain of irreducible $q$-varieties, the integer $m$ is called the length of the chain.
\end{defi}

\begin{theorem}\label{dimen}
Let $F\subset K^n$ be a $q$-variety. Then,
\begin{enumerate}
\item All chains included in $F$ have length less than $n$.
\item All maximal chains included in $F$  have the same length.
\end{enumerate}
We will define $\dim F$ to be the maximal length of a chain included in $F$. For example,
we have $\dim K^n = n$.
\end{theorem}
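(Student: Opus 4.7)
The plan is to attach to each irreducible $q$-variety $G$ the integer $r(G):=\rank_{\kt} K\{G\}$, well defined because $K\{G\}$ is free by Proposition~\ref{sanstors} and $\kt$ has invariant basis number (a consequence of Lemma~\ref{diag}). The two assertions will follow by showing that chains of irreducibles $F_0\subsetneq\cdots\subsetneq F_m$ in $F$ correspond to strictly increasing sequences of integers bounded by $r(\mathring F)\le n$, and that any gap of size at least~$2$ admits a refinement. Setting $\dim F:=r(\mathring F)$ will then give both statements, and in particular $\dim K^n=n$.

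For strict monotonicity, if $G\subsetneq G'$ are irreducible, the quotient $G'/G$ exists by Proposition~\ref{quotient} and is irreducible by Proposition~\ref{sommeimageirred}(1), since it is the image of the surjection $\Pi$. Remark~\ref{exactsequence} produces the short exact sequence
$$0\longrightarrow K\{G'/G\}\longrightarrow K\{G'\}\longrightarrow K\{G\}\longrightarrow 0,$$
which splits because $K\{G\}$ is free; hence $r(G')=r(G)+r(G'/G)$. Moreover $G'/G\ne\{0\}$ as a group quotient, so at least one coordinate function survives in $K\{G'/G\}$, giving $r(G'/G)\ge 1$ and $r(G)<r(G')$. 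For the bound, any irreducible sub-$q$-variety $G\subset K^n$ admits a surjection $\lan\twoheadrightarrow K\{G\}$ via Remark~\ref{exactsequence} applied to $G\subset K^n$, so $r(G)\le n$. Lemma~\ref{compo} places every irreducible sub-$q$-variety of $F$ inside $\mathring F$, which immediately proves part~(1).

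For part~(2), a maximal chain must start with $\{0\}$ and end with $\mathring F$, as otherwise one can prepend $\{0\}$ or append $\mathring F$. Suppose $r(F_{i+1})-r(F_i)=s\ge 2$ for some~$i$; identify $F_{i+1}$ with $K^N$ where $N=r(F_{i+1})$ (Proposition~\ref{sanstors}) and apply to $F_i\subset K^N$ the normal form from the proof of Lemma~\ref{null}. Combined with the torsion-free criterion of Proposition~\ref{sanstors}, this exhibits an automorphism of $K^N$ after which $F_i=\{0\}^s\times K^{N-s}$. Then $H:=\{0\}^{s-1}\times K^{N-s+1}$ is an irreducible $q$-variety with $F_i\subsetneq H\subsetneq F_{i+1}$, contradicting maximality. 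Consequently every maximal chain has consecutive rank jumps equal to~$1$ and total length $r(\mathring F)$.

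The main obstacle is the refinement step: one must extract from the proof of Lemma~\ref{null} that every irreducible sub-$q$-variety of $K^N$ becomes a coordinate subspace after an automorphism of $K^N$. Monotonicity and the upper bound, by contrast, follow directly from the exact sequence of Remark~\ref{exactsequence} and the rank theory of free modules over the two-sided PID $\kt$.
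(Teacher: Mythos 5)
Your argument is correct, but it is organized differently from the paper's. The paper reduces at once to $F=K^n$ (via Proposition \ref{sanstors}) and then runs an induction on $n$, analyzing the top two terms of a chain; the single structural input is that an irreducible sub-$q$-variety of $K^{n+1}$ is, up to an automorphism, a coordinate subspace $\{0\}^r\times K^{n+1-r}$, and maximality forces $r=1$. You instead introduce the invariant $r(G)=\rank_{\kt}K\{G\}$ on irreducibles and prove strict monotonicity along chains from the split exact sequence of Remark \ref{exactsequence} (the splitting because $K\{G\}$ is free, additivity of ranks from invariance of basis number for $\kt$, which is legitimate since $\kt$ is a left/right Euclidean domain embedding in its skew fraction field), obtaining the bound $m\le n$ without induction; for equality of maximal chain lengths you use the same normal-form fact as the paper to refine any rank jump $\ge 2$, which is exactly the paper's ``the number of zeros must be one'' step, but phrased as an insertion argument rather than as the inductive step. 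What your route buys is that it proves Corollary \ref{corodim} ($\dim F=\rank_{\kt}K\{F\}$) simultaneously with the theorem, and it isolates the only genuinely geometric ingredient (the coordinate-subspace form of irreducibles, available from the proofs of Lemma \ref{null}, Proposition \ref{sanstors} and Lemma \ref{compo}, so there is no circularity with results stated after Theorem \ref{dimen}); the cost is the extra module-theoretic bookkeeping (splitting, freeness of $K\{G'/G\}$ via irreducibility of the quotient, IBN), which the paper's induction avoids entirely. Two small points you should state explicitly if you write this up: that $K\{G'/G\}\ne 0$ because $G'/G\ne\{0\}$ and $Z(\Lambda_m)=\{0\}$, and that maximal chains exist (lengths are bounded by part (1)), so that part (2) indeed yields $\dim F=r(\mathring F)$ and $\dim K^n=n$.
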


\begin{proof} Without loss of generality, we can replace $F$ by $\mathring{F}$ and 
suppose that $F$ is irreducible. 
It follows from proposition \ref{sanstors} that $F$ is isomorphic to some $K^{m}$. So it is
sufficient to prove the theorem for $F = K^n$. We will do it by induction on $n$, 
including the property that $\dim K^n = n$. 
It is obviously true
for $n = 0$ and $n =1$ since the only irreducible are $\{0\}$ in the first case and 
$\{0\}$ and $K$ in the second one.

We suppose that the theorem is true up to $n$. Now let 
$F_0\subsetneq F_1 \subsetneq\ldots \subsetneq F_m \subset K^{n+1}$ 
be a chain of irreducible $q$-varieties. If $m = 0$, $m\leq n+1$.
If $m > 0$, $F_{m-1}$is an irreducible $q$-variety, so, up to an automorphism of $K^{n+1}$,
$F_{m-1} = \{0\}\times\ldots\times\{0\}\times K^{n+1-r}$. Since 
$F_{m-1} \not = K^{n+1}$, $r \not = 0$, hence we can apply the induction hypothesis to 
$F_{m-1} \simeq K^{n+1-r}$ :
$m-1 \leq n+1-r$. It implies $m \leq n+1$.

Suppose now that $F_0\subsetneq F_1 \subsetneq\ldots \subsetneq F_m \subset K^{n+1}$ is a maximal
chain (for example a chain of maximal length). 
Since $K^{n+1}$ is irreducible, we must have $F_m = K^{n+1}$. Up to an automorphism, 
one can suppose that $F_{m-1}=\{0\}\times\ldots\times\{0\}\times K^{n+1-r}$. The number of zeros
is the product must be equal to $1$, otherwise we could replace one of them by $K$
 to get an extra
irreducible in the chain which is supposed to be maximal. Hence $F_{m-1} = \{0\}\times K^n$.
But $F_0\subsetneq F_1 \subsetneq\ldots \subsetneq F_{m-1}$ is a maximal chain. By induction, 
its length is $\dim K^n = n$. So $m-1 = n$, that is $m = n+1$, proving that all maximal chains 
have the same length and that $\dim K^{n+1} = n+1$.
\end{proof}

\begin{corollary}\label{corodim} Let $F$ be a $q$-variety. Then
$$\dim F = \rank_{\kt} K\{F\}.$$
\end{corollary}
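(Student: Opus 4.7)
The plan is to reduce $F$ to the normal form $\ker P_1 \times \cdots \times \ker P_r \times K^{n-r}$ that already appears in the proof of Lemma \ref{null}, and then compute both invariants directly.

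First I would apply Lemma \ref{diag} to a matrix of generators of $M(F)$, exactly as in the proof of Lemma \ref{null}. This yields, up to an automorphism of $K^n$ (which affects neither $\dim F$ nor the isomorphism class of the $\kt$-module $K\{F\}$), the presentation
$$M(F) = \kt P_1(X_1) + \cdots + \kt P_r(X_r), \qquad F = \ker P_1 \times \cdots \times \ker P_r \times K^{n-r},$$
with each $P_i \in \kt \setminus \{0\}$.

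For the dimension side, each $\ker P_i$ is a finite $\Fq$-vector space, so $H := \{0\}\times\cdots\times\{0\}\times K^{n-r}$ is a sub-$q$-variety of finite index in $F$. Since $H \simeq K^{n-r}$ is irreducible by Proposition \ref{sanstors}, Lemma \ref{compo} forces $H = \mathring F$, and Theorem \ref{dimen} gives $\dim F = \dim \mathring F = \dim K^{n-r} = n-r$.

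For the rank side, the normal form of $M(F)$ yields a direct sum decomposition
$$K\{F\} = \lan/M(F) \simeq \bigoplus_{i=1}^{r} \kt/\kt P_i \ \oplus\ \kt^{\,n-r}.$$
Each $\kt/\kt P_i$ is a torsion $\kt$-module (its generator is annihilated by the nonzero element $P_i$), so it vanishes after tensoring with the Ore field of fractions of $\kt$, while the free summand $\kt^{\,n-r}$ contributes rank $n-r$. Hence $\rank_{\kt} K\{F\} = n - r = \dim F$.

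The main obstacle is essentially bookkeeping, since the nontrivial content is already encoded in the normal form of Lemma \ref{null} and in Proposition \ref{sanstors}. The one additional point to check is that rank over the non-commutative Ore domain $\kt$ behaves as expected, i.e.\ that it is additive on short exact sequences and trivial on torsion modules; this is standard via the Ore localisation and ensures that the above decomposition gives exactly $n-r$.
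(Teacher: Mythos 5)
Your argument is correct, but it takes a more computational route than the paper. The paper first reduces to the irreducible case structurally: it uses the exact sequence of Remark \ref{exactsequence}, $0\rightarrow K\{F/\mathring{F}\}\rightarrow K\{F\}\rightarrow K\{\mathring{F}\}\rightarrow 0$, notes that $K\{F/\mathring{F}\}$ is finite-dimensional over $K$ hence of $\kt$-rank zero, invokes additivity of $\rank_{\kt}$ along the sequence, and then applies Proposition \ref{sanstors} to replace $F$ by some $K^m$, where both sides equal $m$ by Theorem \ref{dimen}. You instead go straight to the diagonal normal form $M(F)=\kt P_1(X_1)+\cdots+\kt P_r(X_r)$ from the proof of Lemma \ref{null} and read off both invariants at once, obtaining the explicit decomposition $K\{F\}\simeq\bigoplus_{i=1}^{r}\kt/\kt P_i\oplus\kt^{\,n-r}$ together with $\mathring{F}\simeq K^{n-r}$ via Lemma \ref{compo}. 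What your version buys is self-containedness and extra information: you only need the rank of a direct sum (torsion summands die, the free summand gives $n-r$), and you exhibit the torsion part of $K\{F\}$ explicitly; the cost is redoing the normal-form reduction that the paper has already packaged into Proposition \ref{sanstors}. What the paper's version buys is reuse of already-proved structural statements and a proof that works without re-inspecting generators of $M(F)$, at the price of needing additivity of $\rank_{\kt}$ over a short exact sequence rather than just over a direct sum. Both proofs rest on the same standard (and in the paper equally unproved) facts about rank over the Ore domain $\kt$, which you correctly flag; your attribution of $\dim F=\dim\mathring{F}$ to Theorem \ref{dimen} is really the observation, from Lemma \ref{compo}, that every irreducible sub-$q$-variety of $F$ lies in $\mathring{F}$, so chains in $F$ and in $\mathring{F}$ coincide — a point the paper also uses without comment.
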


\begin{proof}
Let $\mathring{F}$ be the irreducible component of $F$. Hence $F/\mathring{F}$ is finite and 
 the $K$-vector space $K\{F/\mathring{F}\} \subset \hom_{\Fq}(F/\mathring{F},K)$ has finite dimension.
It follows that $\rank_{\kt} K\{F/\mathring{F}\} = 0.$ Now by remark \ref{exactsequence}, 
$$ \rank_{\kt} K\{F\} = \rank_{\kt} K\{\mathring{F}\} + \rank_{\kt} K\{F/\mathring{F}\}.$$
Furthermore, $\dim F = \dim \mathring{F}$. Hence without loss of generality, we can suppose that
$F$ is irreducible.

By proposition \ref{sanstors}, we can suppose that
$F = K^n$. But $K\{K^n\} = \Lambda_n$ and $\dim K^n = n$ by theorem \ref{dimen}.
\end{proof}

\begin{theorem}\label{rang}
Let $F$ and $H$ be $q$-varieties and $\psi : F \longrightarrow H$ be a morphism. Then
$$\dim F = \dim \ker \psi + \dim \psi(F).$$
\end{theorem}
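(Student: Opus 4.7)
The plan is to reduce to the case $F = K^a$, then diagonalize $\psi$ via Lemma \ref{diag} and compute the three dimensions directly.

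\emph{Reduction.} I would first replace $F$ by its irreducible component $\mathring F$. Indeed, $\dim F = \dim \mathring F$ by definition (any chain of irreducibles in $F$ lies in $\mathring F$). By Lemma \ref{imagecomp}, $\psi(\mathring F) = \mathring{\psi(F)}$, so $\dim \psi(F) = \dim \psi(\mathring F)$. For the kernel, $\ker\psi/(\ker\psi \cap \mathring F)$ injects into the finite quotient $F/\mathring F$, so $\ker\psi \cap \mathring F = \ker(\psi|_{\mathring F})$ has finite index in $\ker\psi$; Lemma \ref{compo} then forces the irreducible components of these two $q$-varieties to coincide, so $\dim \ker\psi = \dim(\ker\psi \cap \mathring F)$. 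Using Proposition \ref{sanstors}, we may therefore assume $F = K^a$.

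\emph{Diagonalization.} Embed $H$ in some $K^m$ and encode $\psi : K^a \to K^m$ by an $m \times a$ matrix $L$ with entries in $\kt$. Lemma \ref{diag} yields $U \in \gl_m(\kt)$, $V \in \gl_a(\kt)$ such that $D = ULV$ is diagonal, so $\psi = U^{-1} \circ D \circ V^{-1}$. Since $U^{-1}$ and $V^{-1}$ are isomorphisms of $q$-varieties, they preserve dimensions of kernels and images, so we may replace $\psi$ by $D$. After absorbing a permutation matrix into $U$ and $V$, assume the nonzero entries of $D$ are $P_1,\ldots,P_r \in \kt\setminus\{0\}$, occupying the first $r$ diagonal positions.

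\emph{Computation.} A direct calculation gives $\ker D = \ker P_1 \times \cdots \times \ker P_r \times K^{a-r}$. Each $\ker P_i$ is finite (since $P_i \neq 0$), so $\dim \ker D = a - r$. Conversely, every nonzero polynomial in $\kt$ is surjective on $K$, hence $D(K^a) = K^r \times \{0\}^{m-r}$ has dimension $r$. Summing, $\dim \ker\psi + \dim \psi(F) = (a-r) + r = a = \dim F$. The delicate point is the reduction, specifically the claim that passing to $\mathring F$ preserves $\dim \ker\psi$; everything else is a direct computation from Lemma \ref{diag}.
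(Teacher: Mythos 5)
Your proof is correct, but it follows a genuinely different route from the paper. The paper argues chain-theoretically: it takes a maximal chain of irreducibles $K_0\subsetneq\ldots\subsetneq K_r$ in $\ker\psi$ and a maximal chain $I_0\subsetneq\ldots\subsetneq I_s$ in $\psi(F)$, sets $F_i=\mathring{\psi^{-1}(I_i)}$, and shows that the concatenated chain $K_0\subsetneq\ldots\subsetneq K_r=F_0\subsetneq\ldots\subsetneq F_s$ is a maximal chain in $F$, so the formula follows from Theorem \ref{dimen}(2) (all maximal chains have the same length); no normal form for $\psi$ is needed and no reduction to the irreducible case is made. You instead reduce to $F=K^a$ and put $\psi$ in diagonal form via Lemma \ref{diag}, after which both $\dim\ker\psi=a-r$ and $\dim\psi(F)=r$ are read off directly. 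Your delicate point is exactly where you located it, and you handle it correctly: $\ker\psi/(\ker\psi\cap\mathring F)$ injects into the finite group $F/\mathring F$, so $\mathring{\ker\psi\cap\mathring F}$ is irreducible of finite index in $\ker\psi$ and the uniqueness statement in Lemma \ref{compo} identifies it with $\mathring{\ker\psi}$; together with Lemma \ref{imagecomp} this makes the reduction legitimate. What each approach buys: yours is shorter and more computational, leaning on the same normal-form trick the paper already uses for Theorem \ref{direct} and Proposition \ref{sanstors}, at the cost of the reduction bookkeeping and of invoking isomorphism-invariance of the three dimensions; the paper's proof stays intrinsic, works for arbitrary $F$ at once, and gives slightly more structural information (an explicit maximal chain of $F$ interlacing a chain of $\ker\psi$ with pullbacks of a chain of $\psi(F)$), which is closer in spirit to how the analogous fibre-dimension statement is proved in classical algebraic geometry. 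Incidentally, the paper's own footnote suggests yet a third proof via Corollary \ref{corodim} and module ranks, so your argument is a welcome alternative rather than a rediscovery of the printed one.
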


\begin{remark}
\begin{enumerate}
\item Appliying the theorem to the canonical morphism $\Pi : F \longrightarrow F/H$ gives
$\dim F/H = \dim F -\dim H$ as expected.
\item It implies immediatly that $\dim F/\ker \psi = \dim \psi(F)$.
\end{enumerate}
\end{remark}

\begin{proof}\footnote{It is certainly possible to prove the formula using corollary \ref{corodim} 
and the rank of a module.}  
Let $r = \dim \ker \psi$, $s = \dim \psi(F)$, 
$\{0\} = K_0 \subsetneq K_1\subsetneq \ldots \subsetneq K_r \subset \ker \psi$ 
be a maximal chain of irreducibles and 
$\{0\} = I_0 \subsetneq I_1\subsetneq \ldots \subsetneq I_s \subset  \psi(F)$
be a maximal chain of irreducibles. For $0\leq i\leq s$, set $F_i = \mathring{\psi^{-1}(I_i)}$.
In  particular $F_0 = \mathring{\ker\psi} = K_r$. Furthermore, by lemma \ref{compo}, 
$F_i$ has finite index in $\psi^{-1}(I_i)$, hence $\psi(F_i)$ has finite index in 
$\psi(\psi^{-1}(I_i)) = I_i$ since $I_i \subset \psi(F)$. But $\psi(F_i)$ is irreducible by
proposition \ref{sommeimageirred}, so $\psi(F_i) = \mathring{I_i} = I_i$. It follows that
the $F_i$ are distinct since the $I_i$ are distinct.

Let us consider the chain $\{0\} = K_0 \subsetneq K_1\subsetneq \ldots \subsetneq K_r = F_0
\subsetneq F_1\subsetneq \ldots \subsetneq F_s$. We 
have to prove that it is maximal. The first
part of the chain is maximal by hypothesis. Now, let $G$ be irreducible such that
$F_i\subset G \subset F_{i+1}$ for $0\leq i \leq s-1$. 
It implies that $\psi(F_i)\subset \psi(G) \subset \psi(F_{i+1})$. 
But we have just seen that $\psi(F_i) = I_i$.
By maximality, we must have $\psi(G) = I_i$ or $\psi(G) = I_{i+1}$. 
If $G$ is an irreducible such that $F_s\subset G \subset F$, $I_s = \psi(F_s) \subset
\psi(G)$. But $I_s$ is the irreducible component of $\psi(F)$, hence $\psi(G) \subset I_s$.
So, in any case, we have $\psi(G) = I_i$ for $0\leq i\leq s$. We deduce that
$G+\ker \psi = \psi^{-1}(I_i)$.
Since $K_r = F_0 \subset G$, $G = G+K_r$, so $G$ has finite index in 
$G+\ker \psi = \psi^{-1}(I_i)$. It follows that $G$ is the irreducible component of
$\psi^{-1}(I_i)$, which is $F_i$ by definition.
\end{proof}

\section{Tangent space}

Let $f = \sum_{i=1}^n P_i(X_i)$ be an element of $\lan$. We define $d(f)$ to be the linear part 
of $f$. More precisely, $d(f) = \sum_{i=1}^n a_{0,i}X_i$ with $P_i = \sum_{j\geq 0}a_{j,i}X_i^{q^j}$.
We also have $d(f) = \sum_{i=1}^n \frac{\partial f}{\partial X_i}X_i$.
\begin{defi}\label{tanspa} Let $F\subset K^n$ be a $q$-variety. 
We define the tangent space of $F$, denoted 
by $T(F)$, by
$$T(F) = \bigcap_{f\in M(F)} \ker d(f) = \{(x_1,\ldots,x_n) \mid \forall f\in M(f),
\ d(f)(x_1,\ldots,x_n) = 0\}.$$
Note that $T(F)$ is a sub-$K$-vector space of $K^n$.
\end{defi}

\begin{proposition}\label{tanmap}
Let $F\subset K^n$ and $H\subset K^m$ be $q$-varieties, and  
$\psi : F \longrightarrow H$ be a morphism. Choose $f_1,\ldots,f_m\in \lan$ such that
for $x\in F$, $\psi(x) = (f_1(x),\ldots,f_m(x))$. Then the map $d(\psi)$ defined by
$$
\begin{array}{rcl}
d(\psi) : T(F) &\longrightarrow & T(H)\\
           x   & \mapsto        & (d(f_1)(x),\ldots,d(f_m)(x))
\end{array}
$$
is a well-defined morphism of $K$-vector spaces.
\end{proposition}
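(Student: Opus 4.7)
The plan is to verify three things in turn: that the proposed map does not depend on the choice of the representing polynomials $f_1, \ldots, f_m$ once we restrict its domain to $T(F)$; that its image actually lies inside $T(H)$; and finally that it is $K$-linear. By Theorem \ref{fonct}, two different choices of representatives differ coordinate-by-coordinate by elements of $M(F)$. Since $d$ is additive on $\lan$ (it extracts the coefficient of the degree-one monomial in each variable $X_i$), if $f_j' = f_j + h_j$ with $h_j \in M(F)$ then $d(f_j') - d(f_j) = d(h_j)$, which vanishes on $T(F)$ by Definition \ref{tanspa}. This gives well-definedness on $T(F)$.

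For the image, I would take an arbitrary $g \in M(H)$, write it as $g = \sum_{j=1}^m g_j(Y_j)$ with $g_j \in \kt$, and observe that
\[
g \circ \psi = \sum_{j=1}^m g_j(f_j) \in \lan
\]
vanishes on $F$ because $\psi(F) \subset H$, hence lies in $M(F)$. Therefore $d(g \circ \psi)(x) = 0$ for every $x \in T(F)$. The crux is a chain-rule computation for the linear part: writing $g_j = \sum_{i \geq 0} a_{j,i}\, Y^{q^i}$, only the term $i = 0$ contributes to the linear part of $g_j(f_j) = \sum_i a_{j,i}\, f_j^{q^i}$, because for $i \geq 1$ the polynomial $f_j^{q^i}$ contains only monomials of degree $\geq q$ in $X_1, \ldots, X_n$. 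Hence $d(g_j(f_j)) = a_{j,0}\, d(f_j)$, and summing over $j$ together with $d(g)(y_1, \ldots, y_m) = \sum_j a_{j,0}\, y_j$ yields
\[
d(g)\bigl(d(\psi)(x)\bigr) \;=\; d(g \circ \psi)(x) \;=\; 0.
\]
Since this holds for every $g \in M(H)$, we conclude that $d(\psi)(x) \in T(H)$.

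The $K$-linearity is then immediate, because each coordinate $d(f_j)$ is a $K$-linear form in $X_1, \ldots, X_n$, so the whole map is $K$-linear on $K^n$ and a fortiori on $T(F)$. The only delicate point, and the main obstacle, is the chain-rule identity $d(g_j(f_j)) = a_{j,0}\, d(f_j)$; this is where one crucially uses that the $q$-th power operation raises the degree of any non-constant input to at least $q$, so that higher Frobenius terms contribute nothing linear. Once this is in hand, the rest is routine bookkeeping combining Theorem \ref{fonct} with Definition \ref{tanspa}.
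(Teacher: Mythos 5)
Your proof is correct and follows essentially the same route as the paper: well-definedness via $f_j - f_j' \in M(F)$ and the definition of $T(F)$, then membership in $T(H)$ via $g\circ\psi \in M(H)\circ\psi \subset M(F)$ together with the chain-rule identity for linear parts. The only difference is that you spell out why $d(g_j(f_j)) = a_{j,0}\,d(f_j)$ (higher Frobenius terms produce no linear monomials), whereas the paper simply invokes the chain rule; this is a welcome but inessential elaboration.
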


\begin{proof} If $g_1,\ldots,g_m\in \lan$ are such that
for $x\in F$, $\psi(x) = (g_1(x),\ldots,g_m(x))$. Then for $1\leq i \leq m$, $f_i-g_i\in M(F)$,
hence, by definition, for all $x\in T(F)$, $d(f_i-g_i)(x) = 0$, so $d(f_i)(x) =d(g_i)(x)$, 
proving that $d(\psi)$ does not depend on the choice of the $f_i$.

We still have to prove that $(d(f_1)(x),\ldots,d(f_m)(x))\in T(H)$ for $x\in T(F)$. 
Let $x \in T(F)$
and $g\in M(H)$. By chain rule, 
$$d(g)((d(f_1)(x),\ldots,d(f_m)(x))) = d(g(f_1,\ldots,f_m))(x).$$
But $g(f_1,\ldots,f_m)\in M(F)$ by construction, so
$d(g(f_1,\ldots,f_m))(x) = 0$ by definition
of $T(F)$. It follows that $d(g)((d(f_1)(x),\ldots,d(f_m)(x))) =0$, proving that
$(d(f_1)(x),\ldots,d(f_m)(x))\in T(H)$
\end{proof}

\begin{proposition}\label{tanfonc}
Let $F\subset K^n$and $H\subset K^m$ be $q$-varieties. The map
$$
\begin{array}{rcl}
d : \mor(F,H) &\longrightarrow & \Hom_{K}(T(F),T(H))\\
           \psi  & \mapsto        & d(\psi)
\end{array}
$$
is fonctorial. In particular, it implies that $T(F)$ depends only on the isomorphic
class of $F$.
\end{proposition}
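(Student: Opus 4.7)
The plan is to verify the two functor axioms for $d$, namely $d(\Id_F) = \Id_{T(F)}$ and $d(\phi\circ\psi) = d(\phi)\circ d(\psi)$ for composable morphisms; then the assertion that $T(F)$ is an invariant of the isomorphism class of $F$ follows immediately, since an isomorphism $\psi:F\to H$ with inverse $\chi$ forces $d(\psi)$ and $d(\chi)$ to be mutually inverse $K$-linear maps between $T(F)$ and $T(H)$.

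First I would handle the identity. If $F\subset K^n$, the identity morphism is represented by the polynomials $f_i = X_i$ for $1\leq i \leq n$, which are already linear, so $d(X_i) = X_i$ and the formula of Proposition \ref{tanmap} gives $d(\Id_F)(x) = (X_1(x),\ldots,X_n(x)) = x$ for every $x\in T(F)$.

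The main step is the composition identity. Let $\psi:F\to H$ and $\phi:H\to G$ be morphisms with $F\subset K^n$, $H\subset K^m$, $G\subset K^r$, represented respectively by $f_1,\ldots,f_m\in\lan$ and $g_1,\ldots,g_r\in\Lambda_m$. Writing $g_i = \sum_{j=1}^m P_{i,j}(X_j)$ with $P_{i,j} = \sum_{k\geq 0} a_{i,j,k}\tau^k\in\kt$, the composite $\phi\circ\psi$ is represented by $h_i = g_i(f_1,\ldots,f_m) = \sum_j P_{i,j}(f_j)$. The key chain rule is that for any $P = \sum_k a_k\tau^k\in\kt$ and any $f\in\lan$,
$$d(P(f)) = a_0\, d(f),$$
because for $k\geq 1$ every monomial of $f^{q^k}$ has degree a multiple of $q$ and hence contributes nothing linear. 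Summing over $j$ gives $d(h_i) = \sum_j a_{i,j,0}\, d(f_j)$, which is exactly $d(g_i)$ evaluated at $(d(f_1)(x),\ldots,d(f_m)(x))$; hence $d(\phi\circ\psi)(x) = d(\phi)(d(\psi)(x))$ for all $x\in T(F)$.

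The main obstacle is only bookkeeping in the chain rule above; conceptually it is just the statement that $d$ picks out the $\tau^0$-coefficient of each $\kt$-operator, and composition of morphisms translates into composition of these constant terms. Combined with the opening argument, this gives both functoriality and the invariance of $T(F)$ under isomorphism.
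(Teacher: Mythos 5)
Your argument is correct and is essentially the paper's own proof, which simply invokes the chain rule: your identity $d(P(f)) = a_0\,d(f)$ (extracting the $\tau^0$-coefficient) is exactly the chain rule already used in Proposition \ref{tanmap}, and the deduction of isomorphism-invariance from the two functor axioms matches the intended reading. You have merely written out explicitly what the paper leaves as a one-line remark.
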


\begin{proof}
This is nothing else but chain rule.
\end{proof}

\begin{proposition}\label{dimtan} Let $F$ be a $q$-variety, then $T(\mathring{F}) = T(F)$ and
$$\dim_K T(F) = \dim F.$$
\end{proposition}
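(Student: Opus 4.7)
The strategy is to reduce to a coordinate system where $M(F)$ takes a diagonal normal form, and then compute both tangent spaces explicitly. Since by Proposition~\ref{tanfonc} the tangent space depends only on the isomorphism class, I may apply any automorphism of $K^n$ that is convenient.

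First I apply the diagonalization from the proof of Lemma~\ref{null} to $M(F)$: after an automorphism of $K^n$, there exist nonzero $P_1,\ldots,P_r \in \kt$ such that
$$M(F) = \kt P_1(X_1) \oplus \ldots \oplus \kt P_r(X_r),$$
so that $F = \ker P_1 \times \ldots \times \ker P_r \times K^{n-r}$ and, as in the proof of Proposition~\ref{sanstors}, $\mathring{F} = \{0\}^r \times K^{n-r} \simeq K^{n-r}$, with $M(\mathring{F}) = \kt X_1 \oplus \ldots \oplus \kt X_r$ and $\dim F = \dim \mathring{F} = n - r$.

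The key step is to observe that each $P_i$ must be separable, i.e.\ have nonzero constant coefficient $a_{i,0}$. Indeed, if $P_i = \tau P'_i$ for some $P'_i \in \kt$, then $P_i(X_i) = \tau \cdot P'_i(X_i)$ lies in $M(F)$, and since $M(F)$ is radical we would get $P'_i(X_i) \in M(F)$; the direct-sum decomposition then forces $P'_i(X_i) \in \kt P_i(X_i)$, which is impossible since $\deg_\tau P'_i < \deg_\tau P_i$. This is the main obstacle: if one forgets that $M(F)$ is radical, inseparable $P_i$ would make the linear parts below vanish and inflate the tangent space spuriously.

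Now I compute the tangent spaces directly. Writing $P_i = \sum_k a_{i,k}\tau^k$ with $a_{i,0}\neq 0$, an arbitrary $f = \sum_i Q_i P_i(X_i) \in M(F)$ with $Q_i = \sum_j b_{i,j}\tau^j$ has linear part $d(f) = \sum_{i=1}^r a_{i,0} b_{i,0} X_i$; as the $b_{i,0}$ range over $K$, the set of such linear forms sweeps out exactly $K X_1 \oplus \ldots \oplus K X_r$. The analogous computation for $M(\mathring{F}) = \bigoplus \kt X_i$ yields the same space of linear forms. Therefore
$$T(F) = T(\mathring{F}) = \{x \in K^n \mid x_1 = \ldots = x_r = 0\},$$
which is a $K$-vector space of dimension $n - r = \dim F$, proving both assertions at once.
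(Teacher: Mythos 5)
Your proof is correct and follows essentially the same route as the paper: reduce to the diagonal normal form $M(F)=\kt P_1(X_1)+\ldots+\kt P_r(X_r)$, use radicality of $M(F)$ to see that $d(P_i)\neq 0$, and compute $T(F)=T(\mathring{F})=\{0\}^r\times K^{n-r}$ of dimension $n-r=\dim F$. Your explicit justification that radicality forces each $P_i$ to be separable is exactly the step the paper leaves implicit, so the two arguments coincide in substance.
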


\begin{proof}
We can suppose, up to an automorphism 
of $K^n$, that
$$M(F) = \kt P_1(X_1)+\ldots+\kt P_r(X_r)$$
for some $P_1,\ldots,P_r \in \kt\setminus \{0\}$, so
$F = \ker P_1\times\ldots\times\ker P_r\times K^{n-r}$ and 
$\mathring{F} = \{0\}\times\ldots\times\{0\}\times K^{n-r}$.
Since $M(F)$ is radical, $d(P_i) \not = 0$. It follows immediatly that 
$T(F) := \bigcap_{i=1}^r \ker d(P_i) = \{0\}\times\ldots\times\{0\}\times K^{n-r}$.
\end{proof}

\begin{proposition}\label{surjec}
Let $H\subset F$ be $q$-varieties, then
\begin{enumerate}
\item $T(H) \subset T(F)$.
\item The $K$-linear map $d(\Pi) : T(F) \longrightarrow T(F/H)$ is surjective.
\end{enumerate}
\end{proposition}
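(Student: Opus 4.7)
The first assertion is immediate: since $H\subset F$, any $f\in\lan$ vanishing on $F$ also vanishes on $H$, so $M(F)\subset M(H)$, and therefore
\[
T(H)=\bigcap_{f\in M(H)}\ker d(f)\subset\bigcap_{f\in M(F)}\ker d(f)=T(F).
\]

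For (2), the plan is to interpret $d(\Pi)$ as the $K$-linear transpose of an inclusion on $\tau$-quotients. The bilinear pairing $\lan\times K^n\to K$, $(f,y)\mapsto d(f)(y)$, kills $\tau\lan$ on the first factor and, identifying $\lan/\tau\lan$ with $K^n$ via linear parts, becomes the standard perfect pairing on $K^n$. By Definition \ref{tanspa}, $T(F)$ is the annihilator of the image of $M(F)$ in $\lan/\tau\lan$, yielding a canonical $K$-linear isomorphism
\[
T(F)\simeq \bigl(K\{F\}/\tau K\{F\}\bigr)^{*}
\]
of finite-dimensional $K$-vector spaces of dimension $\dim F$. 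Writing $\Pi(x)=(f_1(x),\dots,f_m(x))$ with $f_1,\dots,f_m$ generating $M(H)$ (Proposition \ref{quotient}), the chain rule $d(g(f_1,\dots,f_m))=\sum_i d(g)_i\, d(f_i)$ shows that $d(\Pi)$ is transpose to the map
\[
\overline{\Pi^{*}}\colon K\{F/H\}/\tau K\{F/H\}\longrightarrow K\{F\}/\tau K\{F\}
\]
induced by pull-back along $\Pi$. Under the identification $K\{F/H\}\simeq M(H)\subset K\{F\}$ of Remark \ref{exactsequence}, $\overline{\Pi^{*}}$ is simply the map on $\tau$-quotients induced by the inclusion of $M(H)$, so $d(\Pi)$ will be surjective once this map is injective.

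The injectivity is the one substantive step. Suppose $f\in M(H)$ satisfies $f=\tau g$ for some $g\in K\{F\}$. Since $M(F)$ is radical, $K\{F\}$ has no $\tau$-torsion (Definition \ref{rad} and the remark following Proposition \ref{nullpart}), so $g$ is uniquely determined. For every $x\in H$ one then has $0=f(x)=g(x)^q$, hence $g(x)=0$, so $g\in M(H)$ and $f\in\tau M(H)$, as required.

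I expect the main obstacle to be setting up the duality $T(F)\simeq (K\{F\}/\tau K\{F\})^{*}$, which the paper has not explicitly recorded; once it is in place, the rest is a one-line check exploiting the radicality of $M(H)$. The same proof can be phrased in coordinates by writing the matrix $L=(d(f_i))$ and lifting any $\zeta\in K^m$ with $\sum_i\zeta_i d(f_i)\in d(M(F))$ to an element of $M(F/H)$ with linear part $\zeta$, the lifting again resting on radicality of $M(H)$.
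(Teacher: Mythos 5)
Your proof of (1) is the paper's, and your proof of (2) is correct but takes a genuinely different route. You set up the canonical duality $T(F)\simeq\bigl(K\{F\}/\tau K\{F\}\bigr)^{*}$ induced by the pairing $(f,y)\mapsto d(f)(y)$ (this is legitimate: since $K$ is perfect, $\tau\lan$ is exactly the kernel of the linear-part map, so $\lan/\tau\lan\simeq K^n$ and $T(F)$ is the annihilator of the image of $M(F)$); via the chain rule you identify $d(\Pi)$ with the dual of the map induced on $\tau$-quotients by $\Pi^{*}\colon K\{F/H\}\simeq M(H)\hookrightarrow K\{F\}$ (Remark \ref{exactsequence}, and $\Pi^{*}$ is $\kt$-linear so it respects $\tau$-quotients); and the whole statement then reduces to $M(H)\cap\tau K\{F\}=\tau M(H)$, which you obtain by taking $q$-th roots of values on $H$. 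This is uniform and coordinate-free, with no case distinction. The paper instead first treats the irreducible case, diagonalizing $M(H)$ by Lemma \ref{diag} so that $\Pi=(P_1,\dots,P_r)$ and $d(\Pi)$ is diagonal with nonzero entries $d(P_i)$ (radicality of $M(H)$), and then reduces the general case to this one through the irreducible component: it needs the auxiliary lemma $\Pi(\mathring{F})=\mathring{F}/(\mathring{F}\cap H)$, itself resting on a sub-lemma that $M+\kt X_1$ remains radical, together with $T(\mathring{F})=T(F)$ from Proposition \ref{dimtan}. Your approach buys uniformity and the useful by-product $T(F)\simeq\bigl(K\{F\}/\tau K\{F\}\bigr)^{*}$, a statement the paper never records, so you should write out its short verification; the paper's approach stays within tools it has already established, at the cost of the ``quite technical'' reduction step. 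One cosmetic point: the uniqueness of $g$ with $f=\tau g$ (absence of $\tau$-torsion in $K\{F\}$) is not needed, since any such $g$ vanishes on $H$.
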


\begin{proof} The first property is obvious from the definition.

For the second one,  we suppose in a first time that $F$ is irreducible, hence, up to an isomorphism,
$F = K^n$. Now, we can also suppose that
$M(H) = \kt P_1(X_1)+\ldots+\kt P_r(X_r)$
for some $P_1,\ldots,P_r \in \kt\setminus \{0\}$. By construction, $F/H$ is the image of the
following morphism which is clearly surjective, hence $F/H = K^r$ : 
$$
\begin{array}{rcl}
\Pi : F = K^n & \longrightarrow & K^r \\
(x_1,\ldots,x_n) & \mapsto & (P_1(x_1),\ldots, P_r(X_r))
\end{array}
$$
By definition, the tangent map is given by
$$
\begin{array}{rcl}
d(\Pi) : T(F) = K^n & \longrightarrow & T(F/H) = K^r \\
(x_1,\ldots,x_n) & \mapsto & (d(P_1)(x_1),\ldots, d(P_r)(x_r))
\end{array}
$$
Since $M(H)$ is radical, $d(P_i) \not = 0$, proving that $d(\Pi)$ is surjective.

We return to the (quite technical) 
general case. Let $\mathring{F}$ be the irreducible component of $F$. Then
$\Pi(\mathring{F})$ is the irreducible component of $\Pi(F) = F/H$ by lemma \ref{imagecomp}. 
Suppose that we can show that $\Pi(\mathring{F}) = 
\mathring{F}/(\mathring{F}\cap H)$ as $q$-varieties.\footnote{they are obviously equal 
as $\Fq$-vector spaces.} By the previous case, 
we have a surjection
$T(\mathring{F}) \longrightarrow T(\Pi(\mathring{F})) = T(\mathring{F/H})$. But, by 
proposition \ref{dimtan}, $T(\mathring{F}) =T(F)$ and $T(\mathring{F/H})= T(F/H)$, proving the 
proposition.
\end{proof}

To finish the proof, we need the following lemma

\begin{lemma}
Let $H\subset F$ be $q$-varieties, $\Pi : F \longrightarrow F/H$ be the projection morphism
and $\mathring{F}$ be the 
irreducible component of $F$, then
$$\Pi(\mathring{F}) = \mathring{F}/(\mathring{F}\cap H).$$
\end{lemma}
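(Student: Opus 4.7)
The approach is to show that $\Pi|_{\mathring F}\colon \mathring F\to \Pi(\mathring F)$ is, for a well-chosen generating set, the canonical quotient morphism of $\mathring F$ by $\mathring F\cap H$; the two $q$-varieties then coincide as images of $\mathring F$.

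First I would put $F$ in standard form. Applying proposition \ref{sanstors} to $F$ and changing coordinates as in the proof of lemma \ref{null}, I can assume $F=F_1\times\cdots\times F_r\times K^k$ with the $F_i$ finite $\Fq$-vector spaces, so $\mathring F=\{0\}\times\cdots\times\{0\}\times K^k$. Fix a finite generating set $f_1,\ldots,f_m$ of $M(H)$, giving $\Pi=(f_1,\ldots,f_m)$ and $F/H=\Pi(F)$. Decompose $\lan$ as the direct sum of $\lan'=\kt X_1\oplus\cdots\oplus\kt X_r$ and $\lan''=\kt X_{r+1}\oplus\cdots\oplus\kt X_n$, and write $f_i=f_i'+f_i''$ accordingly. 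Since $\Fq$-linear polynomials vanish at the origin, the restriction $\Pi|_{\mathring F}$ is given by $(f_1'',\ldots,f_m'')$ under the natural identification $K\{\mathring F\}\simeq\lan''$.

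The key step is to prove that the $f_i''$ already generate $M_{\mathring F}(\mathring F\cap H)$ as a $\kt$-submodule of $K\{\mathring F\}$. Let $\Lambda$ be the submodule they generate: since each $f_i''$ vanishes on $\mathring F\cap H$, one has $\Lambda\subset M_{\mathring F}(\mathring F\cap H)$, and both share the zero locus $\mathring F\cap H$, so by lemma \ref{null} it suffices to verify that $\Lambda$ is radical. Suppose $\tau g=\sum_i a_i f_i''$ with $g\in K\{\mathring F\}$, and lift $g$ to $\tilde g\in\lan''$; then the element $\sum_i a_i f_i\in M(H)$ can be written $\phi+\tau\tilde g$ with $\phi=\sum_i a_i f_i'\in\lan'$. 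Evaluating at an arbitrary point $(y,z)\in H$ gives $\tilde g(z)^q=-\phi(y)$. Because $K$ is algebraically closed of characteristic $p$, the map $\phi'(y):=(-\phi(y))^{1/q}$ is a well-defined $\Fq$-linear function on the finite part $F_1\times\cdots\times F_r$; since $\Fq$-linear maps from a finite $\Fq$-vector space to $K$ are automatically polynomial, $\phi'$ extends to an element of $\lan'$ satisfying $\tau\phi'=-\phi$. Then $\tilde g-\phi'$ vanishes on $H$, hence lies in $M(H)=(f_1,\ldots,f_m)$; restricting the resulting expression to $\mathring F$, where $\phi'$ vanishes and $\tilde g$ restricts to $g$, exhibits $g$ as a $\kt$-combination of the $f_i''$, so $g\in\Lambda$.

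Given the claim, proposition \ref{quotient} may be applied with $f_1'',\ldots,f_m''$ as the generating set of $M_{\mathring F}(\mathring F\cap H)$, and the resulting quotient is tautologically $\Pi|_{\mathring F}(\mathring F)=\Pi(\mathring F)$. The main obstacle is the radicality of $\Lambda$: it requires absorbing the Frobenius factor $\tau$ appearing in $\tau\tilde g$ against a $q$-th root of the finite-coordinate correction $\phi$, which is possible only because $K$ is algebraically closed, the characteristic is $p$, and all $\Fq$-linear maps from a finite $\Fq$-vector space to $K$ are polynomial.
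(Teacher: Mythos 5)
Your proof is correct, and while its skeleton matches the paper's (put $F$ in the standard form $F_1\times\cdots\times F_r\times K^{n-r}$, fix a generating set $f_1,\ldots,f_m$ of $M(H)$, and apply proposition \ref{quotient} to a well-chosen generating set of the module of functions vanishing on $\mathring{F}\cap H$), the key technical step is handled by a genuinely different argument. The paper stays inside $\lan$ and proves that $M(H)+\kt X_1+\cdots+\kt X_r$ is radical via an auxiliary lemma established with Euclidean division in $\kt$ and the relation $\tau\kt=\kt\tau$, exploiting that $M(H)$ contains separable polynomials $P_i(X_i)$; you instead pass to the restricted module $\Lambda=\sum_i\kt f_i''$ inside $K\{\mathring{F}\}\simeq \kt X_{r+1}\oplus\cdots\oplus\kt X_n$ and prove its radicality directly, absorbing the Frobenius by taking the $q$-th root of the finite-part correction $\phi$ and interpolating it by an element of $\kt X_1\oplus\cdots\oplus\kt X_r$ — the same interpolation device the paper only invokes in the later lemma on extending $\Fq$-linear maps that are morphisms on $\mathring{F}$. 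The two radicality statements are in fact equivalent (your $\Lambda$ is the image of $M(H)+\kt X_1+\cdots+\kt X_r$ under the projection along $\kt X_1\oplus\cdots\oplus\kt X_r$), but your route is more concrete and makes explicit where finiteness of the factors $F_i$ and perfectness of $K$ enter, whereas the paper's is purely module-theoretic; your choice of $f_1'',\ldots,f_m''$ as generating set also makes the identification of $\Pi(\mathring{F})$ with the quotient tautological, avoiding the paper's trailing-zeros identification. Two small points to tighten: the identity $\tau\phi'=-\phi$ holds only as functions on $F_1\times\cdots\times F_r$, not as polynomials (which is all you actually use when checking that $\tilde g-\phi'$ vanishes on $H$), and radicality requires handling $\tau^N g\in\Lambda$ for arbitrary $N$, which follows from your case $N=1$ by an immediate induction.
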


\begin{proof} As usual, we can suppose that
$F = F_1\times\ldots\times F_r\times K^{n-r}$ with the $F_i$ being finite 
$\Fq$-vector spaces, so $\mathring{F} = \{0\}\times\ldots\times\{0\}\times K^{n-r}$.
Let $f_1,\ldots,f_m \in \Lambda_n$ be a generating set of $M(H)$. The map $\Pi$ is  defined 
by $\Pi(x) =(f_1(x),\ldots,f_m(x))$ (see proof of proposition \ref{quotient}). So 
\begin{multline*}
\Pi(\mathring{F}) =
\{(f_1(0,\ldots,0,x_{r+1},\ldots,x_n),\ldots,f_m(0,\ldots,0,x_{r+1},\ldots,x_n))\\
\mid (x_{r+1},\ldots,x_n) \in K^{n-r}\}.
\end{multline*}

Let us prove that $f_1,\ldots,f_m, X_1,\ldots,X_r$ is a generating set for\linebreak
$M(\mathring{F}\cap H)$. 

\begin{lemma}
Let $M\subset \lan$ be a radical module containing a separable polynomial $P_1(X_1)$
then  $M+\kt X_1$ is also radical.
\end{lemma}
\begin{proof}
Let us consider the $\kt$-modules canonical isomorphim
$$(M+\kt X_1)/M \simeq \kt X_1/(\kt X_1\cap M).$$
Since $\kt$ is euclidean, there exists $D_1\in \kt X_1$ such that $\kt X_1\cap M = \kt D_1$. 
By assumption, $P_1 \in \kt X_1\cap M$, hence $D_1$ divides $P_1$, so $D_1$ is separable. 
Furthermore, since $K$ is algebraically closed, $\tau\kt = \kt\tau$. This implies easily that
$\tau (\kt X_1/\kt D_1) = \kt X_1/\kt D_1$. Now let $P \in \lan$ such that $\tau P \in M+\kt X_1$.
By the previous result, there exists $Q\in M+\kt X_1$ such that $\tau P \equiv \tau Q \mod M$, hence
$\tau(P-Q) \in M$. But $M$ is radical, so $P-Q\in M$. It follows immediatly that $P \in  M+\kt X_1$ 
proving that $ M+\kt X_1$ is radical.
\end{proof}

By an obvious induction,  $M+\kt X_1+\ldots+\kt X_r$ is a radical module. Clearly, 
$Z(M+\kt X_1+\ldots+\kt X_r) = \mathring{F}\cap H$, proving that $M(\mathring{F}\cap H) =
M+\kt X_1+\ldots+\kt X_r$ and the claim.

It follows  that
\begin{align*}
\mathring{F}/(\mathring{F}\cap H) 
& = \{(f_1(x),\ldots,f_m(x),x_1,\ldots,x_r) \mid x\in \mathring{F}\}\\
& = \{(f_1(0,\ldots,0,x_{r+1},\ldots,x_n),\ldots,\\
&\qquad f_m(0,\ldots,0,x_{r+1},\ldots,x_n),0,\ldots,0)
\mid (x_{r+1},\ldots,x_n) \in K^{n-r}\}.
\end{align*}
This proves the lemma.
\end{proof}

We can now give a criteria for separable morphisms.

\begin{proposition}
Let $\psi : F \longrightarrow H$ be a morphism of $q$-varieties and  
$\overline{\psi}$ be the induced bijective morphism from $F/\ker \psi$ to $\psi(F)$. Then 
$\psi$ is separable if and only if $d(\overline{\psi})$ is a bijection.
\end{proposition}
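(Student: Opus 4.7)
Plan: By Definition \ref{separ}, it suffices to prove that a bijective morphism $\phi : F \longrightarrow H$ of $q$-varieties is an isomorphism if and only if $d(\phi)$ is bijective. The forward direction is immediate from the functoriality of $d$ (Proposition \ref{tanfonc}): if $\phi^{-1}$ is a morphism, then $d(\phi^{-1}) \circ d(\phi) = d(\Id_F) = \Id_{T(F)}$, and symmetrically, so $d(\phi)$ is invertible with inverse $d(\phi^{-1})$.

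For the converse, the plan is to reduce to the case where $F$ and $H$ are irreducible. Using the standard form established in the proof of Proposition \ref{sanstors}, write $F = F_f \oplus \mathring{F}$ and $H = H_f \oplus \mathring{H}$ with $F_f, H_f$ finite and $\mathring{F}, \mathring{H}$ the irreducible components. Any morphism from an irreducible $q$-variety into a finite one has image both irreducible and finite, hence zero; so $\phi$ has block form $\phi = \begin{pmatrix} \alpha & 0 \\ \gamma & \delta \end{pmatrix}$ with $\alpha : F_f \to H_f$, $\gamma : F_f \to \mathring{H}$, and $\delta : \mathring{F} \to \mathring{H}$. Bijectivity of $\phi$ together with Lemma \ref{imagecomp} forces both $\alpha$ and $\delta$ to be individually bijective. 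The finite block $\alpha$ is automatically an isomorphism of $q$-varieties, because a dimension count (using the standard form of $M(F_f)$) shows that $K\{F_f\}$ identifies with $\Hom_{\Fq}(F_f, K)$ in the finite case, so every $\Fq$-linear map between finite $\Fq$-vector spaces is polynomial. By Proposition \ref{dimtan}, $T(F) = T(\mathring{F})$ and $T(H) = T(\mathring{H})$, so $d(\phi) = d(\delta)$; hence it remains only to show that $\delta$ is an isomorphism.

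In the irreducible case, Proposition \ref{sanstors} lets me assume $F = H = K^n$ (with $n$ forced equal by bijectivity), so $\phi$ corresponds to a square matrix $M \in \Mn(\kt)$. Applying Lemma \ref{diag}, I write $M = UDV$ with $D$ diagonal and $U, V \in \gl_n(\kt)$. The morphisms $\phi_U, \phi_V$ are isomorphisms, so $\phi$ is an isomorphism iff $\phi_D$ is, and $d(\phi)$ is bijective iff $d(\phi_D)$ is. Bijectivity of $\phi_D$ on $K^n$ forces each diagonal entry $D_{ii} : K \to K$ to be bijective, which since $K$ is algebraically closed and perfect means $D_{ii} = c_i \tau^{N_i}$ with $c_i \in K^\times$ and $N_i \geq 0$. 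The condition that $d(\phi_D)$ is bijective then forces $d(D_{ii}) \neq 0$, i.e.\ $N_i = 0$, so each $D_{ii} = c_i X$ is a unit in $\kt$, $D \in \gl_n(\kt)$, and $\phi_D$ is an isomorphism.

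The main obstacle is the reduction step: carefully verifying the block-triangular decomposition of $\phi$, the automatic isomorphy of the finite block $\alpha$, and the identity $d(\phi) = d(\delta)$ arising from Proposition \ref{dimtan}. Once these reductions are secured, the matrix diagonalization via Lemma \ref{diag} makes the irreducible case a short calculation.
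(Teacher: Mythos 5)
Your proof is correct, and its skeleton matches the paper's in the forward direction and in the irreducible case: reduce to a bijective morphism of $K^n$, diagonalize with Lemma \ref{diag}, note that injectivity forces the diagonal entries to be $c_i\tau^{N_i}$, and bijectivity of the differential forces $N_i=0$. Where you genuinely diverge is the reduction of the general case to the irreducible one. The paper restricts $\overline{\psi}$ to the irreducible components, concludes from the irreducible case that this restriction is an isomorphism, and then invokes a separate lemma (proved by polynomial interpolation on the finite factors) saying that an $\Fq$-linear map which is a morphism on the irreducible component is automatically a morphism; applying it to $\overline{\psi}^{-1}$ finishes the argument. You instead decompose $F = F_f\oplus\mathring{F}$ and $H = H_f\oplus\mathring{H}$, observe that the morphism is block lower-triangular because a morphism from an irreducible $q$-variety into a finite one has irreducible finite image, hence is zero, show the finite block $\alpha$ is automatically an isomorphism (your identification $K\{F_f\}=\Hom_{\Fq}(F_f,K)$ by dimension count is the same interpolation fact the paper cites from Goss), and use $T(F)=T(\mathring{F})$, $T(H)=T(\mathring{H})$ to get $d(\phi)=d(\delta)$, so only the irreducible block matters. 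The two reductions rest on identical ingredients (standard form of a $q$-variety, irreducibility of images, interpolation over finite spaces), so neither buys extra generality; yours makes the structure of the bijective morphism more explicit and avoids a separate extension lemma, at the price of two compressed verifications worth spelling out: bijectivity of $\alpha$ uses the surjectivity of $\delta$ (if $\alpha(x_f)=0$, choose $x_0$ with $\delta(x_0)=-\gamma(x_f)$ and invoke injectivity of $\phi$), and the conclusion that $\phi$ is an isomorphism once $\alpha$ and $\delta$ are needs the block inverse $\begin{pmatrix}\alpha^{-1} & 0\\ -\delta^{-1}\gamma\alpha^{-1} & \delta^{-1}\end{pmatrix}$ to certify that $\phi^{-1}$ is a morphism.
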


\begin{remark} 
\begin{enumerate}
\item Since $\dim T(F/\ker \psi) = \dim F/\ker \psi = \dim \psi(F) = \dim T(\psi(F))$, 
$d(\overline{\psi})$ is a bijection if and only if it is injective or surjective.

\item Let us denote by $\widetilde{\psi}$ the induced morphism 
$\widetilde{\psi} :  F \longrightarrow \psi(F)$. It is clear that $\widetilde{\psi} 
= \overline{\psi}\circ \Pi$, so $d(\widetilde{\psi}) =  d(\overline{\psi})\circ d(\Pi)$.
By proposition \ref{surjec}, $d(\Pi)$ is surjective, hence $d(\overline{\psi})$ is surjective 
if and only if $d(\widetilde{\psi})$ is surjective.
\end{enumerate}
\end{remark}

\begin{proof}
Suppose that $\psi$ is separable. By definition, $\overline{\psi}$ is an isomorphism, hence
$d(\overline{\psi})$ is also an isomorphism by proposition \ref{tanfonc}.

Conversely, suppose that $d(\overline{\psi})$ is a bijection.  We assume first that $F$ is 
irreducible. Let $r = \dim F/\ker \psi = \dim \psi(F)$, so that, up to isomorphisms,
$F/\ker \psi = \psi(F) = K^r$ and $\overline{\psi} : K^r \longrightarrow K^r$ is a 
bijective morphism. Using lemma \ref{diag}, up to automorphisms, $\overline{\psi}$ is diagonal.
Since it is injective, the diagonal terms must be powers of $\tau$. But $d(\overline{\psi})$ 
is a bijection, hence the exponents must be $0$, so $\overline{\psi}$ is the identity map, 
up to isomorphisms.

We now consider the general case. By lemma \ref{imagecomp}, the image of $\mathring{F/\ker \psi}$ by 
$\overline{\psi}$ is the irreducible component of $\psi(F)$, so 
$\overline{\psi} : \mathring{F/\ker \psi} \longrightarrow  \mathring{\psi(F)}$ is a bijective 
morphism. Since $T(\mathring{H}) = T(H)$ for any $H$, $d(\overline{\psi}) : T(\mathring{F/\ker \psi})
\longrightarrow T(\mathring{\psi(F)}$ is a bijection by hypothesis. It follows from the previous case
that $\overline{\psi} : \mathring{F/\ker \psi} \longrightarrow  \mathring{\psi(F)}$ is an isomorphism.
We will be done if we can apply the following  lemma to the reciprocal map ${\overline{\psi}}^{-1}$.
\end{proof}

\begin{lemma}
Let $F$ and $H$ be $q$-varieties and $\psi : F \longrightarrow H$ be an $\Fq$-linear map
such that $\psi_{|\mathring{F}} : \mathring{F} \longrightarrow H$ is a morphism.
Then $\psi$ is a morphism.
\end{lemma}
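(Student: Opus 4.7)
The plan is to decompose $\psi$ as a sum of two morphisms, one coming directly from the hypothesis and the other obtained by an interpolation argument on the finite part of $F$. As in the proof of Lemma~\ref{null}, we may assume up to isomorphism of $q$-varieties that $F = F_1 \times \cdots \times F_r \times K^{n-r}$ with each $F_i$ a finite $\Fq$-vector space, so that $\mathring{F} = \{0\}^r \times K^{n-r}$. Set $E = F_1 \times \cdots \times F_r \times \{0\}^{n-r}$, which is a finite $q$-variety by Lemma~\ref{finitevar}. The coordinate projections $\pi_0 : F \to \mathring{F}$ and $\pi_1 : F \to E$ (killing the first $r$, respectively the last $n-r$, coordinates) are visibly morphisms, and $\pi_0 + \pi_1 = \mathrm{id}_F$. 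Using $\Fq$-linearity of $\psi$, we get $\psi = (\psi|_{\mathring{F}}) \circ \pi_0 + (\psi|_E) \circ \pi_1$. The first summand is a composition of morphisms, hence a morphism, and a sum of two morphisms into the $\Fq$-vector space $H$ is again a morphism, so the proof reduces to showing that $\psi|_E : E \to H$ is a morphism.

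The main obstacle is therefore the following interpolation statement: every $\Fq$-linear map from a finite $q$-variety $E \subset K^r$ to any $q$-variety $H \subset K^m$ is a morphism. Working coordinate by coordinate on $H$, it suffices to show that every $\Fq$-linear map $\phi : E \to K$ is the restriction of some $f \in \Lambda_r$. I would prove this by induction on $d = \dim_{\Fq} E$. The case $d = 0$ is trivial; if $d = 1$ and $E = \Fq v$ with $v_j \neq 0$ for some index $j$, then $f = (\phi(v)/v_j) X_j \in \Lambda_r$ satisfies $f|_E = \phi$ by $\Fq$-linearity.

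For the inductive step, choose an $\Fq$-hyperplane $E' \subset E$ (again a $q$-variety by Lemma~\ref{finitevar}) and some $w \in E \setminus E'$, so that $E = E' \oplus \Fq w$. By induction there exists $f' \in \Lambda_r$ with $f'|_{E'} = \phi|_{E'}$. Since $w \notin E' = Z(M(E'))$ by Proposition~\ref{nullpart}, we can pick $g_0 \in M(E')$ with $g_0(w) \neq 0$, and then $g = \lambda g_0 \in \Lambda_r$ with $\lambda = (\phi(w) - f'(w))/g_0(w) \in K$ vanishes on $E'$ and equals $\phi(w) - f'(w)$ at $w$. The element $f = f' + g \in \Lambda_r$ agrees with $\phi$ on $E' \cup \{w\}$, hence on all of $E$ by $\Fq$-linearity. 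Applying this coordinate-wise to $\psi|_E$ produces $\Fq$-linear polynomials representing $\psi|_E$, whose image lies in $H$ by hypothesis; so $\psi|_E$ is a morphism into $H$, completing the proof.
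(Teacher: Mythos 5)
Your proof is correct, and its skeleton is the same as the paper's: reduce, up to an automorphism, to $F = F_1\times\cdots\times F_r\times K^{n-r}$, use $\Fq$-linearity to split $\psi$ into its restriction to $\mathring{F}$ (a morphism by hypothesis) plus its restriction to the finite part, and then interpolate the finite part by $\Fq$-linear polynomials. The only real divergence is how the interpolation is justified: the paper treats each finite factor $F_i\subset K$ separately and simply cites the one-variable $\Fq$-linear interpolation from Goss (chapter 1.3), whereas you handle the whole finite block $E$ at once and prove the interpolation statement from scratch by induction on $\dim_{\Fq}E$, using Lemma \ref{finitevar}, Proposition \ref{nullpart} and a function $g_0\in M(E')$ not vanishing at $w$ to do the inductive step. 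Your version is self-contained within the paper's own machinery (no external citation) at the cost of a slightly longer argument; both are complete, and your base case, inductive step, and the final verification that the interpolating polynomials indeed give a morphism into $H$ (since $\psi(E)\subset H$ by hypothesis) are all sound.
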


\begin{proof} Without loss of generality, we can suppose that $H = K^m$ and 
$F = F_1\times\ldots F_r\times K^{n-r}$ with $F_i \subset K$  finite $\Fq$-vector spaces.
Let $f_1,\ldots,f_m$ be the functions defined by $\psi(x,0,\ldots,0) = (f_1(x),\ldots,f_m(x))$ for
$x\in F_1$. Using polynomial interpolation (see \cite{goss} chapter 1.3), there exists 
$P_1,\ldots,P_m\in \kt$ such that for all $x\in F_1$ and $1\leq i\leq m$, $f_i(x) = P_i(x)$. We set
$\psi_1(x) = (P_1(x),\ldots,P_m(x))$ for $x \in K$. By construction, for all $ x\in F_1$,
$\psi_1(x) = \psi(x,0,\ldots,0)$. The same way, we construct $\psi_2,\ldots, \psi_r$ and it is easy 
to check that for all $x \in F$, $\psi(x) = \psi_1(x_1)+\ldots+\psi_r(x_r)
+\psi(0,\ldots,0,x_{r+1},\ldots,x_n)$.
\end{proof}

\section{$A$-modules}

Let $A=\Fq[T]$ be the polynomial ring and $\delta : A \longrightarrow K$ be a morphism of 
$\Fq$-algebras. The kernel of $\delta$ is called the characteristic of $A$.

Let $F$ be a $q$-variety. The ring of endomorphism of $F$, $\mor(F,F)$, will
be denoted by $\End(F)$.

\begin{defi}\label{Amodule}
Let $F$ be a $q$-variety. We say that $(F,\Phi)$ is an $A$-module structure 
if $\Phi : A \longrightarrow \End(F)$ is a morphism of 
$\Fq$-algebras
such that, for all $a \in A$,
$$d(\Phi_a) = \delta(a)\Id_{T(F)}.$$
Let $(F, \Phi)$ and $(H,\Psi)$ be $A$-modules. 
We say that $U : F \longrightarrow G$ is an $A$-morphism if
it is a morphism of $q$-varieties and $A$-modules, i.e., for all $a\in A$ and for all $x\in F$,
$$U(\Phi_a(x))= \Psi_a(U(x)).$$
\end{defi}

\begin{remark}
In \cite{anderson}, the condition on $d(\Phi_a)$ is slightly different : $d(\Phi_a) = \delta(a)\Id_{T(F)}+N$
with $N$ an nilpotent endomorphism of $T(F)$. In the present article, $N$ is supposed to be zero
for simplicity 
but most properties should remain valid with $N \not = 0$.
\end{remark}

\begin{example} Let
$K$ be the algebraic closure of $\Fq(T)$, so that $\delta : A \longrightarrow K$ is just the 
inclusion. To define an $A$-module $(F, \Phi)$, it is sufficient to give $\Phi_T$.
\begin{enumerate}
\item The Carlitz module : we take $F = K$ and $\Phi_T = TX+X^q = T\tau^0+\tau$. It is the 
simplest non trivial $A$-module in dimension $1$. It is denoted by $C$. Let us denote
$C^-$ the $A$-module defined by $C^-_T = TX-X^q = T\tau^0-\tau$. These two $A$-modules 
are indeed isomorphic : let $\lambda \in K$ be such that $\lambda^{q-1} = -1$ and 
$U : K \longrightarrow K$ defined by $U(x) = \lambda x$. It is well-known and easy to 
check that $U$ is an isomorphism.

\item A Drinfeld module is an $A$-module with $F = K$ and $\Phi$ non trivial ($\Phi_a \not = \delta(a)\tau^0$).

\item Let $F = K^2$ and $\Phi$ be the $A$-module defined by
$$
\Phi_T = 
\begin{pmatrix}
T\tau^0  & \tau \\
\tau & T\tau^0
\end{pmatrix}.
$$
It means that $\Phi_T(x_1,x_2) = (Tx_1+x_2^q,x_1^q+Tx_2)$.
On the line $x_2 = x_1$, the $A$ action is given by
$\Phi_T(x,x) = (Tx+x^q,Tx+x^q) = (C_T(x),C_T(x))$, 
so the line $x_1 = x_2$ is an $A$-module and the induced
$A$-module structure is canonically isomophic the Carlitz module. 
The same is true on the line $x_2 = -x_1$: 
$\Phi_T(x,-x) = (C^-_T(x),-C^-_T(x))$. It follows that $\Phi$ is canonically 
isomorphic to the
direct sum of $C$ and $C^-$ if $p\not = 2$.
\end{enumerate}
\end{example}

\begin{proposition}
Let $(F,\Phi)$ be an $A$-module and $H\subset F$ be a $q$-variety. Then 
\begin{enumerate}
\item If, for all $a\in A$, $\Phi_a(H) \subset H$, then $H$ is an $A$-module.
\item If $H\subset F$ is an $A$-module, then $F/H$ is also an $A$-module. 
\item The irreducible component $\mathring{F}$ is an $A$-module.
\end{enumerate}
\end{proposition}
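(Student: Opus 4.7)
The plan is to handle the three parts in order, with part (3) reducing to part (1) via Lemma \ref{imagecomp}.

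For part (1), assume $\Phi_a(H)\subset H$ for every $a\in A$. Then each $\Phi_a$ restricts to an endomorphism $\Phi_a|_H\in\End(H)$ of the $q$-variety $H$, and the assignment $a\mapsto \Phi_a|_H$ is visibly a ring morphism $A\longrightarrow\End(H)$ since $\Phi$ is. The only nontrivial point is the differential condition. By proposition \ref{surjec}(1), $T(H)\subset T(F)$; and by the functoriality of the tangent map (proposition \ref{tanfonc}), $d(\Phi_a|_H)$ is simply the restriction of $d(\Phi_a)$ to $T(H)$. Since $d(\Phi_a)=\delta(a)\Id_{T(F)}$ on all of $T(F)$, its restriction to $T(H)$ is $\delta(a)\Id_{T(H)}$, as required.

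For part (2), let $H\subset F$ be an $A$-submodule and write $\Pi:F\longrightarrow F/H$ for the canonical morphism. For each $a\in A$, the composite $\Pi\circ\Phi_a:F\longrightarrow F/H$ vanishes on $H$ (since $\Phi_a(H)\subset H=\ker\Pi$), so proposition \ref{quotient} yields a unique morphism $\overline{\Phi_a}:F/H\longrightarrow F/H$ with $\overline{\Phi_a}\circ\Pi=\Pi\circ\Phi_a$. The uniqueness clause in that proposition immediately gives $\overline{\Phi_{ab}}=\overline{\Phi_a}\circ\overline{\Phi_b}$ and $\overline{\Phi_{a+b}}=\overline{\Phi_a}+\overline{\Phi_b}$, so $a\mapsto\overline{\Phi_a}$ is a ring morphism $A\longrightarrow\End(F/H)$. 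For the tangent condition, differentiate the defining relation: $d(\overline{\Phi_a})\circ d(\Pi)=d(\Pi)\circ d(\Phi_a)=\delta(a)\,d(\Pi)$. By proposition \ref{surjec}(2), $d(\Pi):T(F)\longrightarrow T(F/H)$ is surjective, so this forces $d(\overline{\Phi_a})=\delta(a)\Id_{T(F/H)}$. This is the main subtle step, and it is exactly where the surjectivity of $d(\Pi)$ is essential.

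For part (3), I need to check $\Phi_a(\mathring{F})\subset\mathring{F}$ for every $a\in A$; part (1) then gives the $A$-module structure. Applying lemma \ref{imagecomp} to the endomorphism $\Phi_a:F\longrightarrow F$ yields $\Phi_a(\mathring{F})=\mathring{\Phi_a(F)}$. But $\Phi_a(F)\subset F$, so $\mathring{\Phi_a(F)}$ is an irreducible sub-$q$-variety of $F$, and hence is contained in the maximal such subvariety $\mathring{F}$ by lemma \ref{compo}. This gives $\Phi_a(\mathring{F})\subset\mathring{F}$ and concludes the proof via part (1).

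Overall the argument is short; the one place requiring genuine input from the earlier theory is the verification of $d(\overline{\Phi_a})=\delta(a)\Id$ in part (2), which hinges on the surjectivity of the tangent map of $\Pi$ established in proposition \ref{surjec}.
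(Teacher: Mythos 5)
Your proof is correct and follows essentially the same route as the paper: restriction plus functoriality of the tangent map for (1), the universal property of $F/H$ together with the surjectivity of $d(\Pi)$ from proposition \ref{surjec} for (2), and maximality of the irreducible component for (3). The only cosmetic difference is that in (3) you pass through lemma \ref{imagecomp}, whereas the paper directly invokes that $\Phi_a(\mathring{F})$ is irreducible (proposition \ref{sommeimageirred}) and hence lies in $\mathring{F}$; the two arguments are interchangeable.
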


\begin{proof}
\begin{enumerate}
\item Since $T(H)\subset T(F)$ and by fonctoriality of the tangent map, we have
 $d({\Phi_a}_{|H}) = d({\Phi_a})_{|T(H)} = \delta(a)\Id_{T(H)}$, 
so $H$ is an $A$-module.
\item Let $\Pi : F \longrightarrow F/H$ be the projection map. Consider $\Pi\circ \Phi_a :
 F \longrightarrow F/H$. It is zero on $H$, hence by property of $F/H$ (see proposition
\ref{quotient}), there exists a
unique morphism $\Psi_a : F/H \longrightarrow F/H$ such that
$$ \Pi\circ \Phi_a = \Psi_a \circ \Pi .$$
By uniqueness of $\Psi_a$, it is clear that $a \longrightarrow \Psi_a$ is a ring morphism
from $A$ to $\End(F/H)$. Furthermore, taking the tangent maps, we get
$$ d(\Pi)\circ d(\Phi_a) = d(\Psi_a) \circ d(\Pi).$$
But $d(\Phi_a) = \delta(a)\Id_{T(F)}$ and $d(\Pi)$ is surjective by proposition \ref{surjec}.
It follows that $d(\Psi_a) = \delta(a)\Id_{T(F/H)}.$
\item Since the direct image of an irreducible is still irreducible, we have 
$\Phi_a(\mathring{F})\subset \mathring{F}$.
\end{enumerate}
\end{proof}

Let $(F,\Phi)$ be an $A$-module. Then $K\{F\}$ has an obvious $A$-module structure setting for 
$f\in K\{F\}$ and $a \in A$ :
$$a\cdot f = f\circ \Phi_a.$$
Furthermore,
the $A$ action commutes with the $K$ and the $\tau$ actions. In particular, $K\{F\}$ is a 
$K\otimes_{\Fq}A = K[T]$-module.

\begin{theorem}\label{rankphi} Let $(F,\Phi)$ be an $A$-module. Then the $K(T)$-vector space $K(F)$ defined
by $K(F) = K(T)\otimes_{K[T]}K\{F\}$ has finite dimension. Its dimension is called the rank of the module
$(F,\Phi)$ and is
denoted by~$r(F)$.
\end{theorem}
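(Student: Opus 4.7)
The plan is in two steps: first reduce to the case where $F$ is irreducible, then apply a Smith--normal--form argument over a skew polynomial ring with $K(T)$-coefficients.

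For the reduction, I would apply remark \ref{exactsequence} to the pair $\mathring{F}\subset F$ (both $A$-modules, since $\mathring{F}$ is $A$-stable by the preceding proposition) to obtain an exact sequence of $K[T]$-modules
$$0 \to K\{F/\mathring{F}\} \to K\{F\} \to K\{\mathring{F}\} \to 0.$$
Flatness of the localization $K(T)$ over $K[T]$ preserves exactness after applying $K(T)\otimes_{K[T]}-$. Since $F/\mathring{F}$ is a finite $\Fq$-vector space, $K\{F/\mathring{F}\}$ is finite-dimensional over $K$; the characteristic polynomial of the $K$-linear operator induced by $T$ kills it, so $K(T)\otimes_{K[T]} K\{F/\mathring{F}\}=0$ and $K(F)\cong K(\mathring{F})$. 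It therefore suffices to treat $F = K^m$.

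In the irreducible case, let $N\in \Mn(\kt)$ represent $\Phi_T$ on the standard basis of $K\{F\} = \kt^m$ via $T\cdot X_i = \sum_j N_{ij} X_j$. Let $R = K[T]\{\tau\}$ be the skew polynomial ring in which $T$ is central and $\tau k = k^q\tau$ for $k\in K$; the two commuting actions make $K\{F\}$ a left $R$-module. The canonical surjection $R^m \twoheadrightarrow K\{F\}$, $e_i\mapsto X_i$, kills the rows of the matrix $\mathcal{M} := T\Id_m - N \in \Mn(R)$ (by definition of the $A$-action). Tensoring with $K(T)$ then yields a surjection from the cokernel $C$ of the rows of $\mathcal{M}$ inside $K(T)\{\tau\}^m$ onto $K(F)$, so it suffices to bound $\dim_{K(T)} C$.

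The ring $K(T)\{\tau\}$ is again left-right Euclidean in $\tau$-degree, so the proof of lemma \ref{diag} carries over to give $U\mathcal{M} V = D$ diagonal with $U,V\in \gl_m(K(T)\{\tau\})$; then $C \cong \bigoplus_i K(T)\{\tau\}/K(T)\{\tau\}D_{ii}$ is a $K(T)$-vector space of dimension $\sum_i \deg_\tau D_{ii}$. It remains to see each $D_{ii}$ is nonzero, which is where the tangent condition enters: it forces the constant-in-$\tau$ term of $N$ to equal $\theta\Id$, so the constant-in-$\tau$ term of $\mathcal{M}$ is the invertible matrix $(T-\theta)\Id \in \Mn(K(T))$. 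A short induction on the $\tau$-degree then shows that $a\mathcal{M}=0$ implies $a=0$ in $K(T)\{\tau\}^m$; since $K(T)\{\tau\}$ is a domain, the injectivity of the diagonal form $D$ forces $D_{ii}\neq 0$ for all $i$, giving $r(F) \leq \sum_i \deg_\tau D_{ii} < \infty$.

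The main difficulty is the non-commutative bookkeeping---transferring lemma \ref{diag} from $\kt$ to $K(T)\{\tau\}$ and carefully tracking left- versus right-module structure when identifying the cokernel---but once this is formalized, the tangent condition delivers exactly the injectivity of $\mathcal{M}$ needed to make the cokernel $K(T)$-finite-dimensional.
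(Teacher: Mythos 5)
Your proof is correct, but it follows a genuinely different route from the paper's. The paper argues directly on $K\{F\}$ as a finitely generated $\kt$-module, with no reduction to the irreducible component and no matrix normal form: since $K\{F\}$ is noetherian over $\kt$, the chain of $\kt$-submodules generated by $T^0\cdot\tau_1^0,\ldots,T^d\cdot\tau_1^0$ stabilizes, giving a relation $\sum_j Q_j(T)\cdot\tau_1^j=0$ with $Q_0$ monic; twisting this relation by $\tau$ repeatedly shows that every power of $\tau_1$ lies in the $K(T)$-span of $\tau_1^0,\ldots,\tau_1^{s-1}$ in $K(F)$, and the same for the other coordinates, so $K(F)$ is finitely spanned. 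In particular the paper's proof never invokes the condition $d(\Phi_a)=\delta(a)\Id_{T(F)}$ nor lemma \ref{diag}. Your reduction step is in effect lemma \ref{rangexact} (proved later in the paper by the same flatness argument), and your main step replaces the noetherian trick by a presentation of $K\{F\}$ over the Ore ring $K[T]\{\tau\}$ by the matrix $T\Id_m-N$, localization, and diagonalization over $K(T)\{\tau\}$; this works, provided you really do the bookkeeping you flag ($K[T]$ is not central in $K[T]\{\tau\}$, only $A$ is, so identifying the localized presentation with the cokernel over $K(T)\{\tau\}$ uses that the twist is an automorphism of $K(T)$ because $K$ is perfect, which is also what makes $K(T)\{\tau\}$ left and right Euclidean). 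Two remarks on the comparison: your use of the tangent condition is not essential even in your own argument, since for any constant term $d(\Phi_T)\in \mathrm{M}_m(K)$ the matrix $T\Id_m-d(\Phi_T)^{(q^k)}$ has monic determinant in $T$ and is therefore invertible over $K(T)$, so right multiplication by $T\Id_m-N$ is injective regardless; on the other hand your approach buys something the paper's does not, namely an explicit presentation of $K(F)$ as $\bigoplus_i K(T)\{\tau\}/K(T)\{\tau\}D_{ii}$ up to a surjection and hence the effective bound $r(F)\le\sum_i\deg_\tau D_{ii}$ in terms of a diagonal form of $T\Id_m-N$, whereas the paper's shorter argument gives finiteness with no control on the rank.
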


\begin{proof}
By definition, $F \subset K^n$ for some $n\in \N$. Hence $K\{F\}$ is a quotient of $\lan$ and is generated,
as a $\kt$-module, by the images of $\tau_1^0,\ldots,\tau_n^0$ in $K\{F\}$. These images are still denoted
$\tau_1^0,\ldots,\tau_n^0$ for simplicity.

Since $\kt$ is principal, $\lan$ and its quotient $K\{F\}$ are noetherian. It implies that there exists
$d\in \N$ such that $T^d.\tau_1^0$ belongs to the $\kt$-module generated by 
$T^{d-1}.\tau_1^0,\ldots,T^0.\tau_1^0$. It means that there exist $P_{d-1},\ldots,P_0 \in \kt$ such that
$$T^d.\tau_1^0 = \sum_{i=0}^{d-1} P_i\, T^i.\tau_1^0 = \sum_{i=0}^{d-1} T^i.P_i(\tau_1).$$
Rewriting this relation as a polynomial in $\tau_1$ with coefficients in $K[T]$, we get
\begin{equation}\label{relat}
\sum_{j=0}^s Q_j(T).\tau_1^j = 0
\end{equation}
for some $Q_j\in K[T]$ and $s\in \N$. Relation \eqref{relat} 
is not trivial because $Q_0$ is a monic polynomial of 
degree $d$, so we can suppose that $Q_s \not = 0$. It implies that $\tau_1^s$ belongs to the 
$K(T)$-vector space generated by $\tau_1^{s-1},\ldots,\tau_1^0$ in $K(F)$. Applying $\tau$ to relation 
\eqref{relat},
we get easily that $\tau_1^{s+1}$ belongs to the 
$K(T)$-vector space generated by $\tau_1^{s},\ldots,\tau_1^1$, hence to the 
$K(T)$-vector space generated by $\tau_1^{s-1},\ldots,\tau_1^0$. By induction, we get that all powers of
$\tau_1$ belongs to that vector space.

The same is obviously true for $\tau_2,\ldots,\tau_n$, proving the theorem.
\end{proof}

\begin{remark}
\begin{enumerate}
\item With G. Anderson definition (see \cite{anderson}), $K\{F\}$ is the motive associated to $F$. 
Furthermore, if  $K\{F\}$ is a free $K[T]$-module of rank $r$, 
it is clear that $\dim_{K(T)}K(F) = r$. Hence, our 
definition of the rank is coherent with Anderson's definition.
\item Let $F = K^n$ and $\Phi$ be the trivial module : for all $a\in A$ and $x\in K^n$, 
$$\Phi_a(x) = \delta(a)x.$$
In particular, for $1\leq i\leq n$, $(T-\delta(T)).\tau_i^0 = 0$. Composing with $\tau^m$, we get
$$(T-\delta(T)^{q^m}).\tau_i^m = 0.$$
It follows that $K\{F\}$ is a torsion module, hence $K(F) = 0$ and the rank of the trivial module is
$0$.
\end{enumerate}
\end{remark}

\section{Torsion points}

\begin{notation}
Let $(F,\Phi)$ be an $A$-module and $a\in A$. The $a$-torsion of $F$ will be denoted by
$\tor(a,F)$. In other words
$$\tor(a,F) = \{x\in F \mid \Phi_a(x) = 0\} = \ker \Phi_a.$$
It is an $\Fq$-vector space.
\end{notation}

\begin{theorem}\label{fintor}
Let $(F,\Phi)$ be an $A$-module and $a\in A\setminus \ker \delta$, then 
$\tor(a,F)$ is finite.
\end{theorem}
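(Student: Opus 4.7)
The plan is to exploit the hypothesis $d(\Phi_a) = \delta(a)\Id_{T(F)}$ together with $\delta(a)\neq 0$ to force the tangent space of the torsion subvariety to be trivial, and then invoke the structure theorem for irreducible $q$-varieties to deduce finiteness.

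First, I would note that $\tor(a,F) = \ker \Phi_a$ is a $q$-variety, since the kernel of a morphism of $q$-varieties is a $q$-variety (remark \ref{noyau}). Set $H = \ker \Phi_a$; this is also a sub-$\Fq$-vector space of $F$.

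Next, I would compute $T(H)$. Since $\Phi_a$ vanishes identically on $H$, functoriality of the differential (proposition \ref{tanfonc}) applied to the inclusion $H \hookrightarrow F$ followed by $\Phi_a$ gives that $d(\Phi_a)$ restricted to $T(H) \subset T(F)$ (inclusion by proposition \ref{surjec}) is the zero map. But by hypothesis $d(\Phi_a) = \delta(a)\Id_{T(F)}$, and since $a \notin \ker\delta$ we have $\delta(a)\neq 0$, so $d(\Phi_a)$ is a $K$-linear automorphism of $T(F)$. The only subspace on which an automorphism vanishes is $\{0\}$, hence $T(H) = 0$.

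Now I would apply proposition \ref{dimtan}, which gives $\dim H = \dim_K T(H) = 0$. The irreducible component $\mathring{H}$ therefore satisfies $\dim \mathring{H} = 0$, and by proposition \ref{sanstors} it is isomorphic to some $K^m$; the only value of $m$ compatible with $\dim K^m = m = 0$ (theorem \ref{dimen}) is $m=0$, so $\mathring{H} = \{0\}$. Finally, lemma \ref{compo} asserts that $H/\mathring{H}$ is finite, which in our case reads $H = H/\{0\}$ is finite, i.e.\ $\tor(a,F)$ is finite. There is no real obstacle here; every ingredient is already in place, and the only thing to be careful about is the functoriality step identifying $d(\Phi_a)|_{T(H)}$ with the differential of the zero map $\Phi_a|_H = 0$.
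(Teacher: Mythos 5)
Your argument is correct: the chain rule identifies $d(\Phi_a)|_{T(H)}$ (with $H=\ker\Phi_a$, $T(H)\subset T(F)$ by proposition \ref{surjec}) with the differential of the zero morphism $\Phi_a\circ\iota$, where $d(\iota)$ is the inclusion, so $\delta(a)\,x=0$ for all $x\in T(H)$ and hence $T(H)=\{0\}$ since $\delta(a)\neq 0$; then proposition \ref{dimtan} gives $\dim H=0$, and lemma \ref{compo} together with proposition \ref{sanstors} yields finiteness. This is a genuinely different route from the paper's: the paper never computes $T(\ker\Phi_a)$ directly, but instead shows $T(\Phi_a(F))=T(F)$ by factoring $\Phi_a=i\circ\widetilde{\Phi_a}$ through its image and using surjectivity of $\delta(a)\Id_{T(F)}$, then concludes $\dim\Phi_a(F)=\dim F$ via proposition \ref{dimtan} and finally $\dim\ker\Phi_a=0$ via the rank formula of theorem \ref{rang}. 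Your version is shorter and more elementary in that it bypasses theorem \ref{rang} and any discussion of the image; what the paper's image-oriented computation buys is the fact $T(\Phi_a(F))=T(F)$, equivalently $\dim\Phi_a(F)=\dim F$, which is reused later (e.g.\ the surjectivity of $\Phi_\pi$ on an irreducible module in the proof of proposition \ref{tatemodule}), whereas your argument only yields the finiteness statement itself. The one step to keep explicit is the well-definedness of the differential (proposition \ref{tanmap}): it is precisely because $d(\psi)$ does not depend on the chosen polynomial representatives that the restriction of $\Phi_a$ to $H$, represented either by the coordinates of $\Phi_a$ or by the zero polynomials, has zero differential on $T(H)$ — you noted this, and it is indeed the crux.
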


\begin{proof} By definition, $\tor(a,F)$ is the kernel of $\Phi_a : F \longrightarrow F$.
So the theorem is equivalent to $\dim \ker \Phi_a = 0$.  Now, by theorem \ref{rang},
$\dim \ker \Phi_a = \dim F - \dim \Phi_a(F)$, so we have to prove that 
$\dim \Phi_a(F) = \dim F$. Using proposition 
\ref{dimtan}, we have $\dim_K T(\Phi_a(F)) = \dim \Phi_a(F)$ and  
$\dim_K T(F) = \dim F$, hence it is
sufficient to prove $T(\Phi_a(F)) = T(F)$.

Since $ \Phi_a(F)\subset F$, we have  $T(\Phi_a(F)) \subset T(F)$ by proposition  \ref{surjec}.
Let us prove the reverse inclusion.
We consider the induced map $\widetilde{\Phi_a} : F \longrightarrow \Phi_a(F)$. So $\Phi_a = 
i\circ \widetilde{\Phi_a}$ where $i : \Phi_a(F) \longrightarrow F$ is the inclusion map.
Taking the tangent map, we get $\delta(a)\Id_{T(F)} =d(i)\circ d(\widetilde{\Phi_a})$.
But $d(i) : T(\Phi_a(F)) \longrightarrow T(F)$ is just the inclusion by 
proposition \ref{surjec}. 
It implies that $d(\widetilde{\Phi_a})(T(F)) = \delta(a)T(F) = T(F)$ 
since $\delta(a) \not = 0$. But $d(\widetilde{\Phi_a})(T(F))\subset T(\Phi_a(F))$, hence
$T(F) \subset T(\Phi_a(F))$.
\end{proof}

\begin{example} In the following examples, $\delta$ is supposed to be the inclusion map
and  $F = K^2$.
\begin{enumerate}
\item Let $\Phi$ be the $A$-module defined by
$$
\Phi_T = 
\begin{pmatrix}
T\tau^0  & \tau \\
\tau & T\tau^0
\end{pmatrix}.
$$
We have seen that $\Phi$ is isomorphic to the sum of two copies of the Carlitz module. 
It follows immediatly that for all $a \in A\setminus \{0\}$
$$\tor(a,F) = (A/aA)^2.$$
\item Let $\Phi$ be the $A$-module defined by
$$
\Phi_T = 
\begin{pmatrix}
T\tau^0  & \tau \\
0 & T\tau^0
\end{pmatrix}.
$$
One gets immediatly that for all $a\in A$, 
$\Phi_a = 
\begin{pmatrix}
a\tau^0  & P_a \\
0 & a\tau^0
\end{pmatrix}$ for some $P_a\in \kt$.
So $\tor(a,F) = \{0\}$ if $a\not = 0$.
\end{enumerate}
\end{example}

\begin{proposition}\label{formuledim}
Let $(F,\Phi)$ be an irreducible $A$-module and $a \in A\setminus \ker \delta$. 
Then
$$\dim_\Fq\tor(a,F) = \dim_K K\{F\}/a\cdot K\{F\}.$$
\end{proposition}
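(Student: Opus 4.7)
The plan is to reduce to the exact sequence of Remark \ref{exactsequence} applied to $H = \tor(a,F) \subset F$, by showing that the submodule $a\cdot K\{F\}$ coincides with the submodule $K\{F/\tor(a,F)\}$ inside $K\{F\}$. The crux, and main obstacle, will be proving that $\Phi_a$ is separable for $a\notin \ker\delta$; everything else is then bookkeeping.

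First I would observe that $\Phi_a$ is surjective. The proof of Theorem \ref{fintor} showed $\dim \Phi_a(F) = \dim F$, so $F/\Phi_a(F)$ is finite; the irreducibility of $F$ then forces $\Phi_a(F) = F$. In particular the induced morphism $\overline{\Phi_a} : F/\tor(a,F) \to F$ is a bijective morphism of $q$-varieties. To show it is an isomorphism (i.e.\ that $\Phi_a$ is separable), I would invoke the criterion for separable morphisms established just before this proposition: it suffices to check that $d(\overline{\Phi_a})$ is bijective. By functoriality (Proposition \ref{tanfonc}) we have $d(\Phi_a) = d(\overline{\Phi_a}) \circ d(\Pi)$ with $\Pi : F \to F/\tor(a,F)$ the projection. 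Now $d(\Phi_a) = \delta(a)\Id_{T(F)}$ is bijective since $\delta(a) \neq 0$; $d(\Pi)$ is surjective by Proposition \ref{surjec}; and Proposition \ref{dimtan} together with $\dim \tor(a,F) = 0$ (Theorem \ref{fintor}) shows that $T(F)$ and $T(F/\tor(a,F))$ have the same $K$-dimension, so $d(\Pi)$ is in fact an isomorphism. Consequently $d(\overline{\Phi_a})$ is an isomorphism, and $\overline{\Phi_a}$ is an isomorphism of $q$-varieties.

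Finally I would identify $a\cdot K\{F\}$ with $K\{F/\tor(a,F)\}$. Since $\Phi_a = \overline{\Phi_a}\circ \Pi$, for $f\in K\{F\}$ we have $a\cdot f = f\circ \Phi_a = (f\circ \overline{\Phi_a})\circ \Pi$, so the map $f\mapsto a\cdot f$ factors as $\Pi^*\circ \overline{\Phi_a}^*$. The first factor $\overline{\Phi_a}^* : K\{F\} \to K\{F/\tor(a,F)\}$ is an isomorphism of $\kt$-modules (because $\overline{\Phi_a}$ is an isomorphism of $q$-varieties), and the second factor is the canonical injection $K\{F/\tor(a,F)\} \hookrightarrow K\{F\}$ of Remark \ref{exactsequence}. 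Therefore $a\cdot K\{F\} = K\{F/\tor(a,F)\}$ as submodules of $K\{F\}$, and the exact sequence of Remark \ref{exactsequence} yields $K\{F\}/a\cdot K\{F\} \simeq K\{\tor(a,F)\}$. Since $\tor(a,F)$ is a finite $\Fq$-vector space, polynomial interpolation identifies $K\{\tor(a,F)\}$ with $\Hom_{\Fq}(\tor(a,F), K)$, whose $K$-dimension equals $\dim_\Fq \tor(a,F)$, completing the proof.
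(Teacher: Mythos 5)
Your proof is correct, but it takes a genuinely different route from the paper's. The paper works in coordinates: taking $F=K^n$, it diagonalizes the submodule $a\cdot\lan$ using Lemma \ref{diag}, notes that $\tor(a,F)=Z(a\cdot\lan)$ together with finiteness forces $n$ nonzero diagonal generators $P_i$, uses $d(\Phi_a)=\delta(a)\Id$ with $\delta(a)\neq 0$ to see that each $P_i$ is separable, and then reads off both sides of the formula as $\sum_i \deg_\tau P_i$. You instead prove that $\Phi_a$ is a separable morphism (surjectivity of $\Phi_a$ from irreducibility and $\dim\Phi_a(F)=\dim F$, then the tangent-space criterion of Section 5, with $d(\Pi)$ bijective by the count $\dim_K T(F/\tor(a,F))=\dim F=\dim_K T(F)$), so that $\overline{\Phi_a}^*$ is an isomorphism and $a\cdot K\{F\}$ is exactly the image of $\Pi^*$, i.e.\ the copy of $K\{F/\tor(a,F)\}$ inside $K\{F\}$; Remark \ref{exactsequence} then yields a $\kt$-module isomorphism $K\{F\}/a\cdot K\{F\}\simeq K\{\tor(a,F)\}$, which is sharper than the bare dimension identity and bypasses Lemma \ref{diag} entirely, while making the role of $\delta(a)\neq 0$ (separability) transparent; the paper's computation is shorter and more explicit. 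The one place you are too brisk is the final step: the interpolation result the paper cites (Goss, Ch.~1.3) is one-variable, so the equality $\dim_K K\{H\}=\dim_{\Fq}H$ for the finite subspace $H=\tor(a,F)\subset K^n$ needs a word of justification — either compose with a $K$-linear form $K^n\to K$ injective on $H$ (such a form exists since $K$ is infinite and $H$ is finite) and then interpolate in one variable, or argue as the paper does elsewhere: up to an automorphism, $M(H)=\kt Q_1(X_1)+\ldots+\kt Q_n(X_n)$ with each $Q_i$ separable because $M(H)$ is radical, whence $\dim_K K\{H\}=\sum_i\deg_\tau Q_i=\dim_{\Fq}H$. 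With that sentence added, your argument is complete.
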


\begin{proof} Since $F$ is irreducible, we can suppose that $F = K^n$, so that $ K\{F\} = \lan$.
By lemma \ref{diag}, up to automorphisms, there exit $P_1(X_1),\ldots,P_r(X_r)\in \lan$ such that
$a\cdot\lan = \kt P_1(X_1)\oplus\ldots\oplus\kt P_r(X_r)$. It is clear that 
$$\tor(a,F) = Z(P_1(X_1),\ldots,P_r(X_r))= \ker P_1\times\ldots\times\ker P_r\times K^{n-r}.$$
Hence $r = n$ because $\tor(a,F)$ is finite.
Since $d(\Phi_a) = \delta(a)\Id$ with $\delta(a) \not = 0$, the $P_i$ must be separable. It follows
that 
$$\dim_\Fq\tor(a,F) = \sum_{i=1}^n\deg_\tau P_i = \dim_K \lan/a\cdot\lan.$$
\end{proof}

\begin{proposition}\label{tatemodule}
Let $(F,\Phi)$ be an irreducible $A$-module and $\pi \in A\setminus \ker \delta$
be a prime. Then there exists $r\in \N$ such that for all $n > 0$
$$\tor(\pi^n,F)= (A/\pi^nA)^r.$$
\end{proposition}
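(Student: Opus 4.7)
The plan is to exploit the irreducibility of $F$ to show that $\Phi_\pi$ is surjective on $F$ (not merely finite-kernel), and then to propagate this surjectivity to the $\pi$-power torsion subgroups. First I would show that $\Phi_\pi : F \longrightarrow F$ is surjective: by Theorem~\ref{fintor} (and its proof) combined with Theorem~\ref{rang}, we have $\dim \Phi_\pi(F) = \dim F$, so $F/\Phi_\pi(F)$ has dimension $0$, hence is finite. By Theorem~\ref{direct}, $\Phi_\pi(F)$ is a $q$-variety, and since $F$ is irreducible, the only sub-$q$-variety of finite index is $F$ itself, forcing $\Phi_\pi(F) = F$. The same argument applies to $\Phi_{\pi^n}$.

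Next I would deduce that for every $n \geq 0$ the restriction
$$\Phi_\pi : \tor(\pi^{n+1},F) \longrightarrow \tor(\pi^n,F)$$
is surjective with kernel $\tor(\pi,F)$: surjectivity follows because any $x \in \tor(\pi^n,F)$ can be lifted via the surjective $\Phi_\pi$ on $F$ to some $y \in F$, and automatically $y \in \tor(\pi^{n+1},F)$; the kernel is obvious. Iterating gives
$$\Card \tor(\pi^n,F) = \Card \tor(\pi,F)^{\,n}.$$
Set $r := \dim_{A/\pi A}\tor(\pi,F)$, so that $\Card \tor(\pi,F) = q^{r\deg\pi}$ and $\Card\tor(\pi^n,F) = q^{nr\deg\pi}$.

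Finally I would conclude by invoking the structure theorem for finitely generated modules over the PID $A$ (applied to the finite $A/\pi^nA$-module $\tor(\pi^n,F)$): write
$$\tor(\pi^n,F) \;\simeq\; \bigoplus_{i=1}^k A/\pi^{a_i}A, \qquad 1\leq a_i \leq n.$$
The $\pi$-torsion submodule of this direct sum is $(A/\pi A)^k$, and it must coincide with $\tor(\pi,F)\simeq (A/\pi A)^r$, forcing $k=r$. Comparing $\Fq$-cardinalities gives $\sum_{i=1}^r a_i\deg\pi = nr\deg\pi$, so $\sum a_i = nr$ with each $a_i \leq n$, which forces $a_i = n$ for every $i$. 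Thus $\tor(\pi^n,F) \simeq (A/\pi^nA)^r$, with the same $r$ for every $n$. The only subtle step is the very first one — that $\Phi_\pi$ is surjective on $F$ — since everything else is then a clean exact-sequence/structure-theorem argument; but this surjectivity is exactly what irreducibility buys us on top of Theorem~\ref{fintor}.
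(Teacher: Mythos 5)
Your proof is correct and follows essentially the same route as the paper: surjectivity of $\Phi_\pi$ on $F$ from irreducibility together with Theorems \ref{fintor} and \ref{rang}, the resulting surjection $\tor(\pi^{n+1},F)\to\tor(\pi^n,F)$ with kernel $\tor(\pi,F)$ to count cardinalities, and the elementary divisor theorem plus the identification of the $\pi$-torsion submodule to force all exponents to equal $n$. The only cosmetic difference is that you treat each $n$ directly where the paper runs an induction on $n$; the key ideas are identical.
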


\begin{proof} Since $\dim \ker \Phi_\pi = 0$ by theorem \ref{fintor}, 
$\dim F = \dim \Phi_\pi(F)$. But $\Phi_\pi(F)\subset
F$ and $F$ is irreducible, so $\Phi_\pi(F) = F$, hence $\Phi_\pi$ is surjective.

By construction $\tor(\pi,F)$ is an $A/\pi A$-vector space which is finite by \ref{fintor}. 
Let $r$ be its dimension : $\tor(\pi,F)= (A/\pi A)^r$. Suppose that for some $n > 0$,
$\tor(\pi^n,F)= (A/\pi^nA)^r$. Using the elementary divisors theorem, there exists integers
$0< n_1 \leq n_2\leq \ldots \leq n_s\leq n+1$ such that
$$\tor(\pi^{n+1},F)= A/\pi^{n_1}A\times A/\pi^{n_2}A\times\ldots\times A/\pi^{n_s}A.$$
Considering $\tor(\pi,F)\subset \tor(\pi^{n+1},F)$, we get immediatly $s = r$. Furthermore,
the map $\Phi_\pi : \tor(\pi^{n+1},F)\longrightarrow \tor(\pi^{n},F)$ is clearly surjective 
with kernel equal to $\ker \Phi_\pi$. Hence $\Card \tor(\pi^{n+1},F) =
\Card\tor(\pi^{n},F)\times\Card \tor(\pi,F)$.
It implies that $n_1+n_2+\ldots+n_r = rn + r = r(n+1)$.
Since $n_i \leq n+1$, we must have $n_i = n+1$ for all $1\leq i\leq r$.
\end{proof}

\begin{example} In the following example, $\delta$ is supposed to be the inclusion map
and  $F = K^2$. Let $\Phi$ be the $A$-module defined by
$$
\Phi_T = 
\begin{pmatrix}
T\tau^0+\tau^2 & \tau \\
T\tau & T\tau^0
\end{pmatrix}.
$$
Let $\pi = T$. The elements of $\tor(\pi,F)$ are the solutions of
$$
\left\{
\begin{aligned}
Tx_1+x_1^{q^2}+x_2^q &= 0\\
Tx_1^q+Tx_2 &= 0
\end{aligned}
\right.
$$
The second equation implies that $x_2 = -x_1^q$, hence $x_2^q = -x_1^{q^2}$. Replacing $x_2^q$ by 
$-x_1^{q^2}$ in the first equation, we get $Tx_1 = 0$. It follows that $\tor(\pi,F)= \{0\}$ and,
by proposition \ref{tatemodule}, $\tor(\pi^n,F)= \{0\}$ for all $n > 0$.

Now let $\pi = T-1$.  The elements of $\tor(\pi,F)$ are the solutions of
$$
\left\{
\begin{aligned}
(T-1)x_1+x_1^{q^2}+x_2^q &= 0\\
Tx_1^q+(T-1)x_2 &= 0
\end{aligned}
\right.
$$
The second equation implies that $x_2 =-\frac{T}{T-1} x_1^q$, hence 
$x_2^q = -\frac{T^q}{T^q-1}x_1^{q^2}$. Replacing $x_2^q$ in the first equation, 
we get $Tx_1+(1-\frac{T^q}{T^q-1})x_1^{q^2} = 0$. It follows that $\dim_\Fq\tor(\pi,F)= 2$
and,
by proposition \ref{tatemodule}, $\tor(\pi^n,F) = (A/\pi^nA)^2$ for all $n > 0$.
\end{example}

We show now that $r$ is almost independant of $\pi$.

\begin{theorem}\label{rangconstant}
Let $(F,\Phi)$ be an $A$-module and $r(F)$ be its rank. Then there exists $c \in A\setminus \{0\}$ such that
for all $a\in A$, prime to $c$,
$$\tor(a,F) = (A/aA)^{r(F)}.$$
\end{theorem}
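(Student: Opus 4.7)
The plan is to reduce to the case where $F$ is irreducible, use Proposition~\ref{formuledim} to translate the torsion count into a $K$-dimension computation on $M:=K\{F\}$, and then establish a free-module description of $M$ over $K[T]$ after inverting one nonzero element.

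\textbf{Step 1 (reduction to irreducible $F$).} The action of $A$ preserves $\mathring{F}$, so $0\to\mathring{F}\to F\to F/\mathring{F}\to 0$ is exact in the category of $A$-modules. Since $F/\mathring{F}$ is finite, it is annihilated by some $c_1\in A\setminus\{0\}$; for $a$ prime to $c_1$, $\Phi_a$ is invertible on $F/\mathring{F}$ and the snake lemma yields an $A$-module isomorphism $\tor(a,F)\simeq\tor(a,\mathring{F})$. Remark~\ref{exactsequence} combined with the fact that $K\{F/\mathring{F}\}$ is $K$-finite-dimensional (hence $K[T]$-torsion) gives $r(F)=r(\mathring{F})$. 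I may therefore assume $F$ irreducible, and Proposition~\ref{formuledim} reduces the problem to computing $\dim_K M/aM$.

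\textbf{Step 2 (freeness of $M$ after localization).} I claim there exists $c_2\in K[T]\setminus\{0\}$ such that $M[1/c_2]$ is free of rank $r(F)$ over $K[T][1/c_2]$. Indeed, the proof of Theorem~\ref{rankphi} produces, for each of the $\kt$-generators $\tau_i^0$ of $M$, a relation $\sum_j Q_j^{(i)}(T)\tau_i^j=0$ in $M$ with leading coefficient $Q^{(i)}_{s_i}\in K[T]\setminus\{0\}$. Inverting the product of these leading coefficients makes $M$ finitely generated over the localized ring, and a further localization killing the (now finitely generated) $K[T]$-torsion produces a module that is finitely generated and torsion-free over a PID, hence free. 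Its rank equals $\dim_{K(T)} M\otimes_{K[T]}K(T)=r(F)$.

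\textbf{Step 3 (passage from $K[T]$ to $A$ and conclusion).} To extract an element of $A$, let $\{\alpha_j\}$ be the roots of $c_1c_2$ in $K$ and set $c\in A\setminus\{0\}$ to be the product of the $\Fq$-minimal polynomials of those $\alpha_j$ algebraic over $\Fq$, multiplied by the generator of $\ker\delta$ if nonzero. Any nonzero $a\in A$ prime to $c$ then satisfies $\delta(a)\ne 0$, is prime to $c_1$, and is coprime to $c_2$ in $K[T]$. By Bézout in $K[T]$, $c_2$ is invertible in $M/aM$, so
\[
M/aM \simeq M[1/c_2]/aM[1/c_2] \simeq \bigl(K[T][1/c_2]/aK[T][1/c_2]\bigr)^{r(F)} \simeq \bigl(K[T]/aK[T]\bigr)^{r(F)},
\]
of $K$-dimension $r(F)\deg a$. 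Combined with Step 1 and Proposition~\ref{formuledim}, $\dim_\Fq \tor(a,F)=r(F)\deg a$. The $A$-module structure now follows from Proposition~\ref{tatemodule}: for a prime $\pi$ coprime to $c$, $\tor(\pi^n,F)\simeq(A/\pi^nA)^{r_\pi}$, and the dimension count forces $r_\pi=r(F)$. For general $a=\prod_i\pi_i^{e_i}$ coprime to $c$, primary decomposition together with the Chinese Remainder Theorem in $A/aA$ gives $\tor(a,F)\simeq\bigoplus_i(A/\pi_i^{e_i}A)^{r(F)}\simeq(A/aA)^{r(F)}$.

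The main obstacle is Step 2, where one must show that, after inverting a single element of $K[T]$, $K\{F\}$ becomes a finitely generated and torsion-free $K[T]$-module; once this is in place, the slightly delicate passage in Step 3 from bad primes of $K[T]$ to bad primes of $A$ is bookkeeping, and the structural conclusion follows formally from the already-available Proposition~\ref{tatemodule}.
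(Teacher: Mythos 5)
Your Steps 1 and 3 are sound and match the paper's reduction (to $\mathring{F}$, then Proposition \ref{formuledim}, Proposition \ref{tatemodule} and CRT), but Step 2, which is the heart of the matter, contains a genuine gap. The claim that inverting a single $c_2\in K[T]$ makes $M=K\{F\}$ finitely generated, and then free of rank $r(F)$, over $K[T][1/c_2]$ is false in general. The relation $\sum_j Q_j^{(i)}(T)\tau_i^j=0$ from Theorem \ref{rankphi} only lets you solve for $\tau_i^{s_i}$ after inverting $Q^{(i)}_{s_i}$; to reach $\tau_i^{s_i+m}$ you must apply $\tau$, which Frobenius-twists the coefficients, so the leading coefficient becomes $\tau^m(Q^{(i)}_{s_i})$. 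These form an infinite family of (generally pairwise non-associate) polynomials, and no single localization inverts them all. Concretely, for the trivial module $\Phi_a=\delta(a)\tau^0$ on $K$ in generic characteristic one has $K\{F\}\simeq \bigoplus_{m\geq 0}K[T]/(T-\delta(T)^{q^m})$, a torsion module supported at infinitely many distinct points of $K$; after inverting any single $c_2$ it is still neither finitely generated nor free of rank $r(F)=0$. The same phenomenon occurs for any non-abelian module whose torsion is supported off $\overline{\Fq}$, which is exactly the situation the theorem is designed to cover.

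The paper avoids this by never localizing $\lan$: it takes $M$ to be the \emph{free} $K[T]$-submodule generated by representatives $f_1,\ldots,f_{r(F)}$ of a $K(T)$-basis of $K(F)$, finds one $P\in K[T]\setminus\{0\}$ with $P\tau_i^j\in M$ for $j\leq d$, and then shows $P\tau(P)\cdots\tau^j(P)\,\tau_i^{d+j}\in M$ for all $j$. The infinite family of bad factors is then tamed by the key arithmetic observation your proposal is missing: any $a\in A=\Fq[T]$ satisfies $\tau(a)=a$, so if $a$ is coprime to $P$ in $K[T]$ it is automatically coprime to every $\tau^j(P)$ and hence to all the products $P\tau(P)\cdots\tau^j(P)$. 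B\'ezout then gives directly that $M/aM\to\lan/a\lan$ is an isomorphism for every such $a$, which is what your Step 3 needs in place of the false freeness claim. If you replace your Step 2 by this comparison argument (keeping your translation from roots of $P$ to an element $c\in A$, which is fine), the rest of your proof goes through.
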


We start with two lemmas

\begin{lemma}\label{rangexact}
Let $(F,\Phi)$ be an $A$-module and $H\subset F$ be a submodule. Then
$$r(F) = r(H) +r(F/H).$$
\end{lemma}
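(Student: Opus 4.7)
The plan is to deduce this from the short exact sequence of remark \ref{exactsequence} by checking $A$-equivariance and then applying exactness of localization at $K[T]\setminus\{0\}$.

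First I would verify that the canonical short exact sequence of $\kt$-modules
$$0\longrightarrow K\{F/H\}\longrightarrow K\{F\}\longrightarrow K\{H\}\longrightarrow 0$$
is in fact a short exact sequence of $K[T]$-modules, i.e.\ of $K\otimes_{\Fq}A$-modules. The maps are: on the left, pullback along the projection $\Pi : F\longrightarrow F/H$, which sends $g\in K\{F/H\}=\mor(F/H,K)$ to $g\circ\Pi$; on the right, restriction $f\mapsto f_{|H}$. Since $H$ is an $A$-submodule, both $\Phi_a$ and the induced $\Psi_a$ on $F/H$ satisfy $\Pi\circ\Phi_a=\Psi_a\circ\Pi$ and $\Phi_{a|H}$ is the $A$-action on $H$. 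Using the definition $a\cdot f=f\circ\Phi_a$, a one-line check shows that both maps commute with the $A$-action, so the sequence is a short exact sequence of $K[T]$-modules.

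Next, I would apply the functor $K(T)\otimes_{K[T]}(-)$. Because $K(T)$ is the localization of the commutative ring $K[T]$ at the multiplicative set $K[T]\setminus\{0\}$, it is a flat $K[T]$-module. Hence tensoring preserves exactness, yielding a short exact sequence of $K(T)$-vector spaces
$$0\longrightarrow K(F/H)\longrightarrow K(F)\longrightarrow K(H)\longrightarrow 0.$$
By Theorem \ref{rankphi}, all three spaces are finite-dimensional over $K(T)$, so additivity of dimension in short exact sequences of vector spaces gives
$$r(F)=\dim_{K(T)}K(F)=\dim_{K(T)}K(F/H)+\dim_{K(T)}K(H)=r(F/H)+r(H).$$

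The only real content is the $A$-equivariance verification in the first paragraph; once that is in place, the rest is flatness of localization plus the finiteness already established in Theorem \ref{rankphi}. No step looks like it should cause serious difficulty, so I would not expect a significant obstacle, though one should take a moment to be careful that the $K[T]$-module structure on $K\{F\}$ used in Theorem \ref{rankphi} is the same as the one induced termwise from remark \ref{exactsequence}, which is immediate from the definition $a\cdot f=f\circ\Phi_a$.
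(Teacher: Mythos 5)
Your proof is correct and follows exactly the paper's own route: take the short exact sequence of Remark \ref{exactsequence}, check it is $K[T]$-equivariant, tensor with $K(T)$ using flatness of localization, and conclude by additivity of dimension. Your explicit verification of the $A$-equivariance simply fills in what the paper dismisses as ``easy to check.''
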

\begin{proof}
By remark \ref{exactsequence}, we have an exact sequence of $\kt$-modules
$$0\rightarrow K\{F/H\} \rightarrow K\{F\}\rightarrow K\{H\}\rightarrow 0.$$
It is easy to check that is also a sequence of $K[T]$-modules. Since a localisation
is flat, we get an exact sequence of $K(T)$-vector spaces
$$0\rightarrow K(F/H) \rightarrow K(F)\rightarrow K(H)\rightarrow 0.$$
This proves the lemma.
\end{proof}

\begin{lemma}\label{torexact}
Let $(F,\Phi)$ be an $A$-module and $H\subset F$ be a submodule. Then there exists
$c \in A\setminus \{0\}$ such that
for all $a\in A$, prime to $c$, the following sequence is exact :
$$0\rightarrow \tor(a,H)\rightarrow \tor(a,F)\rightarrow \tor(a,F/H) \rightarrow 0.$$
\end{lemma}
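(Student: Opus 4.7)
The plan is to apply the snake lemma to the short exact sequence of $A$-modules
$0 \to H \to F \to F/H \to 0$
with vertical maps given by $\Phi_a$, which directly produces the exact sequence
$$0 \to \tor(a,H) \to \tor(a,F) \to \tor(a,F/H) \to H/\Phi_a(H) \to F/\Phi_a(F) \to (F/H)/\Phi_a(F/H) \to 0.$$
Hence it suffices to exhibit $c \in A \setminus \{0\}$ such that $\Phi_a(H) = H$ whenever $a$ is prime to $c$: the vanishing of $H/\Phi_a(H)$ kills the connecting map $\tor(a, F/H) \to H/\Phi_a(H)$ and yields the desired three-term sequence.

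To establish this surjectivity, I would treat separately the irreducible component $\mathring{H}$ (which is an $A$-submodule by the proposition preceding Theorem \ref{rankphi}) and the finite quotient $H/\mathring{H}$ (finite by Lemma \ref{compo}). On $\mathring{H}$: for any $a \notin \ker \delta$, Theorem \ref{fintor} gives that $\tor(a, \mathring{H})$ is finite, and Theorem \ref{rang} applied to $\Phi_a|_{\mathring H} : \mathring H \to \mathring H$ then forces $\dim \Phi_a(\mathring H) = \dim \mathring H$. The sub-$q$-variety $\Phi_a(\mathring H) \subset \mathring H$ thus has quotient of dimension $0$, hence of finite index, and the irreducibility of $\mathring H$ gives $\Phi_a(\mathring H) = \mathring H$.

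On the finite quotient $H/\mathring{H}$: being a finite $A$-module, it is killed by its annihilator, which is generated by some $b \in A \setminus \{0\}$ (for instance the characteristic polynomial of $\Phi_T$ acting on this finite $\Fq$-vector space). Whenever $\gcd(a, b) = 1$, a Bézout relation $ua + vb = 1$ in $A$ shows that $\Phi_u \circ \Phi_a$ acts as the identity on $H/\mathring H$, so $\Phi_a$ is an automorphism of $H/\mathring H$. Setting $c = b$ when $\delta$ is injective and $c = b\pi$ where $(\pi) = \ker \delta$ otherwise, both conditions hold for $a$ prime to $c$; a short five-lemma argument applied to $0 \to \mathring H \to H \to H/\mathring H \to 0$ then yields $\Phi_a(H) = H$.

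The argument is largely formal once the snake lemma is invoked; the main point of substance is the irreducibility step showing $\Phi_a(\mathring H) = \mathring H$, which leans on Theorem \ref{rang} and the defining property of irreducibility. Everything else reduces to the elementary structure of finite $A$-modules and routine diagram chasing, so I do not anticipate a real obstacle.
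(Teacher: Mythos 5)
Your proof is correct and follows essentially the same route as the paper: both reduce the statement to the surjectivity of $\Phi_a$ on $H$ (your snake-lemma packaging is just the paper's explicit lifting chase), and both establish that surjectivity by splitting $H$ into its irreducible component $\mathring{H}$ (handled via Theorems \ref{fintor} and \ref{rang} and irreducibility, exactly as in the proof of Proposition \ref{tatemodule}) and the finite quotient $H/\mathring{H}$ (killed by some nonzero $b$, so that $\Phi_a$ acts invertibly when $a$ is prime to $b$). Your explicit choice $c = b$ or $b\pi$ with $(\pi)=\ker\delta$ is a welcome clarification of a point the paper leaves implicit.
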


\begin{proof}
The only non obvious part is that $\tor(a,F)\rightarrow \tor(a,F/H)$ is surjective. 
Let $\mathring{H}$ be the irreducible component of $H$. Since $H/\mathring{H}$ is finite, there exists
$c' \in A\setminus \{0\}$ such that $\Psi_{c'}(H/\mathring{H}) = 0$ where $\Psi$ is the induced $A$-module 
structure. It follows easily that for all $a\in A$,
prime to $c'$, $\Psi_a : H/\mathring{H} \rightarrow H/\mathring{H}$ is surjective.

Suppose that $a$ is also prime to $\ker \delta$. Hence 
$\Phi_a : \mathring{H} \rightarrow \mathring{H}$ is surjective 
(see proof of proposition \ref{tatemodule}). Let $y \in H$, then there exists $x\in H$ such that
$y \equiv \Phi_a(x) \mod \mathring{H}$. It means that $y-\Phi_a(x) \in \mathring{H}$. But there exists 
$z\in \mathring{H}$ such that $y-\Phi_a(x) = \Phi_a(z)$, hence $y = \Phi_a(x+z)$. It proves that
$\Phi_a : H \rightarrow H$ is surjective.

Let $\Pi : F \rightarrow F/H$ be the canonical surjection and $y\in F$ such that
$\Pi(y)\in \tor(a,F/H)$. By construction, $\Phi_a(y)\in H$. Since $\Phi_a: H \rightarrow H$
is surjective, there
exists $x\in H$ such that $\Phi_a(y) = \Phi_a(x)$. Hence $y-x\in \tor(a,F)$ and $\Pi(y-x) = \Pi(y)$.
This proves the lemma.
\end{proof}

\begin{proof}[Proof of Theorem \ref{rangconstant}]
Let $(F,\Phi)$ be an $A$-module and $\mathring{F}$ be its irreducible component. Since $F/\mathring{F}$
is finite, $K\{F/\mathring{F}\}$ has finite dimension over $K$. It implies that it is a $K[T]$ torsion
module, hence $r(F/\mathring{F}) = 0$. We then have $r(F) = r(\mathring{F})$ by lemma \ref{rangexact}.

Furthermore, there exist $c'\not = 0$ such that $\Psi_{c'}(F/\mathring{F}) = 0$. It implies that for all
$a\in A$ prime to $c'$, we have $\tor(a,F/\mathring{F}) = 0$. Let $c$ given by lemma \ref{torexact}, then,
for all $a\in A$ prime to $cc'$, $\tor(a,F) = \tor(a,\mathring{F})$.

So, without loss of generality, we can suppose that $F$ is irreducible, hence $F=K^n$ and 
$K\{F\} = \lan$. We can find $f_1,\ldots,f_{r(F)}\in K\{F\}$ such that there images in $K(F)$ form a basis.
Let $M \subset K\{F\}$ be the $K[T]$-module generated by  $f_1,\ldots,f_{r(F)}$. Since the images of 
 $f_1,\ldots,f_{r(F)}$ are linearly independant over $K(T)$, the  $f_1,\ldots,f_{r(F)}$ themselves are linearly
independant over $K[T]$. Hence $M$ is a free $K[T]$-module of rank $r(F)$.

Let $d \in \N$ strictly greater than the degrees of  $f_1,\ldots,f_{r(F)}$. Since the images 
of  $f_1,\ldots,f_{r(F)}$ form a basis of $K(F)$, for any $f\in K\{F\}$, one can find 
$P\in K[T]\setminus\{0\}$ such
that $Pf\in M$. So it is possible to find $P\in K[T]\setminus\{0\}$ such that for all $1\leq i\leq n$
and $j \leq d$,
$$P \tau_i^j \in M.$$
In particular, since $\tau(M)$ is included in the $K$-vector space generated by the 
$\tau_i^j,\ 1\leq i\leq n$
and $j \leq d$, we have $P\tau(M)\subset M$. It implies that for all $1\leq i\leq n$,
$$P\tau(P) \tau_i^{d+1} =P\tau(P\tau_i^d)\in P\tau(M) \subset M$$
where $\tau(\sum_{j=0}^s p_jT^j) = \sum_{j=0}^s \tau(p_j)T^j$. By an easy induction, we get that
for all $1\leq i\leq n$
and $j \in \N$,
$$P\tau(P)\tau^2(P)\ldots\tau^j(P)\tau_i^{d+j}\in M.$$
Let $a\in A \setminus \{0\}$ prime to $P$. Hence $\tau(a)$ is prime to $\tau(P)$ 
($\tau$ induces an automorphism of $K[T]$). But $\tau(a) = a$ since $a\in A = \Fq[T]$, hence
$a$ is prime to $P\tau(P)$.  By induction, 
we get that $a$ is prime to $P\tau(P)\tau^2(P)\ldots\tau^j(P)$ for any $j\in \N$.

The inclusion $M\subset \lan$ induces a morphism $M/aM \rightarrow \lan/a\lan$. We want to prove that it
is an isomorphism.

Let $f \in \lan$. Taking $j$ such that $d+j$ is greater than the degree of $f$, we have
$$P\tau(P)\tau^2(P)\ldots\tau^j(P)f \in M.$$
Since $a$ is prime to $P\tau(P)\tau^2(P)\ldots\tau^j(P)$, there exit $u,v \in K[T]$ satisfaying
$ua+vP\tau(P)\tau^2(P)\ldots\tau^j(P) = 1$, hence
\begin{align*}
f &= auf+vP\tau(P)\tau^2(P)\ldots\tau^j(P)f\\
& \equiv vP\tau(P)\tau^2(P)\ldots\tau^j(P)f \mod a\lan\\
&\in M \mod a\lan.
\end{align*}
It follows that the morphism is surjective.

Now, let $f\in M\cap a\lan$, so there exists $\lambda \in \lan$ such that $f = a\lambda$. As before,
taking $j$ such that $d+j$ is greater than the degree of $\lambda$, we have
$$P\tau(P)\tau^2(P)\ldots\tau^j(P)\lambda \in M.$$
Since $a$ is prime to $P\tau(P)\tau^2(P)\ldots\tau^j(P)$, there exit $u,v \in K[T]$ satisfaying
$ua+vP\tau(P)\tau^2(P)\ldots\tau^j(P) = 1$, hence
\begin{align*}
\lambda &= ua\lambda+vP\tau(P)\tau^2(P)\ldots\tau^j(P)\lambda\\
& =uf+vP\tau(P)\tau^2(P)\ldots\tau^j(P)\lambda
\in M.
\end{align*}
It follows that the morphism is injective.

So for all $a$ prime to $P$, we have $M/aM = \lan/a\lan$. Since $M$ is a free $K[T]$-module of rank $r(F)$,
$\dim_K \lan/a\lan = \dim_K M/aM = r(F)\deg_T a$. If $a$ is also prime to $\ker \delta$, proposition 
\ref{formuledim} implies that 
$$\dim_\Fq \tor(a,F) = r(F)\deg_T a.$$
Applying this formula in the special case $a = \pi$ a prime polynomial, we get
$$\dim_{A/\pi A} \tor(a,F) = r(F).$$
Now proposition \ref{tatemodule} says that for all $m >0$, 
$$\tor(\pi^m,F) = (A/\pi^m A)^{r(F)}.$$
We conclude the proof using chinese remainder theorem and 
$\tor(ab,F) =\tor(a,F)\times\tor(b,F)$ if $a$ and $b$ are coprime.
\end{proof}

\section{Jacobian}

Let $X \subset K^n$ be an affine algebraic curve. Roughly speaking, 
the Jacobian of $X$ is the smallest abelian variety containg $X$.
We want to define an analogue in our situation. In the classical case, we have 
the canonical action of $\Z$ on $K$ 
which induces a diagonal action on $K^n$. For $q$-varieties, we must choose the $A$-module structure.
This leads to the following definition.

\begin{defi}\label{jacob}
Let $(F,\Phi)$ be an $A$-module and $H\subset F$. Let $\jac_\Phi(H)$ be the 
intersection of all $A$-modules in $F$ containing $H$. It is clear that $\jac_\Phi(H)$
is an $A$-module and that it is the smallest $A$-module containing $H$. Note that
if $H$ is an irreducible $q$-variety then $\jac_\Phi(H)$ is also irreducible since the 
irreducible component of an $A$-module is an $A$-module.
\end{defi}

\begin{proposition}\label{picard}
Let $(F,\Phi)$ be an $A$-module 
and $H\subset F$ be an irreducible $q$-variety. Define the
Picard module associated to $H$ by 
$\pic(H) := A\otimes_\Fq H$. Then the canonical map
$$
\begin{array}{rcl}
\pic(H) & \longrightarrow & \jac_\phi(H) \\
 a\otimes x& \mapsto & \Phi_a(x)
\end{array}
$$
is surjective.
\end{proposition}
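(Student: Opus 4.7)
Let $G \subset F$ denote the image of the canonical map, so that $G = \sum_{a \in A} \Phi_a(H)$, with the sum taken in the $\Fq$-vector space $F$. The plan is to show that $G$ is already an $A$-submodule of $F$; once this is done, $G$ contains $H$ (take $a=1$), and by the minimality of $\jac_\Phi(H)$ we get $\jac_\Phi(H) \subset G$, which combined with the obvious inclusion $G \subset \jac_\Phi(H)$ gives surjectivity.

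Stability of $G$ under the $A$-action is immediate from $\Phi_b \circ \Phi_a = \Phi_{ba}$, so the real issue is to show that $G$ is a $q$-variety. Since $A = \Fq[T]$, I would rewrite $G = \sum_{i \geq 0} \Phi_{T^i}(H)$ and introduce the finite partial sums
\[
G_N \;=\; \sum_{i=0}^{N} \Phi_{T^i}(H).
\]
Each $\Phi_{T^i}(H)$ is the image of the irreducible $q$-variety $H$ under a morphism, hence irreducible by proposition \ref{sommeimageirred}(1); iterating proposition \ref{somme} and proposition \ref{sommeimageirred}(2) then shows that $G_N$ is itself an irreducible $q$-variety, and we have an increasing chain $G_0 \subset G_1 \subset G_2 \subset \cdots \subset F$.

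The main obstacle is to force this chain to stabilize. Here I would use the dimension theory of section~4: theorem \ref{dimen} gives $\dim G_N \leq \dim F < \infty$, so the non-decreasing sequence $(\dim G_N)$ eventually reaches some value. As soon as $\dim G_N = \dim G_{N+1}$, theorem \ref{rang} applied to the inclusion $G_N \hookrightarrow G_{N+1}$ yields $\dim(G_{N+1}/G_N) = 0$, hence $G_{N+1}/G_N$ is finite; but $G_{N+1}$ is irreducible, so definition \ref{irred} forces $G_N = G_{N+1}$. Once we have $\Phi_{T^{N+1}}(H) \subset G_N$, applying the morphism $\Phi_T$ gives
\[
\Phi_{T^{N+2}}(H) = \Phi_T\bigl(\Phi_{T^{N+1}}(H)\bigr) \subset \Phi_T(G_N) \subset G_{N+1} = G_N,
\]
and by an immediate induction $\Phi_{T^m}(H) \subset G_N$ for every $m \geq N+1$. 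Therefore $G = G_N$ is a $q$-variety, which completes the argument.
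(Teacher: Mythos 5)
Your argument is correct and is essentially the paper's own proof: the paper also forms the partial sums $H_n = H+\Phi_T(H)+\cdots+\Phi_{T^n}(H)$, notes they are irreducible $q$-varieties by propositions \ref{somme} and \ref{sommeimageirred}, forces stabilization by the dimension bound of theorem \ref{dimen} (via bounded chains of irreducibles rather than your dimension-count plus finite-index step), and concludes that the stable $H_n$ is $\Phi_T$-stable, hence an $A$-module equal to $\jac_\Phi(H)$. The only nitpick is that applying theorem \ref{rang} to the inclusion $G_N\hookrightarrow G_{N+1}$ gives nothing by itself; what you want is the remark following it, applied to the projection $G_{N+1}\rightarrow G_{N+1}/G_N$, which indeed yields $\dim(G_{N+1}/G_N)=0$ and hence finiteness.
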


\begin{proof}
For $n\in \N$, define $H_n = H+\Phi_T(H)+\Phi_{T^2}(H)+\ldots+\Phi_{T^n}(H)$. 
Since the image and 
the sum of irreducibles are irreducible (see proposition \ref{sommeimageirred}), 
$H_n$ is irreducible. But the length of a chain of irreducibles is bounded by $\dim F$, so
there exists $n\in \N$ such that $H_{n+1} = H_n$.
It means that $\Phi_{T^{n+1}}(H)\subset 
H+\Phi_T(H)+\Phi_{T^2}(H)+\ldots+\Phi_{T^n}(H)$. It implies immediatly that
$H_n$ is stable by $\Phi_T$, hence $H_n$
is an $A$-module and it is easy to check
that any $A$-module containing $H$ must contain $H_n$, so $H_n = \jac_\Phi(H)$. This proves
the proposition.
\end{proof}

\begin{remark} The previous proposition
might not be true if $H$ is not supposed irreducible as shown in the following example. Let
$F= (K,\Phi)$ be an $A$-module, $x\in K$ not a torsion point and $H = \Fq x$. 
Then $\jac_\Phi(H) = F$ because it contains the free $A$-module of rank $1$ generated by $x$.
But this module, which is the
image of  $\pic(H)$, can not be equal to $F$ since $F$ has infinite 
rank by \cite{poonen}.
\end{remark}

\section{Some conjectures}

In \cite{denis}, L. Denis proposed three conjectures for $A$-modules of generic characteristic 
(i.e. $\ker \delta = \{0\}$). We give
an analogue of these conjectures. Indeed these analogues can be seen as special cases of Denis conjectures.

In the sequel, we suppose that $\delta : A \longrightarrow K$ is the inclusion map.

Let $(F,\Phi)$ be an $A$-module 
and $H\subset F$ be a $q$-variety. Let $x_1,\ldots,x_r \in F$ and $\Gamma = Ax_1+\ldots+Ax_r$ be the
module generated by $x_1,\ldots,x_r$ in~$F$.

The first conjecture is an analogue of Faltings theorem, see \cite{faltings}.

\begin{conjecture}\label{faltconj}
There exists $G \subset H$ an $A$-module such that
$G\cap \Gamma$ has finite index in $H\cap \Gamma$.
\end{conjecture}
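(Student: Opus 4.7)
The plan is to reduce the conjecture to a Mordell--Lang style finiteness claim in the category of $A$-modules. First I would choose an $A$-submodule $G$ of $F$ contained in $H$ of maximal dimension. Such a $G$ exists because sub-$q$-varieties have bounded dimension, and if $G_1, G_2 \subset H$ are $A$-submodules of $F$, then by proposition \ref{somme} the sum $G_1 + G_2$ is again a $q$-variety; it is stable under every $\Phi_a$ since $\Phi_a(G_1+G_2) = \Phi_a(G_1)+\Phi_a(G_2) \subset G_1+G_2$, and it is contained in $H$ because $H$ is an $\Fq$-vector space. By maximality, any $A$-submodule $L$ with $G \subset L \subset H$ satisfies $\dim L = \dim G$, and hence $L/G$ is finite by theorem \ref{rang}.

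Next, pass to the quotient $A$-module $\overline{F} = F/G$, and write $\overline{H}$, $\overline{\Gamma}$ for the images of $H$ and $\Gamma$. Because $G \subset H$, the kernel of the restricted projection $H \cap \Gamma \longrightarrow \overline{F}$ equals $G \cap \Gamma$, so the quotient $(H \cap \Gamma)/(G \cap \Gamma)$ embeds as an $\Fq$-vector space into $\overline{H} \cap \overline{\Gamma}$. It therefore suffices to show that $\overline{H} \cap \overline{\Gamma}$ is finite. Two structural facts are in hand. By the maximality of $G$, any $A$-submodule of $\overline{F}$ contained in $\overline{H}$ is finite: pulling back via the projection gives an $A$-submodule $L$ of $F$ with $G \subset L \subset H$, and we have just seen that $L/G$ must then be finite. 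Moreover, $\overline{\Gamma}$ is a finitely generated $A$-submodule of $\overline{F}$ since it is the image of the finitely generated module $\Gamma$.

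The remaining statement is the genuine Mordell--Lang/Faltings analogue in this context: if $V$ is a sub-$q$-variety of an $A$-module containing no infinite $A$-submodule and $\Delta$ is a finitely generated $A$-submodule, then $V \cap \Delta$ is finite. This is the main obstacle, and it lies well beyond the elementary techniques developed earlier in the paper; indeed it is essentially a special case of Denis's conjectures \cite{denis}. Any reasonable attack would seem to require either the construction of a height machine on $A$-modules together with Vojta-type diophantine inequalities, or $t$-motivic and model-theoretic methods adapted to positive characteristic. Short of such machinery, one can hope to verify the conjecture directly only in special cases, for example when $\dim H \leq 1$ (where one can argue via torsion and theorem \ref{rangconstant}), or when $(F,\Phi)$ splits as a direct sum of Drinfeld modules for which the relevant diophantine input has already been worked out.
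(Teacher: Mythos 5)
The statement you were asked about is not a theorem of the paper but Conjecture \ref{faltconj}: the paper offers no proof of it, only the observation that it follows from Conjecture \ref{MLconj}, an equivalence with the ``sufficiently generic'' form (Conjecture \ref{genfaltconj}) via the maximal irreducible $A$-submodule $G_{\max}$, and citations of known special cases (Ghioca, Scanlon). Your proposal, by your own admission, proves nothing beyond that same reduction. The construction of a maximal $A$-submodule $G\subset H$ (via stability of sums under $\Phi_a$ and Proposition \ref{somme}), the passage to $F/G$, and the embedding of $(H\cap\Gamma)/(G\cap\Gamma)$ into $\Pi(H)\cap\Pi(\Gamma)$ are all correct and closely parallel the paper's proposition on $G_{\max}$ and its proof that the generic conjectures imply the general ones; your use of dimension maximality plus Theorem \ref{rang} in place of irreducibility is a harmless variant.

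The genuine gap is the step you label ``the remaining statement'': that if $\overline{H}$ contains no infinite $A$-submodule and $\overline{\Gamma}$ is a finitely generated $A$-module, then $\overline{H}\cap\overline{\Gamma}$ is finite. That is exactly Conjecture \ref{genfaltconj}, i.e.\ the entire content of the statement you were supposed to establish; everything you actually prove is the (already known) equivalence of the two formulations. No argument in the paper --- dimension theory, the tangent space, Theorem \ref{rangconstant} on torsion --- supplies this finiteness, and your suggested fallbacks (height machinery, $t$-motivic or model-theoretic methods, or restricting to direct sums of Drinfeld modules) are appeals to external results covering only special cases, not a proof. So the proposal should be read as a correct preliminary reduction plus an honest acknowledgement that the conjecture itself remains open, which is consistent with the paper but is not a proof of the statement.
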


This conjecture is obviously implied by the following one, which is an analogue of Mordell-Lang conjecture.

\begin{conjecture}\label{MLconj}
Let $\overline{\Gamma} = \{x \in F \mid \exists\ a\not = 0 \in A \text{ with }
\Phi_a(x) \in \Gamma\}$
There exists $G \subset H$ an $A$-module such that
$G\cap \overline{\Gamma}$ has finite index in $H\cap \overline{\Gamma}$.
\end{conjecture}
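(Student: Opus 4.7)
The plan is to reduce the statement to a minimal case that lies at the heart of the conjecture, and to isolate precisely the Diophantine input that is required.

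First I would perform some standard reductions. Since $H/\mathring{H}$ is finite by Lemma~\ref{compo}, the quotient $(H\cap\overline{\Gamma})/(\mathring{H}\cap\overline{\Gamma})$ is also finite, so it suffices to prove the statement with $\mathring{H}$ in place of $H$; thus we may assume $H$ is irreducible. If in addition $H$ is stable under $\Phi_a$ for every $a\in A$, then $G=H$ works; otherwise $\jac_\Phi(H)\supsetneq H$. Because any $A$-submodule of $H$ is automatically an $A$-submodule of $\jac_\Phi(H)$, we may replace $F$ by $\jac_\Phi(H)$, which is irreducible, and assume from now on that $F=\jac_\Phi(H)$ is irreducible.

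Next I would proceed by induction on $\dim F$ (Theorem~\ref{dimen}); the case $\dim F=0$ is vacuous. For the inductive step, suppose $F$ contains a proper $A$-submodule $N\subsetneq F$, so $\dim N<\dim F$. If $\Gamma\subset N$, one uses Theorem~\ref{rangconstant} applied to the quotient $A$-module $F/N$ (Proposition~\ref{quotient}) to show that $\overline{\Gamma}\subset N$ up to a finite contribution controlled by the constant $c$ of that theorem, and then applies the induction hypothesis to the triple $(N,H\cap N,\Gamma)$. If instead $\Gamma\not\subset N$, one projects along $\Pi\colon F\to F/N$, observes that $\Pi(\overline{\Gamma})$ is the $A$-divisibility closure of $\Pi(\Gamma)$ in $F/N$, applies the induction hypothesis to $(F/N,\Pi(H),\Pi(\Gamma))$, and lifts the resulting $A$-submodule of $\Pi(H)$ back to $F$ using Proposition~\ref{surjec} and Lemma~\ref{torexact}, absorbing a further finite-index correction.

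The main obstacle, and the reason the statement is phrased as a conjecture rather than a theorem, is the base case in which $F=\jac_\Phi(H)$ admits no proper nontrivial $A$-submodule that meaningfully interacts with $\Gamma$. Here one must control $H\cap\overline{\Gamma}$ intrinsically: one must show that this subset of $H$ cannot be Zariski-dense in $H$ unless some honest $A$-submodule of $H$ forces it to be. This is the precise analogue of the Mordell--Lang theorem for $A$-modules, and all known proofs in the Drinfeld or $T$-module setting (Denis, Ghioca--Tucker, Scanlon, and others) require substantial machinery beyond the algebraic geometry of $q$-varieties: Weil heights on $T$-modules together with Vojta-type inequalities, or model-theoretic dichotomy results in the style of Hrushovski. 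The framework of this paper supplies only the reduction scheme; the minimal case is the true content of the conjecture.
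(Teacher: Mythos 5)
The statement you are asked about is a \emph{conjecture} in the paper: the author does not prove it, and indeed cannot with the tools of the paper. The only thing the paper does with it is a reduction: it introduces $G_{\max}$, the maximal irreducible $A$-submodule of $H$ (Proposition on $G_{\max}$), and shows that the ``sufficiently generic'' form (Conjecture \ref{genMLconj}) implies Conjecture \ref{MLconj} by projecting along $\Pi : F \longrightarrow F/G_{\max}$ and using $\Pi(\overline{\Gamma})\subset\overline{\Pi(\Gamma)}$. Your proposal is of the same nature -- reductions plus an honest admission that the core Diophantine statement is open and needs heights/Vojta or model-theoretic input -- and that admission is exactly right; so there is no mismatch with the paper on the main point, since neither text contains a proof.

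However, your intermediate reductions are not sound as stated, and the flaw is worth naming. In the inductive step with a proper $A$-submodule $N\subsetneq F$ and $\Gamma\subset N$, a point $x\in\overline{\Gamma}$ only satisfies $\Phi_a(x)\in N$ for \emph{some} $a\neq 0$ depending on $x$, i.e.\ $\Pi(x)$ lies in the full torsion submodule $\tor(F/N)=\bigcup_{a\neq 0}\tor(a,F/N)$. Theorem \ref{rangconstant} controls $\tor(a,F/N)$ for each fixed $a$, but the union over all $a$ is infinite (already for the Carlitz module), so ``$\overline{\Gamma}\subset N$ up to a finite contribution controlled by $c$'' is false; that case is precisely the Manin--Mumford situation (Conjecture \ref{MMconj}) and cannot be absorbed as a finite correction. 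Similarly, in the case $\Gamma\not\subset N$ you only have the inclusion $\Pi(\overline{\Gamma})\subset\overline{\Pi(\Gamma)}$, not equality, and lifting an $A$-submodule of $\Pi(H)$ back along $\Pi$ ``up to finite index'' is not justified: the fibres are cosets of $N$ and their intersections with $H$ can be infinite, so Proposition \ref{surjec} and Lemma \ref{torexact} do not give what you need. If you want a reduction consistent with the paper, the cleaner route is the author's: pass to $G_{\max}\subset H$ and the quotient $F/G_{\max}$, where the statement becomes the sufficiently generic Conjecture \ref{genMLconj}, and leave it there as the genuinely open content.
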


A special case of the previous conjecture is $\Gamma =  \{0\}$. It is an analogue of the
Manin-Mumford conjecture. In that case, $\overline{\Gamma}$ is just 
the set of all torsion points and is denoted by $\tor(F)$.

\begin{conjecture}\label{MMconj}
There exists $G \subset H$ an $A$-module such that
$G\cap \tor(F)$ has finite index in $H\cap \tor(F)$.
\end{conjecture}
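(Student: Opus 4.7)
The plan is to isolate the largest $A$-submodule $G$ of $F$ contained in $H$, reduce via the quotient $F/G$ to a Manin--Mumford-type statement for $q$-varieties containing no non-zero $A$-submodule, and then attack the reduced statement through the inverse system of $\pi$-power torsion provided by Proposition \ref{tatemodule}.

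First I would show that the family of $A$-submodules of $F$ contained in $H$ admits a unique maximum $G$. Since $H$ is an $\Fq$-vector space and each $\Phi_a$ is $\Fq$-linear, the sum of two such submodules is again one, so the family is filtered under sums. Dimensions are bounded by $\dim H$, and by Lemma \ref{compo} any ascending chain must eventually have a fixed irreducible component; noetherianity of $\kt$-submodules of $\lan$ via Theorem \ref{bijec} and Lemma \ref{null} then bounds the finite quotient part, producing~$G$.

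Next I would pass to the quotient $A$-module $F/G$ with projection $\Pi$. The image $H' := \Pi(H)$ is a $q$-variety by Theorem \ref{direct}, and by the maximality of $G$ it contains no non-zero $A$-submodule of $F/G$. By Lemma \ref{torexact} there exists a non-zero $c \in A$ such that for every $a$ coprime to $c$ the sequence
$$0 \longrightarrow \tor(a,G) \longrightarrow \tor(a,F) \longrightarrow \tor(a,F/G) \longrightarrow 0$$
is exact. Taking unions over all such $a$ and using Theorem \ref{fintor} to handle the finitely many bad primes, one sees that finite index of $G \cap \tor(F)$ in $H \cap \tor(F)$ is equivalent to finiteness of $H' \cap \tor(F/G)$. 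This reduces the conjecture to the case $G = \{0\}$.

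The main obstacle is the reduced Manin--Mumford claim: if a $q$-variety $H' \subset F'$ contains no non-zero $A$-submodule of $F'$, then $H' \cap \tor(F')$ must be finite. A natural line of attack is to fix a prime $\pi \in A$ coprime to the exceptional $c$ of Theorem \ref{rangconstant}, so that $\tor(\pi^n, F')$ is free of rank $r(F')$ over $A/\pi^n A$ by Proposition \ref{tatemodule}. One would then hope to show that the smallest $q$-variety $V \subset H'$ containing all of $H' \cap \tor(F')$ is itself $A$-stable -- equivalently, that the Zariski closure of an infinite set of torsion points in $F'$ contains a non-zero $A$-submodule -- by combining the differential identity $d(\Phi_\pi) = \delta(\pi)\Id_{T(F')}$ with a rigidity argument on the $\pi$-adic Tate module $\varprojlim_n \tor(\pi^n, F')$ and varying $\pi$. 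Since any non-zero torsion point $x\in H'$ generates a finite $A$-submodule $Ax$ of $F'$ which by hypothesis cannot lie in $H'$, the argument must track which coordinates of $x$ are forced out of $H'$ under iteration of $\Phi_\pi$; making this precise is the heart of the matter and is of the same depth as the classical Manin--Mumford conjecture, presumably requiring Galois-equivariance input beyond the material developed in the present paper.
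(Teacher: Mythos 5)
The statement you were asked about is labelled a \emph{Conjecture} in the paper (Conjecture \ref{MMconj}), and the paper offers no proof of it: the only thing proved in that section is the equivalence of Conjectures \ref{faltconj}, \ref{MLconj}, \ref{MMconj} with their ``sufficiently generic'' counterparts \ref{genfaltconj}, \ref{genMLconj}, \ref{genMMconj}, via quotienting by the maximal irreducible $A$-module $G_{\max}\subset H$. The known cases the paper cites (Scanlon for the torsion of a Drinfeld module, Ghioca for Mordell--Lang) require substantial input --- model theory or Galois-theoretic equidistribution --- that is entirely outside the toolkit developed here. So no complete proof exists to compare yours against, and your proposal, as you yourself acknowledge in the last paragraph, is not a proof.

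That said, the first two steps of your plan are sound and essentially reproduce what the paper actually does. Your construction of a maximal $A$-submodule $G$ of $F$ contained in $H$ is the paper's $G_{\max}$ (the paper takes the maximal \emph{irreducible} such submodule, using that a sum of irreducible $A$-modules is an irreducible $A$-module by Proposition \ref{sommeimageirred}; the two choices differ only by finite index, which is harmless for the statement at hand). Your passage to $F/G$ and the use of Lemma \ref{torexact} to compare $\tor(a,F)$ with $\tor(a,F/G)$ is the same reduction the paper carries out in its proof that Conjecture \ref{genMLconj} implies Conjecture \ref{MLconj}, specialized to $\Gamma=\{0\}$. One small caution: Lemma \ref{torexact} only controls $a$ coprime to some $c$, and the finitely many excluded prime powers contribute torsion subgroups that need not be finite when $a\in\ker\delta$ (Theorem \ref{fintor} requires $a\notin\ker\delta$), so in non-generic characteristic the bookkeeping of $\tor(F)$ versus $\bigcup_{(a,c)=1}\tor(a,F)$ deserves more care than ``handle the finitely many bad primes.'' The genuine gap is of course the third step: the claim that a sufficiently generic $q$-variety meets $\tor(F')$ in a finite set is exactly Conjecture \ref{genMMconj}, and the ``rigidity argument on the Tate module'' you gesture at is not something the differential identity $d(\Phi_\pi)=\delta(\pi)\Id_{T(F')}$ alone can deliver --- one needs, at minimum, control of the Galois action on $\varprojlim_n\tor(\pi^n,F')$, which is the content of Scanlon's and Ghioca's work in the Drinfeld-module case. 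You correctly identified where the real difficulty lies; just be aware that what you wrote is a reduction to the open problem, not a solution of it.
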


The previous conjectures can be simplified using the following property.

\begin{proposition}
Let $(F,\Phi)$ be an $A$-module 
and $H\subset F$ be a $q$-variety. Then there exists an irreducible $A$-module $G_{\max} \subset H$ such that
for any irreducible $A$-module $G\subset H$, we have $G\subset G_{\max}$.
\end{proposition}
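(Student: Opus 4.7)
My plan is to produce $G_{\max}$ as an irreducible $A$-submodule of $H$ of maximal dimension, and then show that every other irreducible $A$-submodule is absorbed into it by taking sums.

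First I would observe that the collection $\mathcal{S}$ of irreducible $A$-modules contained in $H$ is nonempty, since $\{0\}\in\mathcal{S}$, and that every element of $\mathcal{S}$ has dimension bounded by $\dim H\leq n$. Hence I can choose $G_{\max}\in\mathcal{S}$ of maximal dimension.

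Next, given any $G\in\mathcal{S}$, the key step is to analyze $G+G_{\max}$. It is a $q$-variety by Proposition \ref{somme} and it is contained in $H$ because $H$ is an $\Fq$-vector space containing both summands. It is irreducible by Proposition \ref{sommeimageirred}(2). Finally it is stable under every $\Phi_a$, since $\Phi_a(G+G_{\max})\subset \Phi_a(G)+\Phi_a(G_{\max})\subset G+G_{\max}$, so by the first part of the proposition on sub-$A$-modules in Section 6 it is itself an $A$-module. Therefore $G+G_{\max}\in\mathcal{S}$ and contains $G_{\max}$, so by maximality $\dim(G+G_{\max})=\dim G_{\max}$.

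Then I would argue that $G_{\max}=G+G_{\max}$, which of course forces $G\subset G_{\max}$. Suppose otherwise, so the inclusion $G_{\max}\subsetneq G+G_{\max}$ is strict. Applying Theorem \ref{rang} to the canonical surjection $G+G_{\max}\longrightarrow (G+G_{\max})/G_{\max}$ gives
\[
\dim\bigl((G+G_{\max})/G_{\max}\bigr)=\dim(G+G_{\max})-\dim G_{\max}=0.
\]
A $q$-variety of dimension zero has trivial irreducible component by Lemma \ref{compo}, hence is finite. Thus $G_{\max}$ is a sub-$q$-variety of finite index in the irreducible $q$-variety $G+G_{\max}$, so by Definition \ref{irred} we must have $G_{\max}=G+G_{\max}$, contradicting strictness.

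I do not expect any serious obstacle: the statement is essentially that the ordered set $\mathcal{S}$ has a top element, and every ingredient (sum of irreducibles is irreducible, sums of $A$-modules are $A$-modules, the dimension formula, the irreducibility criterion) has already been established. The only subtle point is checking that strict inclusion of irreducibles of equal dimension is impossible, which is where Theorem \ref{rang} together with the definition of irreducibility do the work.
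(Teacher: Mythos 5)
Your proof is correct and takes essentially the same route as the paper: choose an irreducible $A$-submodule $G_{\max}\subset H$ of maximal dimension and absorb any other irreducible $A$-submodule $G$ via the sum $G+G_{\max}$, which is an irreducible $A$-module by Propositions \ref{somme} and \ref{sommeimageirred}. The only difference is that you spell out, using Theorem \ref{rang}, Lemma \ref{compo} and Definition \ref{irred}, why $\dim(G+G_{\max})=\dim G_{\max}$ forces $G+G_{\max}=G_{\max}$, a step the paper leaves implicit.
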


\begin{proof}
Let $G_0\subset H$ be an irreducible $A$-module with maximal dimension and 
$G\subset H$ be an irreducible $A$-module. Then $G_0+G$ is also an irreducible $A$-module by proposition
\ref{sommeimageirred}. By maximality of the dimension, $G_0+G=G_0$, hence $G\subset G_0$.
\end{proof}

As in \cite{ghioca}, we say that $H$ is sufficiently generic if $G_{\max} = \{0\}$. 
We now rewrite our conjectures with this extra condition.

Suppose that $H\subset F$ is a sufficiently generic $q$-variety. Then

\begin{conjecture}\label{genfaltconj}
$H\cap \Gamma$ is finite.
\end{conjecture}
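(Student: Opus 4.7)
Since the statement is presented as a conjecture, what follows is a plan rather than a proof; the strategy reduces the problem to a Mordell--Lang type assertion for $A$-modules, which is the genuinely difficult part.

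First I would reduce to the case $F = K^n$. The irreducible component $\mathring{F}$ is an $A$-submodule of finite index in $F$, so $H \cap \mathring{F}$ differs from $H$ by a finite set, remains sufficiently generic in $\mathring{F}$ (any irreducible $A$-submodule of $H \cap \mathring{F}$ is automatically such a submodule of $H$), and $\Gamma \cap \mathring{F}$ is still finitely generated over $A$ after multiplying each $x_i$ by any fixed $a \in A \setminus \{0\}$ with $\Phi_a(F/\mathring{F}) = 0$.

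Next I would induct on $r$. The case $r = 0$ (so $\Gamma$ is torsion) is the generic form of Conjecture~\ref{MMconj}: using Theorem~\ref{rangconstant}, $\tor(a,F) \simeq (A/aA)^{r(F)}$ for $a$ prime to a fixed $c$, and I would attempt a degree comparison to force $H \cap \tor(a,F)$ to be finite uniformly in $a$. For $r \geq 1$, I would consider the quotient $\pi : F \longrightarrow F/\jac_\Phi(\Fq x_r)$; its image $\pi(H)$ is a $q$-variety in a smaller $A$-module and remains sufficiently generic, so the induction hypothesis controls $\pi(H) \cap \pi(\Gamma)$, after which one reduces to the rank-one problem on each fibre.

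The crux is therefore the case $\Gamma = Ax$. Let $J = \jac_\Phi(\Fq x)$, which is an irreducible $A$-submodule of $F$ by Definition~\ref{jacob}. If $H \cap Ax$ is infinite, its closure $\overline{H \cap Ax} \subset H \cap J$ is a non-trivial $q$-variety by Theorem~\ref{bijec}, and one would like to show that it contains a non-trivial irreducible $A$-submodule of $J$, contradicting the sufficient genericness of $H$. Proposition~\ref{picard} reduces this to controlling which infinite $\Fq$-subspaces of $Ax$ can generate proper $A$-submodules of $J$. The main obstacle is precisely this last step: it is a Mordell--Lang statement for $A$-modules in characteristic $p$, where Frobenius-isotrivial phenomena allow infinite $\Fq$-subspaces of an orbit $Ax$ to lie in proper $q$-subvarieties of $J$. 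Such analogues are currently known only under restrictive hypotheses (Scanlon, Ghioca, and collaborators, in the Drinfeld-module setting), and lifting those results to the $A$-module framework developed here is the real content of the conjecture; the reductions above would be routine.
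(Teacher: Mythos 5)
You are right that no proof is possible to check here: in the paper this statement is an open conjecture (the $q$-variety analogue of Faltings' theorem in the ``sufficiently generic'' case). The paper supplies no argument for it; it only proves the equivalence of the generic and non-generic formulations (and in the text only the implication from Conjecture \ref{genMLconj} to Conjecture \ref{MLconj}, by quotienting by $G_{\max}$), and cites Ghioca and Scanlon for special cases. So your decision to present a reduction plan rather than a proof is the honest one, and your identification of the crux --- a positive-characteristic Mordell--Lang statement for $A$-modules, where Frobenius-isotrivial subgroups obstruct the naive argument --- matches exactly where the paper leaves the problem.

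Two of your intermediate reductions, however, have concrete gaps. First, in the induction on $r$ you quotient by $J=\jac_\Phi(\Fq x_r)$ and assert that $\pi(H)$ ``remains sufficiently generic''; this is unjustified, because $J$ is in general \emph{not} contained in $H$, and the image of $H$ in $F/J$ may well contain a nontrivial irreducible $A$-submodule even though $H$ contains none. The paper's own equivalence argument only ever quotients by $G_{\max}\subset H$, which is precisely why genericity survives there; your step would need a separate argument (or a restructured induction in the spirit of the classical Mordell--Lang reductions, where one works with cosets of submodules rather than with genericity). Second, your base case conflates the conjectures: for Conjecture \ref{genfaltconj} the module $\Gamma$ with $r=0$ is $\{0\}$, so that case is trivial, whereas the statement about $\tor(a,F)$ you invoke via Theorem \ref{rangconstant} belongs to the Manin--Mumford analogue (Conjecture \ref{genMMconj}), which is itself open beyond the Drinfeld-module setting and cannot be used as an established input. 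With these caveats, your overall assessment --- that the reductions are the routine part and the rank-one Mordell--Lang step is the real content --- is consistent with the status of the problem in the paper.
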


\begin{conjecture}\label{genMLconj}
$H\cap \overline{\Gamma}$ is finite.
\end{conjecture}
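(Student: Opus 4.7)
The plan is to adapt to the $A$-module setting the height-theoretic strategy used in the classical Mordell--Lang problem and its Drinfeld-module analogues due to L.~Denis, D.~Ghioca and others. First I would reduce to the case where $F$ is irreducible: by Proposition~\ref{sanstors} we may then assume $F \simeq K^m$, and because $\mathring F$ has finite index in $F$ the intersection $H\cap\overline{\Gamma}$ differs only by a finite set from its analogue for $(\mathring F,\Phi|_{\mathring F})$, with $\Gamma$ replaced by a finitely generated $A$-submodule of $\mathring F$. One may also assume $H\subset \mathring F$, since $H\cap \mathring F$ has finite index in $H$.

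Next I would construct a canonical height $\hat h$ on $F(\overline{\Fq(T)})$ by a Tate-style limit along the iterates of $\Phi_T$, in the spirit of the N\'eron--Tate height. Starting from a naive Weil height $h$ coming from an embedding $F\hookrightarrow K^n$, one expects
\[
\hat h(x) \;=\; \lim_{n\to\infty}\frac{h(\Phi_{T^n}(x))}{c^n}
\]
to exist for a suitable $c>1$ (related to the $\tau$-degree of $\Phi_T$) and to satisfy the usual package: $\hat h(\Phi_a(x))$ grows polynomially in $a$, $\hat h$ is a positive quadratic form up to bounded error, Northcott's finiteness holds over any finitely generated subfield of $K$, and $\hat h(x)=0$ if and only if $x\in \tor(F)$. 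The key input---that $d(\Phi_a)=\delta(a)\Id$ is scalar---is exactly the hypothesis imposed in Definition~\ref{Amodule}, so the construction of $\hat h$ for $t$-modules already in the literature should transfer.

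With $\hat h$ in hand I would show that $\overline{\Gamma}$ lies at bounded height: if $\Phi_a(x)\in\Gamma=Ax_1+\cdots+Ax_r$ with $a\neq 0$, writing $\Phi_a(x)=\sum \Phi_{b_i}(x_i)$ and applying $\hat h$ yields an inequality that forces $\hat h(x)$ to be bounded, uniformly in $a$, by a constant depending only on $x_1,\dots,x_r$. Combined with Northcott, the points of $\overline{\Gamma}$ defined over any fixed finitely generated subfield of $K$ form a finite set of bounded height; it would then remain to show, via a specialisation argument, that infinitely many points of $H\cap\overline{\Gamma}$ may be assumed defined over such a subfield.

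The main obstacle will be the final step, which is really a Bogomolov-type statement: for $H$ sufficiently generic (i.e.\ $G_{\max}=\{0\}$), the set $\{x\in H : \hat h(x)<\varepsilon\}$ is finite for some $\varepsilon>0$. This is an equidistribution assertion that is open in the generality of arbitrary $t$-modules, and it is what prevents the claim from being a theorem in this paper. For the weaker Manin--Mumford case $\Gamma=\{0\}$ of Conjecture~\ref{MMconj} I would try to bypass Bogomolov using the explicit torsion structure: Theorem~\ref{rangconstant} gives $\tor(\pi^n,F)\simeq (A/\pi^n A)^{r(F)}$ for all primes $\pi$ outside a finite set, and a Galois-equidistribution argument on these torsion subgroups should show that an infinite intersection $H\cap\tor(F)$ produces, after averaging Galois orbits, a non-zero $A$-submodule contained in $H$, contradicting sufficient generic-ness.
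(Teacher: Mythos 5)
There is nothing to compare your proposal against: the statement you were asked about is Conjecture~\ref{genMLconj}, which the paper does \emph{not} prove. It is stated as an open analogue of the Mordell--Lang conjecture, and the only thing the paper proves in its vicinity is the equivalence of the ``sufficiently generic'' formulations with the general ones (the displayed argument shows Conjecture~\ref{genMLconj} implies Conjecture~\ref{MLconj}); for actual results it merely cites Ghioca and Scanlon for special choices of $F$. Your text is consistent with this in spirit --- you yourself concede that the Bogomolov-type finiteness step ``is what prevents the claim from being a theorem in this paper'' --- but that concession means the proposal is a research plan, not a proof, and it should not be presented as one.

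Beyond that structural point, two concrete steps of the sketch would fail as written. First, the claim that $\overline{\Gamma}$ has bounded canonical height is false: if $\Phi_a(x)=\sum_i\Phi_{b_i}(x_i)$, then $\hat h(x)$ is controlled by $\hat h\bigl(\sum_i\Phi_{b_i}(x_i)\bigr)$ divided by the scaling factor attached to $a$, and since the degrees of the $b_i$ are unrelated to $\deg a$, points of the division hull can have arbitrarily large height (already for a rank-one Drinfeld module take $\Phi_T(x)=\Phi_{T^N}(x_1)$ with $N$ large). The actual treatments of division hulls in Denis's and Ghioca's work do not proceed by bounding heights on $\overline{\Gamma}$. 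Second, the canonical height here is not ``a positive quadratic form up to bounded error''; it satisfies a scaling relation of the type $\hat h(\Phi_a(x))=q^{r\deg a}\,\hat h(x)$, and Northcott-type finiteness needs $F$, $\Phi$, $H$ and the $x_i$ to be defined over a finitely generated field, which your specialisation step would have to justify rather than assume. Finally, the proposed Manin--Mumford ``bypass'' via Theorem~\ref{rangconstant} and Galois equidistribution is not a shortcut: that statement is precisely Scanlon's theorem, known only for restricted classes of $F$, so it cannot be invoked for a general $A$-module.
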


\begin{conjecture}\label{genMMconj}
$H\cap \tor(F)$ is finite.
\end{conjecture}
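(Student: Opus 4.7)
The plan is to reduce to $F$ irreducible and then show that the $q$-Zariski closure of $H \cap \tor(F)$ is itself an $A$-submodule of $F$, so that sufficient genericity forces it to be trivial.

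For the reduction, note that the irreducible component $\mathring{F}$ is an $A$-submodule of $F$ with $F/\mathring{F}$ finite, so $H = \bigcup_i (h_i + (H \cap \mathring{F}))$ decomposes as a finite union of cosets of $H \cap \mathring{F}$, and each $\tor(F) \cap (h_i + H \cap \mathring{F})$ is either empty or a translate of $\tor(\mathring{F}) \cap (H \cap \mathring{F})$ by a torsion representative. The $q$-variety $H \cap \mathring{F}$ remains sufficiently generic inside $\mathring{F}$, for any irreducible $A$-submodule of $\mathring{F}$ contained in it would be an irreducible $A$-submodule of $F$ contained in $H$, hence zero. Thus we may assume $F = K^n$ irreducible. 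Now let $Z$ be the smallest $q$-variety in $F$ containing $H \cap \tor(F)$ (Definition \ref{adher}); then $Z \subset H$. If $Z$ is an $A$-submodule of $F$, its irreducible component $\mathring{Z}$ is an irreducible $A$-submodule of $F$ inside $H$, so $\mathring{Z} \subset G_{\max} = \{0\}$, whence $Z$ is finite and the conjecture follows.

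It therefore suffices to show $\Phi_a(Z) \subset Z$ for every $a \in A$. To this end I would try a Galois descent: fix a finitely generated subfield $k \subset K$ over which $(F, \Phi, H)$ is defined, and set $\Gamma := \mathrm{Gal}(k^{\mathrm{sep}}/k)$. For every $a \in A$ prime to the conductor $c$ of Theorem \ref{rangconstant}, $\Gamma$ acts $A$-linearly on the free module $\tor(a, F) = (A/aA)^{r(F)}$, preserving $H \cap \tor(a, F)$ and hence stabilising $Z$. The key input I would need is an analog of Serre's open image theorem in this setting: for all but finitely many primes $\pi \in A$, the image of $\Gamma$ in $\mathrm{Aut}_A(\tor(\pi, F))$ should be large enough that the $\Gamma$-orbit of any nonzero $x \in \tor(\pi, F)$ $\Fq$-spans the cyclic $A$-module $Ax$. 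Granted this, $Ax \subset Z$ for every nonzero $x \in H \cap \tor(\pi, F)$; the $q$-variety $\Phi_b^{-1}(Z)$ then contains $H \cap \tor(F)$ and therefore contains $Z$, so $\Phi_b(Z) \subset Z$ for every $b \in A$.

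The principal obstacle is obtaining the large Galois image input above. For rank-one Drinfeld modules it is classical and essentially trivial; for higher-rank pure Drinfeld modules it is known; but for the general $A$-modules on $q$-varieties of this paper no such statement appears to be in the literature, and its development would require a careful analysis of the $\delta$-characteristic and of a suitable notion of isogeny class for these objects. Absent this, a fallback would be a model-theoretic approach using a Zilber-style dichotomy in a suitable difference structure, but this too appears to require substantial machinery not developed here.
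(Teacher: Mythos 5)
There is a fundamental mismatch here: the statement you were asked to prove is Conjecture \ref{genMMconj}, which the paper states as an open analogue of the Manin--Mumford conjecture (following Denis) and does \emph{not} prove; the paper only records that special cases are known (Scanlon for the torsion of Drinfeld modules, Ghioca for a Mordell--Lang statement in that setting). So there is no proof in the paper to compare yours against, and any complete argument would be a genuinely new theorem rather than a reconstruction.

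Your proposal, as you yourself acknowledge, does not close the gap. The outer scaffolding is sensible: reducing to $F=\mathring{F}$ irreducible via the finite quotient $F/\mathring{F}$, taking $Z$ to be the smallest $q$-variety containing $H\cap\tor(F)$ (Definition \ref{adher}), and observing that if $Z$ were $\Phi$-stable then $\mathring{Z}\subset G_{\max}=\{0\}$ forces $Z$ finite -- this is the standard shape of Manin--Mumford arguments. But the entire content lies in the $\Phi$-stability of $Z$, and there your argument rests on an unproved ``large Galois image'' statement for torsion of general $A$-modules on $q$-varieties, a statement not available in this paper or in the literature and plausibly comparable in difficulty to the conjecture itself. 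Even granting that input, the deduction is incomplete: the spanning property is asserted only for $\tor(\pi,F)$ with $\pi$ prime and outside a finite bad set, whereas $H\cap\tor(F)$ contains points of arbitrary order, including $\pi^{m}$-torsion and torsion at the excluded primes (those dividing the conductor $c$ of Theorem \ref{rangconstant} or lying in $\ker\delta$); for such points you have not shown $Ax\subset Z$, so the inclusion $H\cap\tor(F)\subset\Phi_b^{-1}(Z)$, and hence $\Phi_b(Z)\subset Z$, does not follow. There is also an unaddressed descent point: for $\Gamma$ to stabilise $Z$ you need $(F,\Phi,H)$, and hence $Z$, defined over $k$ and the relevant torsion separable over $k$ (true for $a$ prime to $\ker\delta$ since $d(\Phi_a)=\delta(a)\Id$, but this must be said and it again excludes the inseparable torsion). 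In short: the strategy is reasonable, but the key arithmetic input is missing and the statement remains, as in the paper, a conjecture.
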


\begin{proposition}
Conjectures \ref{faltconj}, \ref{MLconj} and \ref{MMconj} are equivalent, respectively, to
conjectures \ref{genfaltconj}, \ref{genMLconj} and \ref{genMMconj}
\end{proposition}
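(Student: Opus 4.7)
The plan is to prove each of the three equivalences by verifying both implications; the direction generic $\Rightarrow$ general is the substantive one and proceeds by a reduction modulo the maximal irreducible $A$-submodule $G_{\max}$ of $H$.

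For the easy direction (general $\Rightarrow$ generic), I would assume $H$ is sufficiently generic, so $G_{\max}=\{0\}$, and let $G\subset H$ be the $A$-module furnished by the general conjecture, whose intersection with $\Gamma$ (respectively $\overline\Gamma$, $\tor(F)$) has finite index in the corresponding intersection with $H$. The irreducible component $\mathring G$ is an irreducible $A$-module in $H$, hence contained in $G_{\max}=\{0\}$; lemma \ref{compo} then makes $G$ itself finite, and the finite-index condition forces $H\cap\Gamma$ (resp.\ $H\cap\overline\Gamma$, $H\cap\tor(F)$) to be finite.

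For the main direction (generic $\Rightarrow$ general), I would start with an arbitrary $H\subset F$, let $G_{\max}$ be its maximal irreducible $A$-submodule supplied by the previous proposition, and take $\Pi:F\to F/G_{\max}$ the canonical $A$-morphism. The crucial observation is that, because $G_{\max}\subset H$ and $H$ is an $\Fq$-vector space, one has $\Pi^{-1}(\Pi(H)) = H + G_{\max} = H$, so the preimage under $\Pi$ of any subset of $\Pi(H) = H/G_{\max}$ already lies inside $H$. Using this, I would show that $\Pi(H)$ is sufficiently generic in $F/G_{\max}$: given a nonzero irreducible $A$-module $\widetilde G\subset\Pi(H)$, the preimage $S:=\Pi^{-1}(\widetilde G)$ is an $A$-submodule of $H$ containing $G_{\max}$, with $S/G_{\max}\simeq\widetilde G$. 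Theorem \ref{rang} gives $\dim S = \dim G_{\max} + \dim\widetilde G$, and by lemma \ref{compo} the irreducible component $\mathring S$ has the same dimension; since $\mathring S$ is an irreducible $A$-module in $H$, maximality of $G_{\max}$ yields $\mathring S\subset G_{\max}$, hence $\dim\widetilde G\le 0$. Proposition \ref{sanstors} then forces $\widetilde G=\{0\}$, a contradiction.

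Having established genericity of $\Pi(H)$, I would apply the corresponding generic conjecture in $F/G_{\max}$ to $\Pi(H)$ with the image module $\Pi(\Gamma) = A\Pi(x_1)+\cdots+A\Pi(x_r)$, respectively $\overline{\Pi(\Gamma)}$, respectively $\tor(F/G_{\max})$, using the trivial inclusions $\Pi(\overline\Gamma)\subset\overline{\Pi(\Gamma)}$ and $\Pi(\tor(F))\subset\tor(F/G_{\max})$ coming from the $A$-linearity of $\Pi$. The conclusion is that $\Pi(H\cap\Gamma)$ and its analogues are finite; since $G_{\max}\subset H$, the kernel of the restricted projection $H\cap\Gamma \to \Pi(H\cap\Gamma)$ equals $G_{\max}\cap\Gamma$, so $G_{\max}\cap\Gamma$ has finite index in $H\cap\Gamma$, and taking $G := G_{\max}$ witnesses conjecture \ref{faltconj}; the same lifting works verbatim for \ref{MLconj} and \ref{MMconj}. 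The main obstacle is the genericity step for $\Pi(H)$, but once the identity $\Pi^{-1}(\Pi(H)) = H$ is noticed, everything reduces to dimension counting via theorem \ref{rang} and the defining property of $G_{\max}$.
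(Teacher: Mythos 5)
Your argument is correct and takes essentially the same route as the paper's: reduce modulo the maximal irreducible $A$-submodule $G_{\max}$ via the quotient morphism $\Pi$, push the intersection forward using $\Pi(\overline{\Gamma})\subset\overline{\Pi(\Gamma)}$ (resp.\ $\Pi(\tor(F))\subset\tor(F/G_{\max})$), and lift the finiteness back through $\ker\Pi=G_{\max}$ to get the finite-index statement with $G=G_{\max}$. The only differences are that you additionally verify, via theorem \ref{rang} and lemma \ref{compo}, that $\Pi(H)$ is sufficiently generic (a point the paper dismisses as clear) and you write out the easy converse direction; both additions are correct.
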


\begin{proof}[Proof of : Conjecture \ref{genMLconj} implies conjecture \ref{MLconj}]
Suppose that conjecture \ref{genMLconj} is true.
Let $H\subset F$ be a $q$-variety. It is clear that $H/G_{\max}$ is a sufficiently generic $q$-variety included
in the $A$-module $F/G_{\max}$. Let $\Pi : F \longrightarrow F/G_{\max}$ be the quotient map. Hence 
$H/G_{\max} = \Pi(H)$ and $F/G_{\max} = \Pi(F)$.
We apply conjecture \ref{genMLconj} to $\Pi(\Gamma)$  : $\Pi(H)\cap \overline{\Pi(\Gamma)}$ is finite.

Furthermore, let $y\in  \Pi(\overline{\Gamma})$, then there exists $x \in F$ and $a\not = 0\in A$ such that
$y = \Pi(x)$ and $\Phi_a(x)\in \Gamma$. It follows that $\Psi_a(y) = \Psi_a(\Pi(x)) = \Pi(\Phi_a(x))\in 
\Pi(\Gamma)$ where $\Psi$ is the $A$-module structure on $\Pi(F)$. Hence, $y\in \overline{\Pi(\Gamma)}$, so
$\Pi(\overline{\Gamma}) \subset \overline{\Pi(\Gamma)}$.

Now, $\Pi(H\cap \overline{\Gamma})\subset \Pi(H)\cap\Pi(\overline{\Gamma})\subset 
\Pi(H)\cap\overline{\Pi(\Gamma)}$. Hence $\Pi(H\cap \overline{\Gamma})$ is finite. 
Since $\ker \Pi = G_{\max}$, we conclude that $G_{\max}\cap\overline{\Gamma}$ 
has finite index in $H\cap \overline{\Gamma}$.
\end{proof}

Some cases of the conjectures are known. For examples, in \cite{ghioca}, D. Ghioca
proved that conjecture \ref{genMLconj}
holds when $F$ is a direct copy of a Drinfeld module and in \cite{scanlon}, T. Scanlon
proved that  conjecture \ref{MMconj}
holds with the same condition of $F$.

\end{document}